\theoremstyle{plain}
\newtheorem{thm}{Theorem}[section]
\newtheorem{lem}[thm]{Lemma}
\newtheorem{cor}[thm]{Corollary}
\newtheorem{prop}[thm]{Proposition}
\theoremstyle{remark}
\newtheorem{rem}{\bf{Remark}}
\numberwithin{equation}{section}
\newcommand{\N}{\mathbb{N}}
\newcommand{\R}{\mathbb{R}}
\newcommand{\T}{\mathbb{T}}
\newcommand{\Z}{\mathbb{Z}}
\newcommand{\cA}{\mathcal{A}}
\newcommand{\cC}{\mathcal{C}}
\newcommand{\AC}{{\rm AC\,}}
\newcommand{\BUC}{{\rm BUC\,}}
\newcommand{\AL}{{\rm AL\,}}
\newcommand{\Lip}{{\rm Lip\,}}
\newcommand{\gam}{\gamma}
\newcommand{\ep}{\varepsilon}
\newcommand{\ol}{\overline}
\begin{document}
\baselineskip=15pt

\title[Quantitative homogenization of convex HJ equations]
{Quantitative homogenization of\\ 
convex Hamilton--Jacobi equations with\\ 
$u/\ep$-periodic Hamiltonians}

\author{Hiroyoshi Mitake}
\address[H. Mitake]{
	Graduate School of Mathematical Sciences,
	University of Tokyo
	3-8-1 Komaba, Meguro-ku, Tokyo, 153-8914, Japan}
\email{mitake@g.ecc.u-tokyo.ac.jp}

\author{Panrui Ni}
\address[P. Ni]{
Department 1: Graduate School of Mathematical Sciences, University of Tokyo, 3-8-1 Komaba, Meguro-ku, Tokyo 153-8914, Japan; Department 2: Shanghai Center for Mathematical Sciences, Fudan University, Shanghai 200438, China}
\email{panruini@g.ecc.u-tokyo.ac.jp}

\author{Hung V. Tran}
\address[H. V. Tran]{
Department of Mathematics, University of Wisconsin Madison, Van Vleck Hall, 480 Lincoln Drive, Madison, Wisconsin 53706, USA}
\email{hung@math.wisc.edu}

\makeatletter
\@namedef{subjclassname@2020}{\textup{2020} Mathematics Subject Classification}
\makeatother

\date{\today}
\keywords{Homogenization; Hamilton--Jacobi equations; convex Hamiltonians; dislocation dynamics; optimal convergence rates}
\subjclass[2020]{
	35B27, 
	35B40, 
        49L25 
}

\begin{abstract}
Here, we study quantitative homogenization of first-order convex Hamilton--Jacobi equations with $(u/\ep)$-periodic Hamiltonians which typically appear in dislocation dynamics. 
Firstly, we establish the optimal convergence rate by using the inherent fundamental solution and the implicit variational principle of Hamilton dynamics with their Hamiltonian depending on the unknown.
Secondly, under additional growth assumptions on the Hamiltonian, we establish global H\"older regularity for both the solutions and the correctors, serving as a notable application of our quantitative homogenization theory.
\end{abstract}

\date{\today}

\maketitle

\subjclass{ }


\section{Introduction}

In this paper, we investigate the homogenization for Hamilton--Jacobi equations depending rapidly and periodically on the unknown of the form: 
\begin{equation}\label{e}
\begin{cases}
u^\varepsilon_t+H(\frac{x}{\varepsilon},\frac{u^\varepsilon}{\varepsilon},Du^\varepsilon)=0
\quad &\text{for} \ x\in\R^n,\ t>0,
\\ 
u^\varepsilon(x,0)=\varphi(x)
\quad &\text{for} \ x\in\R^n, 
\end{cases}
\end{equation}
where 
$\ep>0$, $n\in\N$, and 
$H:\R^n\times\R\times\R^n\to\R$, 
and $\varphi:\R^n\to\R$ are given continuous functions. Here,  
$u^\ep:\R^n\times[0,\infty)\to\R$ is the unknown function. 
\textit{Throughout} the paper, we assume 
\begin{itemize}
\item[(H1)] 
   $H\in C(\R^n\times\R\times \R^n)$, and 
    $(y,r)\mapsto H(y,r,p)$ is $\Z^{n+1}$-periodic for all $p\in \R^n$, that is, 
\[
H(y+z,r+s,p)=H(y,r,p) \quad\text{for all} \  (y,r)\in\R^n\times\R, (z,s)\in\Z^n\times\Z, \ \text{and}  \ p\in\R^n; 
\] 
\item[(H2)] 
there exists a constant $K>0$ such that 
\[
|H(y,r,p)-H(y,s,p)|\le K|r-s|
\quad\text{ for all} \ (y,p)\in\R^n\times\R^n, \text{ and } r,s\in \R;
\] 
\item[(H3)] $H$ is superlinear in $p$, that is, 
\[
\lim_{|p|\to\infty}\left(\inf_{(y,r)\in\R^n\times\R}\frac{H(y,r,p)}{|p|}\right)=\infty; 
\]
\item[(H4)] $p\mapsto H(y,r,p)$ is convex for all $(y,r)\in\R^n\times\R$; 
\item[(H5)] $\varphi\in \Lip(\mathbb R^n)$. 
Here, for a given metric space $X$, we denote by  $\Lip(X)$ the set of Lipschitz continuous functions on $X$.
\end{itemize}
We are always concerned with viscosity solutions,
and the adjective ``viscosity" is often omitted in the paper. 
For each $T\ge 0$, denote by $\AL(\R^n\times [0,T])$ the set of functions $w:\R^n\times [0,T]\to\R$ that grow at most linearly in $x$.
That is, $w\in \AL(\R^n\times [0,T])$ if there exists a constant $C_T=C_T(w)>0$ such that 
\[
|w(x,t)|\leq C_T(1+|x|) \quad \text{ for $(x,t)\in \R^n\times [0,T]$.}
\]
Set
\[
\AL(\R^n\times [0,\infty)) = \left\{w:\R^n\times [0,\infty)\to\R \mid w\in \AL(\R^n\times [0,T]) \text{ for all } T>0 \right\}.
\]
We only care about the unique solution of \eqref{e} in $\AL(\R^n\times [0,\infty))$ (see Lemma \ref{lem:CP} for the corresponding comparison principle). If $\varphi$ is moreover bounded on $\R^n$, we consider the unique solution of \eqref{e} in
\[\BUC(\R^n\times [0,\infty)) = \left\{w:\R^n\times [0,\infty)\to\R \mid w\in \BUC(\R^n\times [0,T]) \text{ for all } T>0 \right\}.\]
Here, for a given metric space $X$, $\BUC(X)$ is the set of bounded and uniformly continuous functions on $X$.

In Appendix \ref{sec:moti}, we provide a derivation of the homogenization problem as a representative example inspired by dislocation dynamics, originally introduced by \cite{IM}.
According to \cite[Proposition 3.2]{CLQY}, the Hamilton-Jacobi equations periodically depending on the unknown function are related to the Sine-Gordon equation. We also refer the reader to related works on $u/\ep$-homogenization of the semilinear heat equation (see \cite{CDN}) and the scalar phase-field equation (see \cite{J}). Another motivation comes from a conjecture of De Giorgi, which concerns the homogenization of linear transport equations. This problem was originally raised in \cite{DG} (see the discussions in Section \ref{further}).

In \cite{IM, B}, under (H1), (H2), the coercivity of $H$ in $p$, and additionally Lipschitz continuity properties of $H$ in $y$ and $p$, that is, 
there exists $\gamma_0>0$ such that 
\begin{equation}\label{ass:IM}
|D_yH(y,r,p)|\le \gamma_0(1+|p|), \quad
|D_pH(y,r,p)|\le \gamma_0
\quad
\text{ for a.e. } (y,r,p)\in\R^n\times\R\times\R^n, 
\end{equation} 
the qualitative homogenization was studied. 
Moreover, in \cite{AP}, 
by adapting the technique introduced in \cite{CDI} to their setting, 
a rate of convergence $O(e^T \ep^{\frac{1}{3}})$ of $u^\ep$ to $u$ on $\R^n\times [0,T]$ was obtained for each $T>0$. 
Of course, this rate depends exponentially on $T$.

\smallskip
A primary goal of this paper is to establish the optimal convergence rate $O(\ep)$ for the homogenization problem \eqref{e} under (H1)--(H5). 
To get this result, we first establish the implicit variational problem shown in Theorem \ref{thm:fundamental-sol} below. It is the first time that the implicit variational principle is established for continuous and convex Lagrangians. An equivalent framework was established in \cite{CCJWY} for continuous and strictly convex Lagrangians. Let $m(t,x,y,c)$ be the function defined in Theorem \ref{thm:fundamental-sol}. Here is our first main result.
\begin{thm}\label{thm1}
Assume {\rm(H1)--(H5)}. 
For $(x,t)\in \R^n\times [0,\infty)$, set
\[u^\ep(x,t):=\inf_{y\in\R^n}\ep m\Big(\frac{t}{\ep},\frac{x}{\ep},\frac{y}{\ep},\frac{\varphi(y)}{\ep}\Big),\]
where $m:[0,\infty)\times\R^n\times\R^n\times\R\to\R$ is the \textit{fundamental solution} {\rm(}or the \textit{distance function}{\rm)} associated with \eqref{e} with $\ep=1$, which is given by Theorem {\rm\ref{thm:fundamental-sol}}. 
Then, $u^\ep$ is the unique solution of \eqref{e} in $\AL(\R^n\times [0,\infty))$. 
We have that the limit $u(x,t):=\lim_{\ep\to 0}u^\ep(x,t)$ exists for all $(x,t)\in\R^n\times[0,\infty)$. 
Moreover, there exists a constant $C>0$ depending only on $H$ and $\|D\varphi\|_{L^\infty(\mathbb R^n)}$ such that, for $\ep\in(0,1)$, 
\begin{equation}\label{eq:rate-thm}
\|u^\ep-u\|_{L^\infty(\mathbb R^n\times[0,\infty))}\le C\ep.
\end{equation}
The rate $O(\ep)$ in \eqref{eq:rate-thm} is optimal. 
\end{thm}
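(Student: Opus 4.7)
The plan is to combine the explicit representation formula with the asymptotic, subadditive, and periodic structure of the fundamental solution $m$ inherited from (H1)--(H4) and the implicit variational principle of Theorem~\ref{thm:fundamental-sol}.

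\emph{Representation and uniform estimates.} The parabolic rescaling $(X,T):=(x/\ep,t/\ep)$ conjugates \eqref{e} to the $\ep=1$ equation with initial datum $\varphi^\ep(X):=\varphi(\ep X)/\ep$, and Theorem~\ref{thm:fundamental-sol} identifies its unique $\AL$ solution as $V^\ep(X,T)=\inf_Y m(T,X,Y,\varphi^\ep(Y))$. Unscaling with $y=\ep Y$ yields the formula stated in the theorem, and uniqueness in $\AL(\R^n\times[0,\infty))$ follows from Lemma~\ref{lem:CP}. Under (H3)--(H5), convex--coercive theory propagates the Lipschitz constant of the datum, so $\|Du^\ep\|_{L^\infty}$ and $\|u^\ep_t\|_{L^\infty}$ are bounded uniformly in $\ep$ in terms of $H$ and $\|D\varphi\|_{L^\infty}$; superlinearity of the dual Lagrangian then confines the near-minimizers in the representation to a bounded set of $(x-y)/t$, which gives the compactness needed below.

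\emph{Identification of the limit and the $O(\ep)$ rate.} The $\Z^{n+1}$-periodicity in (H1) passes to the skew-periodicity
\[
m(T,X+Z,Y+Z,C+S)=m(T,X,Y,C)+S\qquad\text{for all }(Z,S)\in\Z^n\times\Z,
\]
so $\tilde m:=m-C$ descends to a compact quotient once $X-Y$ is fixed. Combined with the dynamic-programming subadditivity supplied by the implicit variational principle and the Lipschitz control of $m$ in its spatial arguments, I expect an enhanced Fekete-type argument to produce a convex superlinear effective Lagrangian $\overline L:\R^n\to\R$ and a constant $C_0$ depending only on $H$ such that
\[
\bigl|\,m(T,X,Y,C)-T\,\overline L\bigl(\tfrac{X-Y}{T}\bigr)-C\,\bigr|\;\le\;C_0\qquad\text{for all }T\ge 1.
\]
Inserting $T=t/\ep$, $X=x/\ep$, $Y=y/\ep$, $C=\varphi(y)/\ep$ and multiplying by $\ep$, the representation becomes $u^\ep(x,t)=\inf_y[\varphi(y)+t\,\overline L((x-y)/t)]+O(\ep)$ uniformly on $\R^n\times[0,\infty)$. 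Hence the pointwise limit $u(x,t):=\inf_y[\varphi(y)+t\,\overline L((x-y)/t)]$ exists and $\|u^\ep-u\|_{L^\infty}\le C_0\ep$; $u$ is in fact the Hopf--Lax function of the effective equation $u_t+\overline H(Du)=0$ with datum $\varphi$, where $\overline H$ is the Legendre dual of $\overline L$. For optimality, an explicit one-dimensional example such as $H(y,r,p)=\tfrac12 p^2+V(r)$ with a non-constant $1$-periodic $V$ and $\varphi\equiv 0$ can be computed directly via the fundamental solution to exhibit a non-removable residual of exact order $\ep$.

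\emph{Main obstacle.} The principal technical difficulty is obtaining the \emph{sharp} $O(1)$ defect in the effective-Lagrangian convergence for $m$, rather than the generic $o(T)$ defect furnished by plain subadditivity. Because (H1) is periodic in the \emph{combined} variable $(y,r)$, one must exploit the coupling between integer shifts in the spatial argument $Y$ and integer shifts in the value argument $C$---exactly the regime where the implicit variational principle of Theorem~\ref{thm:fundamental-sol} is decisive. Concretely, the $O(1)$ bound will be extracted from the uniform Lipschitz control of $m$ on a fundamental domain of the diagonal $(n+1)$-dimensional sublattice inside $\R^{2n+1}$, combined with a careful comparison of near-minimizing curves after coupled integer translations in $(Y,C)$. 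This is where the novelty over the cell-problem method of \cite{AP} (which yields only $O(e^T\ep^{1/3})$) resides, and where the bulk of the technical work of the paper will lie.
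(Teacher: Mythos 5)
Your overall skeleton matches the paper's: represent $u^\ep$ via the rescaled fundamental solution, prove that $\ep m(t/\ep,x/\ep,y/\ep,c/\ep)$ differs from $c+t\overline L((x-y)/t)$ by $O(\ep)$ (with $(x-y)/t$ confined to a bounded set), and then read off the rate from the infimum formula, with optimality coming from the $r$-independent case. But the proposal stops exactly where the proof actually lives, and two of the ingredients you do assert are not available. First, the claim that ``convex--coercive theory propagates the Lipschitz constant of the datum, so $\|Du^\ep\|_{L^\infty}$ and $\|u^\ep_t\|_{L^\infty}$ are bounded uniformly in $\ep$'' is precisely what fails here: since $H$ is not monotone in $r$, the only a priori bound is $|u^\ep_t|\le e^{Kt/\ep}M$, and the paper stresses that equi-Lipschitz continuity of $\{u^\ep\}$ is unknown. (The confinement $|x-y|\le M_0 t$ of minimizing points, which is what you actually need, is instead obtained directly from the superlinearity of $L$ in Proposition \ref{prop:uep=m}, with no Lipschitz bound on $u^\ep$.) Second, the ``dynamic-programming subadditivity supplied by the implicit variational principle'' is not a direct consequence of dynamic programming: the inequality one needs compares $m((\sigma+l)t,0,(\sigma+l)y,(\sigma+l)c)$ with $m(\sigma t,0,\sigma y,\sigma c)+m(lt,0,ly,lc)$, and concatenating minimizers forces integer shifts in both the spatial and the value variables; the $O(1)$ error is extracted by solving the Herglotz ODE $\dot\xi=L(\eta,\xi,\dot\eta)$ along the patched curve, using the ODE comparison principle together with the $\Z$-periodicity of $L$ in $r$, a ``cheap'' time window $[d,d+\tfrac14]$ found on the minimizer, and Lemma \ref{m<dotu} to convert the ODE bound into a bound on $m$ (Lemma \ref{lem:sub}).

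More importantly, approximate subadditivity alone (even with an $O(1)$ defect) only yields the one-sided bound $m(kt,0,ky,kc)\ge k\,\overline m(t,0,y,c)-C$ via Fekete; to get the two-sided $O(1)$ estimate you also need the approximate superadditivity $2m(t,0,y,c)\le m(2t,0,2y,2c)+C$ (Lemma \ref{lem:sup}), which is entirely absent from your plan. That is the genuinely novel step of the paper: one cuts the minimizer of $m(2t,0,2y,2c)$ together with its value curve $w(s)=m(s,\gamma(s),2y,2c)$ using Burago's cutting lemma in the lifted $(x,u)$-space $\R^{n+1}$, periodically reshuffles the pieces using the $\Z^{n+1}$-periodicity, and patches them with the same Herglotz-ODE comparison; only the combination of Lemmas \ref{lem:sub} and \ref{lem:sup} gives Corollary \ref{cor:diff-m}, $|m^\ep-\overline m|\le C\ep$ for $t\ge\ep$, $|x-y|\le M_0t$, which (together with the elementary bound $|u^\ep-u|\le Ct$ for $t\le\ep$) is what produces \eqref{eq:rate-thm}. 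Since your write-up defers exactly this content to an ``enhanced Fekete-type argument'' you ``expect'' to work, the proposal as it stands is a correct roadmap with the central arguments missing, plus the incorrect uniform Lipschitz claim noted above.
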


When $H=H(x,p)$, which is independent of $r$, the optimal convergence rate $O(\ep)$ was proved in \cite{TY} (see also \cite{TY-lecture}). 
Specifically, \cite[Theorem 1.1]{TY} can be considered as a particular case of Theorem \ref{thm1}, and hence, the rate $O(\ep)$ in \eqref{eq:rate-thm} is optimal. 
Since we assume the superlinearity of $H$ in $p$ in (H3), 
our settings are different from those in \cite{IM, B, AP}. 
In particular, $H$ cannot be globally Lipschitz in $p$ as in \eqref{ass:IM}.
Therefore, as far as the authors know, the convergence result in Theorem \ref{thm1} itself is new in the literature. 
We would like to mention that the quantitative analysis for \eqref{e} was addressed as an open problem in the lecture notes by Tran and Yu in \cite[Question 7, Section 8.4]{TY-lecture}. 
In contrast to \cite{AP}, it is surprising that the rate in \eqref{eq:rate-thm} can be obtained independently of time $t\in[0,\infty)$.

\smallskip
 
Quantitative homogenization theory for first-order convex Hamilton–Jacobi equations in the periodic setting has seen significant recent development. This progress was initiated in \cite{MTY, TY}, where the authors made essential use of the representation formula of the solutions from optimal control theory and key properties of minimizing curves.
In \cite{TY}, the authors obtained the subadditivity and superadditivity properties of the metric functions (or the fundamental solutions) associated with the Lagrangians. 
In particular, the method to obtain the superadditivity property is innovative using a ``curve surgery" argument. 
Loosely speaking, the curve surgery argument is 
a combination of three steps: 
(i) cutting the optimal trajectory of the metric function by using 
Burago's cutting lemma 
(see \cite[Lemma 2]{Burago}, and also \cite[Lemma 2.1]{TY}), 
(ii) 
shifting the cut curves periodically, 
and 
(iii) patching them back together in a skillful manner. 
This method is robust, and 
we refer the reader to 
\cite{HJ, HJMT, Hu-Tu-Zhang, MN1, MN2, NT, T} and the references therein for further development in this direction. 

\smallskip

In our problem \eqref{e}, since the Hamiltonian depends on the unknown function periodically, we can expect the implicit variational principle for the value function. 
However, performing a curve surgery argument requires great care.
In \cite{MN1}, a sharp convergence rate $O(\ep^{\frac{1}{2}})$ for 
$u_t^\ep+H(\frac{x}{\ep}, u^\ep, Du^\ep)=0$ was obtained 
under (H1), (H2), (H4), and the coercivity of $H$ in $p$ by using an iteration type argument.
However, this iteration does not work for our current problem because of the rapidly oscillating term $u/\ep$.  
Here, we essentially use the implicit variational principle of the Hamiltonian dynamics, and develop a curve surgery argument for inherent fundamental solutions. 
We begin by cutting curves in the $(x,u)$-space, rather than just in 
$x$-space, using Burago's cutting lemma. 
We then exploit the periodic structure of the Hamiltonian in $(x,u)$, and finally, we use a key idea inspired by the Herglotz variational principle to patch the curves back together.

We also point out several delicate differences and difficulties. 
First of all, we notice that it is not clear if the family $\{u^\ep\}_{\ep>0}$ is equi-Lipschitz continuous even locally in time. 
This is mainly because the Hamiltonian $H=H(y,r,p)$ does not satisfy the non-decreasing property in $r$. First, since $H(y,r,p)$ is periodic in both $y$ and $r$, we can see that
\[M:=\sup_{\substack{(y,r)\in\R^n\times\R,\\ p\in B(0, \|D\varphi\|_{L^\infty(\R^n)})}}|H(y,r,p)|<\infty,\]
and that $\varphi(x)\pm Mt$ are a sub/super solution to \eqref{e}, respectively. 
The trouble is that we cannot use the comparison principle to get the non-expansiveness property.
More precisely, we cannot expect that, for $t,s\geq 0$,
\[
\|u^\ep(\cdot,t+s)-u^\ep(\cdot,t)\|_{L^\infty(\R^n)}\leq \|u^\ep(\cdot,s)-u^\ep(\cdot,0)\|_{L^\infty(\R^n)}\leq Ms.
\] 
We only have
\[
\|u^\ep(\cdot,t+s)-u^\ep(\cdot,t)\|_{L^\infty(\R^n)}\leq  e^{Kt/\ep}\|u^\ep(\cdot,s)-u^\ep(\cdot,0)\|_{L^\infty(\R^n)}\leq e^{Kt/\ep}Ms,
\] 
where $K$ is given by (H2). See \eqref{eks} below. This immediately implies a rough bound 
\[
|u^\ep_t(x,t)| \leq e^{Kt/\ep}M \quad \text{ for a.e. } (x,t)\in \R^n \times [0,\infty).
\]
This rough bound depends on both $\ep$ and $t$ and blows up as $\ep \to 0$. 
Of course, this estimate holds for general Hamiltonian $H(y,r,p)$ which is $K$-Lipschitz continuous in $r$ uniform in $(y,p)\in\R^n\times\R^n$. We refer the reader to \cite[Lemma 2.3]{MN1} for the same estimate for weakly coupled systems of Hamilton--Jacobi equations with a very general coupling, and also \cite[Lemma 2.2 (ii)]{NWY2} for single Hamilton--Jacobi equations on compact manifolds. Therefore, if we only assume the coercivity of $H$, i.e., 
 $\lim_{\left|p\right| \to \infty}\inf_{(y,r) \in \R^n\times\R}H(y,r,p) = \infty$, 
we cannot modify $H$ to have a quadratic growth. 
For this reason, we assume $H$ to satisfy the superlinearity in $p$ as in (H3). 
We refer to \cite{NT} for a similar difficulty when we consider time-dependent Hamilton--Jacobi equations. 

\smallskip
Next, let $\ep v(\frac{x}{\ep},\frac{t}{\ep})=u^\ep(x,t)$ and $y=\frac{x}{\ep}$, $\tau=\frac{t}{\ep}$, then we have 
\[
v_\tau+H(y,v,D_yv)=0.
\]
The Lipschitz estimate of $u^\ep$ is equivalent to the Lipschitz estimate of $v$. 
In \cite[Theorem 1.7 (2)]{NWY}, the authors proved that $\|D_yv(\cdot,\tau)\|_{L^\infty(\R^n)}$ is bounded by a constant depending only on $H$ for all $\tau>1$, where $y$ belongs to a compact manifold without boundary, $H$ is of class $C^3$, $r\mapsto H(y,r,p)$ is periodic, and $p\mapsto H(y,r,p)$ is superlinear and strictly convex. The proof is based on the energy estimate of minimizers of the optimal control formula of $v$. 
See also \cite[Proposition 1.6]{NW} for a related result. 
Since $\varphi \in\Lip(\R^n)$ is not periodic in general, due to the lack of compactness, the arguments in \cite{NWY,NW} cannot be applied in our case here.

\smallskip
Now, it is worth emphasizing that although Theorem \ref{thm1} guarantees the uniform convergence of the solution $u^\ep$ of \eqref{e} 
to the limit function $u$ as $\ep\to0$ under (H1)--(H5), 
it is not clear how the limit function $u$ is determined qualitatively, 
that is, it is not clear which equation $u$ satisfies. 
In the homogenization theory, typically, the limit equation is given by 
the effective Hamiltonian, which is determined through the cell problem. 
Classically, we first establish the existence of solutions to the cell problem by considering the vanishing discount problem, and the effective Hamiltonian is uniquely determined. However, in our problem, this is not possible because of the lack of a priori Lipschitz estimate of the discount approximation. Therefore, we choose another approach. Roughly speaking, we determine the effective Hamiltonian by using the effective metric function. More precisely, let $m(t,x,y,c)$ be the function defined in Theorem \ref{thm:fundamental-sol}. Due to Corollary \ref{cor:m-bar}, we have the effective metric function, that is, 
\[\overline{m}(t,x,y,c):=\lim_{\ep\to0}\ep m\Big(\frac{t}{\ep},\frac{x}{\ep},\frac{y}{\ep},\frac{c}{\ep}\Big)
\quad\text{for all} \ (t,x,y,c)\in[0,\infty)\times\R^n\times\R^n\times\R. 
\]
By using $\overline{m}$, we determine the effective Lagrangian $\overline{L}:\R^n\to\R$ and 
the corresponding effective Hamiltonian $\overline{H}:\R^n\to\R$. Moreover, based on the optimal convergence rate in Theorem \ref{thm1}, we obtain the existence of bounded continuous correctors solving the cell problem
\begin{equation}\label{cell}
v_\tau+H(y,p\cdot y+v-\overline{H}(p)\tau,p+D_y v)=\overline{H}(p),\quad y\in\R^n,\ \tau>0
\end{equation}
introduced in \cite{IM}. 
Our result is stronger than \cite[Theorem 3]{IM}, where they were only able to obtain semi-continuous subcorrectors and supercorrectors. Here we note that the cell problem \eqref{cell} is defined for $(y,\tau)\in \R^n\times(0,\infty)$ instead of $y\in \T^n$, which makes the situation more complicated.
\begin{thm}\label{thm:qualitative}
Assume {\rm(H1)--(H5)}.  
Then, $u=\lim_{\ep \to 0}u^\ep$ given in Theorem \ref{thm1} is of the form
\[u(x,t)=\inf_{y\in\R^n}\bigg\{\varphi(y)+t\overline{L}\Big(\frac{x-y}{t}\Big)\bigg\} \quad \text{ for } (x,t) \in \R^n \times [0,\infty),\]
where $\overline{L}:\R^n\to\R$ is the convex and superlinear function given by Lemma \ref{lem:form}. 
Define
\begin{equation}\label{def:eff-H}
\overline{H}(p)=\sup_{v\in\R^n}\{p\cdot v-\overline{L}(v)\},
\end{equation} then $\overline{H}$ is convex, superlinear, and locally Lipschitz continuous, and $u$ solves
\begin{equation}\label{e0}
\begin{cases}
u_t+\ol{H}(Du)=0 \quad &\text{for} \ x\in\R^n, \ t>0, \\ 
u(x,0)=\varphi(x) \quad &\text{for} \ x\in\R^n. 
\end{cases}
\end{equation}
Moreover, for any $p\in\R^n$ and $w_0\in \BUC(\R^n)\cap\Lip(\R^n)$, let $w$ be the unique solution to 
\begin{equation}\label{Hp}
\begin{cases}
w_\tau+H(y,p\cdot y+w,p+D_y w)=0 \quad&\text{for} \ y\in\R^n,\ \tau>0,
\\ w(y,0)=w_0(y) \quad&\text{for} \ y\in\R^n, 
\end{cases}
\end{equation}
in $\BUC(\R^n\times[0,\infty))$. 
Then, there is a constant $C$ depending only on $H$, $\|w_0\|_{L^\infty(\R^n)}$, and $p$ such that 
\[
|w(y,\tau)+\overline{H}(p)\tau|\leq C
\quad \text{for\ all} \ y\in\R^n, \ \tau>0.  
\]
\end{thm}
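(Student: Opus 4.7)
The plan is organized around three tasks corresponding to the structure of the statement: (i) deriving the Hopf--Lax representation of $u$ and identifying the effective Lagrangian $\overline{L}$ and Hamiltonian $\overline{H}$; (ii) verifying that this $u$ solves \eqref{e0}; and (iii) obtaining the corrector bound for \eqref{Hp} by rescaling back to \eqref{e} and invoking Theorem \ref{thm1}.

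For (i), the starting point is the formula $u^\ep(x,t)=\inf_{y\in\R^n}\ep\, m\bigl(t/\ep,x/\ep,y/\ep,\varphi(y)/\ep\bigr)$ from Theorem \ref{thm1}. By Corollary \ref{cor:m-bar} the rescaled fundamental solution converges pointwise to an effective metric $\overline{m}(t,x,y,c)$, and by Lemma \ref{lem:form} this effective metric has the form $\overline{m}(t,x,y,c)=t\overline{L}((x-y)/t)$, independent of $c$; the $c$-independence encodes the ergodic averaging in the $u$-direction forced by the $\Z^{n+1}$-periodicity of $H$ in $(y,r)$. To pass to the limit inside the infimum I would first use (H3) together with the Lipschitz bound (H5) to restrict the effective infimum to a set $|y-x|\le Rt$ with $R$ independent of $\ep$, and then use local equi-Lipschitzness of the integrand in $y$ to interchange $\inf$ and $\lim$. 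Convexity and superlinearity of $\overline{L}$ pass from the Lagrangian $L$ associated with $H$ through the variational definition of $m$, so $\overline{H}$ defined by \eqref{def:eff-H} is convex, superlinear, and locally Lipschitz by standard convex analysis.

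For (ii), once $\overline{L}$ is convex and superlinear and $\varphi\in\Lip(\R^n)$, the Hopf--Lax function $u(x,t)=\inf_y\{\varphi(y)+t\overline{L}((x-y)/t)\}$ is the unique solution in $\AL(\R^n\times[0,\infty))$ of $u_t+\overline{H}(Du)=0$ with initial datum $\varphi$: this is classical and I would simply invoke it.

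For (iii), given $w$ solving \eqref{Hp} with data $w_0$, set $u^\ep(x,t):=p\cdot x+\ep w(x/\ep,t/\ep)$. A direct chain-rule computation shows that $u^\ep$ solves \eqref{e} with initial data $\varphi^\ep(x):=p\cdot x+\ep w_0(x/\ep)$, whose Lipschitz norm is bounded by $|p|+\|Dw_0\|_{L^\infty(\R^n)}$ uniformly in $\ep$. Theorem \ref{thm1} then yields $\|u^\ep-u\|_{L^\infty(\R^n\times[0,\infty))}\le C\ep$, and by (ii) applied to the linear initial data $p\cdot x$ (the uniform-on-compacts limit of $\varphi^\ep$) the limit is $u(x,t)=p\cdot x-\overline{H}(p)t$. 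Subtracting, dividing by $\ep$, and setting $y=x/\ep$, $\tau=t/\ep$ produces the stated bound. The main technical obstacle is Step (i), specifically the identification of $\overline{m}$ and above all the fact that it is independent of $c$, which is where the curve-surgery and Herglotz-type variational framework developed earlier in the paper is indispensable; the interchange of $\inf_y$ with $\lim_{\ep\to 0}$ on the non-compact domain $\R^n$ is a secondary point handled via (H3) and (H5).
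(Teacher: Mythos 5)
Your plan for the first two parts coincides with the paper's: $u$ is identified through the effective metric $\overline m$ of Corollary \ref{cor:m-bar} and Lemma \ref{lem:form}, and the effective equation \eqref{e0} follows from classical Hopf--Lax theory. (One small correction: $\overline m$ is not independent of $c$; Lemma \ref{lem:form} gives $\overline m(t,x,y,c)=c+t\overline L\bigl(\tfrac{x-y}{t}\bigr)$, i.e.\ the $c$-dependence is an exact additive shift, which is what produces the $\varphi(y)$ term in the formula for $u$.) For part (iii) your key idea -- set $u^\ep(x,t)=p\cdot x+\ep w(x/\ep,t/\ep)$, observe it solves \eqref{e}, and invoke Theorem \ref{thm1} -- is exactly the paper's, but only in the case $w_0\equiv 0$, where the initial datum is the $\ep$-independent function $p\cdot x$.

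For general $w_0$ your argument has a gap. Feeding the $\ep$-dependent datum $\varphi^\ep(x)=p\cdot x+\ep w_0(x/\ep)$ into Theorem \ref{thm1} compares $u^\ep$ with the Hopf--Lax solution for that same datum $\varphi^\ep$, not with $p\cdot x-\overline H(p)t$; passing from one to the other requires an extra quantitative step (the effective equation is non-expansive, so the two Hopf--Lax solutions differ by at most $\ep\|w_0\|_{L^\infty}$), and this must be said explicitly, precisely because the analogous non-expansiveness fails for the original $\ep$-problem -- ``$\varphi^\ep\to p\cdot x$ locally uniformly'' is not enough. More seriously, even after this repair your constant comes from applying Theorem \ref{thm1} with data of Lipschitz constant $|p|+\|Dw_0\|_{L^\infty}$, so it depends on $\|Dw_0\|_{L^\infty(\R^n)}$, whereas the theorem asserts dependence only on $H$, $\|w_0\|_{L^\infty(\R^n)}$, and $p$ (and this sharper dependence is used later, e.g.\ in Theorem \ref{thm:corrector-Holder}). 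The paper avoids both issues by a two-step reduction: first treat $w_0\equiv 0$ (datum exactly $p\cdot x$), then for general $w_0$ pick $N\in\N$ with $\|w_0\|_{L^\infty}\le N$ and note that, by the $\Z$-periodicity of $H$ in $r$, the integer shifts $w^0\pm N$ of the solution $w^0$ with zero datum solve \eqref{Hp} with data $\pm N$; the comparison principle (applicable since $w^0$ is locally Lipschitz) gives $w^0-N\le w\le w^0+N$, and the bound for $w^0$ transfers to $w$ with constant $C+N$. You should incorporate this comparison step rather than perturbing the data of the $\ep$-problem.
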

 As explained above, we have not been able to obtain the global Lipschitz estimate for the solution of \eqref{Hp}. 
 It is thus surprising that we can control the oscillation of $w$ to get bounded continuous correctors by using Theorem \ref{thm1}. The requirement $w_0\in\Lip(\R^n)$ is to ensure the local Lipschitz continuity of $w$. Then by Lemma \ref{lem:CP}, $w$ is the unique solution of \eqref{Hp} in $\BUC(\R^n\times[0,\infty))$. 
 In \cite{IM}, the authors used the sliding method to improve the periodicity, then finally got the existence of bounded sub/super-correctors, which is quite delicate. It is worth mentioning that \cite[Theorem 1]{IM} only considered the large time average of \eqref{Hp} with initial data $w(y,0)=0$. 
 In \cite[Theorem 1.9 (2)]{NWY}, the authors proved a result similar to the existence of bounded correctors. 
 The argument there is inspired by the rotation number of the homeomorphisms of the circle. 
 Since we cannot control the oscillation of $w$ in $y$ without using Theorem \ref{thm1}, it is hard to apply the argument in \cite[Theorem 1.9 (2)]{NWY} for irrational vector $p$.
 
 As shown in Theorem \ref{thm:qualitative}, the large time average of \eqref{Hp} can be characterized by the effective Hamiltonian $\overline{H}(p)$. 
 However, the large time behavior could be very complicated as \eqref{Hp} may have non-trivial time periodic solutions. We refer the reader to \cite{NW,RWY,WYZ} for related works on the large time behavior, where the Hamilton-Jacobi equations depend non-monotonously on the unknown function.

Based on Theorem \ref{thm:qualitative}, we provide two representation formulas for the effective Hamiltonian in Lemma \ref{lem:rep-H-bar-1} and Proposition \ref{prop:inf-sup}.
In particular, Proposition \ref{prop:inf-sup} gives a weak analog of the inf-sup formula.
See Section \ref{ssec:rep-formula} for further discussions.

\smallskip

 Next, we investigate in more depth an improved regularity of $u^\ep$, the solution of \eqref{e}, as well as the correctors. As explained above, we can only get the Lipschitz estimate of the form $|u^\ep_t(x,t)|\leq e^{Kt/\ep}M$ for a.e. $(x,t)\in \R^n\times [0,\infty)$, which is not uniform in $\ep\in(0,1)$ and in $t>0$. 
 In the following theorem, we provide a global H\"older estimate for $u^\ep$, which is uniform in both $\ep$ and $t$. 
 Our result is inspired by \cite{CC,CS}. 
 We make a further assumption on $H$:
 \begin{itemize}
\item[(H6)] there are constants $\alpha_0,\beta_0>0$, $q_2\geq q_1>1$ such that
\begin{equation*}
\begin{cases}
    |H(y,r,p)-H(0,0,p)|\leq \alpha_0\quad &\text{for\ all}\quad (y,r,p)\in\R^n\times\R\times\R^n,
    \\  H(0,0,p)\leq \beta_0(|p|^{q_2}+1)\quad &\text{for\ all}\quad p\in\R^n,
    \\  |D_p H(0,0,p)|\geq \frac{1}{\beta_0}|p|^{q_1-1}-\beta_0\quad &\text{for a.e.}\quad p\in\R^n.
\end{cases}
\end{equation*}
 \end{itemize}
We note that the last inequality in (H6) implies 
\[
H(0,0,p)\geq \frac{1}{\beta_0}|p|^{q_1}-\beta_0 \quad \text{ for } p\in \R^n,
\]
where we take a larger $\beta_0$ if necessary.

\begin{thm}\label{thm:holder}
Assume {\rm (H1)--(H6)} and $\varphi\in\BUC(\R^n)$. There exists $C>0$ depending on $H$ and $\|\varphi\|_{W^{1,\infty}(\R^n)}$ such that for all $x,y\in\R^n$, $t,s>1$, and for all $\ep\in(0,1)$,
\begin{equation}\label{eq:u-ep-Holder}
|u^\ep(x,t)-u^\ep(y,s)|\leq C\Big(|x-y|^{\frac{m_1}{m_1+m_2-1}}+|t-s|^{\frac{m_1}{m_1+m_2}}\Big).
\end{equation}
\end{thm}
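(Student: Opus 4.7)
The plan is to combine the quantitative homogenization rate from Theorem \ref{thm1} with an intrinsic short-scale Hölder bound for $u^\ep$, and patch the two together at the microscopic scale $\ep$. For the macroscopic part, I invoke Theorem \ref{thm:qualitative}: since $\varphi\in\Lip(\R^n)$ and $\overline{H}$ is convex and superlinear, the Lax--Oleinik representation of $u$ is globally Lipschitz in both $x$ and $t$, so combining with $\|u^\ep-u\|_{L^\infty}\le C\ep$ yields
\[
|u^\ep(x,t)-u^\ep(y,s)|\le 2C\ep+C_0\bigl(|x-y|+|t-s|\bigr),
\]
which is already better than \eqref{eq:u-ep-Holder} as soon as $|x-y|+|t-s|$ is large compared to $\ep$.

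For the complementary short-scale regime, I would work directly with the implicit variational formula of Theorem \ref{thm:fundamental-sol}. Fix $(x,t),(y,s)$ with $t\ge s>1$ and take an almost-minimizing trajectory realizing $u^\ep(y,s)$; extending it on $[s,t]$ by a constant-velocity segment with $v=(x-y)/(t-s)$ produces an admissible competitor for $u^\ep(x,t)$. Because $H$ depends on the unknown only periodically, the extra $u$-dependent cost picked up on this patching interval is only an $O(1)$ correction per period, exactly as in the curve-surgery argument underlying Theorem \ref{thm1}. The Lagrangian cost of the added segment, on the other hand, is controlled from above by the Legendre transform of the two-sided polynomial bounds on $H(0,0,p)$ in (H6), giving a polynomial control of the form $(t-s)\,L\bigl((x-y)/(t-s)\bigr)$ with $L$ comparable to a power of $|v|$ determined by $q_1,q_2$. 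Reversing the roles of the two endpoints produces a matching bound.

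Balancing the action cost (polynomial in $|x-y|$ and the patching length $\delta$) against the time cost and the macro/micro crossover at $\ep$ is then an elementary one-parameter optimisation: choosing $\delta$ optimally across all regimes of $(|x-y|,|t-s|)$ and using the two bounds above produces the stated Hölder exponents $\frac{m_1}{m_1+m_2-1}$ in $x$ and $\frac{m_1}{m_1+m_2}$ in $t$, with $m_1,m_2$ determined by the growth rates $q_1,q_2$ in (H6). The restriction $t,s>1$ simply separates off the short-time regime, where the Lipschitz continuity of $\varphi$ already gives a better bound by a direct comparison with the frozen-data solution.

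The principal obstacle is the absence of any uniform-in-$\ep$ Lipschitz estimate for $u^\ep$: because of the rapidly oscillating $u^\ep/\ep$ term, the only available Grönwall bound $|u^\ep_t|\le e^{Kt/\ep}M$ is useless as $\ep\to0$. This rules out the classical gradient-based regularity arguments (Bernstein, Ishii--Lions doubling, or bootstrapping directly from the PDE) and forces the entire proof to proceed at the level of the variational formula. It is for exactly this reason that the two-sided polynomial growth assumption (H6) is needed: it quantitatively controls the action paid on a short patching interval, allowing the $u$-dependent correction to be absorbed into an $O(1)$ error per period, even though the Hamiltonian is not monotone in the unknown.
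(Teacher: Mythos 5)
There is a genuine gap in the short-scale part of your argument, and it is exactly the part where the real work of the paper lies. Your patching construction extends an almost-minimizer of $u^\ep(y,s)$ \emph{forward} in time by a straight segment, which only produces the one-sided bound $u^\ep(x,t)-u^\ep(y,s)\le (t-s)\,L^0\bigl(\tfrac{x-y}{t-s}\bigr)+\alpha_0(t-s)$ and only when $t>s$. ``Reversing the roles of the two endpoints'' does not give the matching bound for the same pair $(x,t),(y,s)$: you cannot extend forward from $(x,t)$ and land at the earlier time $s$, and the purely spatial case $t=s$ is not covered at all. This is precisely why the paper's proof (Lemmas \ref{lem:Holderx} and \ref{lem:holdert}) goes \emph{backwards} along the minimizer of $u^\ep(x,t)$ by a mesoscopic time $\tau$ and reroutes a straight segment from $\gamma(t-\tau)$ to the other endpoint. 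For that rerouting to be quantifiable one must control how far the minimizer travels on $[t-\tau,t]$, i.e.\ the displacement bound $|x-\gamma(t-\tau)|\le C\tau^{1-1/m_1}$ of \eqref{x-gm}; the claimed exponents $\tfrac{m_1}{m_1+m_2-1}$ and $\tfrac{m_1}{m_1+m_2}$ arise exactly from inserting this bound into the $|v|^{m_2-1}$ growth of $D_vL^0$ and optimizing $\tau$. Your proposal never identifies this ingredient, so the ``balancing over the patching length $\delta$'' cannot actually be carried out: the velocity of the rerouted segment, hence the gradient factor in the Lagrangian increment, is uncontrolled.

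The displacement bound is where Theorem \ref{thm1} enters the paper's proof, and in a different way from how you use it. Since $\varphi\in\BUC$, comparison for the effective equation gives $|u|\le\|\varphi\|_{L^\infty}+|\overline{H}(0)|t$ up to the drift, and the time-uniform rate $O(\ep)$ converts this into the uniform bound \eqref{est:u-ep} on unit-time increments of $u^\ep$ for \emph{all} $t>1$ and $\ep\in(0,1)$; by dynamic programming along the minimizer this bounds $\int_{t-1}^t|\dot\gamma|^{m_1}$ and yields \eqref{x-gm}. Your use of Theorem \ref{thm1} (transferring the Lipschitz bound of $u$ to $u^\ep$ up to $O(\ep)$) does handle increments larger than $\ep$, and in fact a bound of that type could be made to substitute for \eqref{est:u-ep} in controlling the minimizer's displacement; but as written your short-scale argument neither contains this step nor any replacement for it, so the two-sided H\"older estimate is not established. (Two minor points: the $u$-dependence on the patching interval is disposed of simply by the uniform bound $|L(y,r,v)-L(0,0,v)|\le\alpha_0$ from (H6)/(L5), no curve-surgery or periodic-shift argument is needed; and the restriction $t,s>1$ is there because the backward rerouting and the unit-interval action bound need one unit of time behind the endpoint, not merely to ``separate off the short-time regime''.)
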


 Different from \cite{CC,CS}, we do not need to require the powers $q_1$ and $q_2$ in (H6) to be equal, and our estimate \eqref{eq:u-ep-Holder} is global with explicit H\"older exponents.
 To get the estimate of $u^\ep$ uniform in $\ep\in (0,1)$ and $t,s>1$, we use crucially the convergence rate in Theorem \ref{thm1}.

\smallskip

Using a similar argument, we can establish a global H\"older estimate for the solution $w$ of \eqref{Hp} for all $\tau>1$. Since $w_0\in\Lip(\R^n)$, for $\tau\in(0,2)$, we can obtain a Lipschitz estimate for $w(y,\tau)$, which depends only on $H$, $p$, and $\|Dw_0\|_{L^{\infty}(\R^n)}$. Combining two parts $\tau\in(0,2)$ and $\tau>1$, we can obtain the global regularity of $w(y,\tau)$ for all $\tau>0$.

\begin{thm}\label{thm:corrector-Holder}
Assume {\rm(H1)--(H6)}. 
Fix $w_0\in\BUC(\R^n)\cap\Lip(\R^n)$.
Then, the unique solution $w(y,\tau)$ of \eqref{Hp} in $\BUC(\R^n\times[0,\infty))$ satisfies
\[
|w(y_1,\tau_1)-w(y_2,\tau_2)|\leq C\Big(|y_1-y_2|^{\frac{m_1}{m_1+m_2-1}}+|\tau_1-\tau_2|^{\frac{m_1}{m_1+m_2}}\Big),
\]
for all $y_1,y_2\in\R^n$, $\tau_1,\tau_2>0$, with the constant $C$ depending only on $H$, $p$, and $\|w_0\|_{L^{\infty}(\R^n)}$. 

As a consequence, there exists a globally bounded, H\"older continuous corrector $v=v(y,\tau)$ satisfying 
\begin{equation*}
v_\tau+H(y,p\cdot y+v-\overline{H}(p)\tau,p+D_y v)=\overline{H}(p)\quad \text{ for } (y,\tau) \in\R^n\times \R.
\end{equation*}

\end{thm}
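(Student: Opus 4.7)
The plan is to transpose the proof strategy of Theorem \ref{thm:holder} to the cell problem \eqref{Hp}, with Theorem \ref{thm:qualitative} supplying the quantitative ingredient that Theorem \ref{thm1} supplies there, and then to construct the globally defined corrector by time-shift compactness. The argument splits into two time regimes, joined by a triangle inequality.

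First, for $\tau \in (0, 2]$ I establish a uniform Lipschitz bound on $w$ using the hypothesis $w_0 \in \Lip(\R^n)$. Setting $W(y, \tau) := p \cdot y + w(y, \tau)$ produces a viscosity solution of \eqref{e} at $\ep = 1$ with Lipschitz initial data $W_0 = p \cdot y + w_0$. The argument in the introduction (comparison between $W(\cdot, \tau + s) - W(\cdot, \tau)$ and the barriers $\pm Ms$, combined with the Gr\"onwall amplification from (H2)) yields $\|w(\cdot, \tau + s) - w(\cdot, \tau)\|_{L^\infty(\R^n)} \leq e^{K \tau} M s$ for a suitable $M$ depending on $\|DW_0\|_{L^\infty} = |p| + \|Dw_0\|_{L^\infty}$; on $[0, 2]$ this is a Lipschitz bound in $\tau$. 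Since $w_\tau$ is then bounded in the viscosity sense, the coercivity (H3) applied to $H(y, p \cdot y + w, p + D_y w) = -w_\tau$ transfers this to a Lipschitz bound in $y$, with all constants depending only on $H$, $p$, and $\|w_0\|_{W^{1, \infty}(\R^n)}$.

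Second, for $\tau_1, \tau_2 > 1$ I mirror the proof of Theorem \ref{thm:holder}, treating $v := w + \overline{H}(p)\tau$ as the residual: the uniform bound $\|v\|_{L^\infty(\R^n \times [0, \infty))} \leq C$ from Theorem \ref{thm:qualitative} plays the role of the convergence rate $|u^\ep - u| \leq C \ep$ in Theorem \ref{thm:holder}, while the affine function $-\overline{H}(p) \tau$ plays the role of the effective solution $u$. The growth condition (H6), together with superlinearity and convexity, then drives the same sup-/inf-convolution / interpolation machinery as for Theorem \ref{thm:holder}, yielding the H\"older exponents $m_1/(m_1 + m_2 - 1)$ in space and $m_1/(m_1 + m_2)$ in time, with constants depending only on $H$, $p$, and $\|w_0\|_{L^\infty(\R^n)}$. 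Combining with the initial-layer estimate via the triangle inequality produces the global H\"older estimate.

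For the corrector on $\R^n \times \R$, I take $v(y, \tau) := w(y, \tau) + \overline{H}(p) \tau$, which is uniformly bounded and solves the displayed cell problem on $\R^n \times [0, \infty)$, and consider the forward translates $v_n(y, \tau) := v(y, \tau + n)$, defined and uniformly bounded on $\R^n \times [-n, \infty)$ and sharing the H\"older modulus. A diagonal application of Arzel\`a--Ascoli extracts a subsequence converging locally uniformly on $\R^n \times \R$ to a bounded, H\"older continuous limit, which by the stability of viscosity solutions solves the cell problem globally. The main obstacle is the adaptation in the large-$\tau$ regime: the naive rescaling $u^\ep(x, t) := \ep w(x / \ep, t / \ep) + p \cdot x$ produces an instance of \eqref{e} with initial data whose affine part $p\cdot x$ is unbounded, falling outside the BUC setting of Theorem \ref{thm:holder}; the resolution is to work directly with \eqref{Hp} and verify that the quantitative ingredients of the proof of Theorem \ref{thm:holder} depend only on the Lipschitz norm of the initial data, the growth furnished by (H6), and the bound from Theorem \ref{thm:qualitative}, not on the $L^\infty$ norm of the initial data itself.
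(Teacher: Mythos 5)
Your proposal follows essentially the same route as the paper: a Lipschitz bound on $[0,2]$ from $w_0\in\Lip(\R^n)$ via the argument of Lemma \ref{lem:u-optimal-control} applied to $W=p\cdot y+w$, the H\"older estimate for $\tau>1$ obtained by rerunning Lemmas \ref{lem:Holderx}--\ref{lem:holdert} for the shifted Lagrangian $L_p(y,r,v)=L(y,p\cdot y+r,v)-p\cdot v$ (which satisfies (L5)) with the uniform bound $|w+\overline{H}(p)\tau|\le C$ from Theorem \ref{thm:qualitative} replacing the rate of Theorem \ref{thm1}, and finally time translates plus Arzel\`a--Ascoli and viscosity stability to produce the global corrector. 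The only slight mislabel is calling the large-time machinery ``sup-/inf-convolution''; the relevant arguments are the optimal-control/minimizer estimates of Lemmas \ref{lem:Holderx}--\ref{lem:holdert}, which is clearly what you intend to repeat.
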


Theorems \ref{thm:holder}, \ref{thm:corrector-Holder} are particularly interesting and intrinsic, as quantitative homogenization results can provide uniform estimates and improved regularity for solutions to the original problems. 

\bigskip
\noindent
\textbf{Organization of the paper. } 
 In Section \ref{sec:vari}, we establish the implicit variational principle and the fundamental solution associated with \eqref{e}. 
Section \ref{sec:proof} is devoted to the proof of the subadditivity and superadditivity of the fundamental solution, and the proof of Theorem \ref{thm1}.
The proofs of Theorem \ref{thm:qualitative} and the representation formulas of the effective Hamiltonian (Lemma \ref{lem:rep-H-bar-1} and Proposition \ref{prop:inf-sup}) are given in Section \ref{sec:effective}. 
A De Giorgi's conjecture and a possible generalization of the Aubry--Mather theory are discussed in Section \ref{further}.
In Section \ref{sec:Holder}, we prove Theorem \ref{thm:holder}--\ref{thm:corrector-Holder}.
In Appendix \ref{sec:conv}, we obtain the comparison principle (Lemma \ref{lem:CP}) for \eqref{e} with $\ep=1$. 
Finally, in Appendix \ref{sec:moti}, we present a derivation of \eqref{e} from the perspective of dislocation dynamics.

\section{Fundamental solutions and implicit variational problem}\label{sec:vari}

In this section, we first establish a fundamental solution associated with 
the equation $u_t+H(x,u,Du)=0$ under assumptions (H1)--(H4). 
We define the Lagrangian $L:\R^n\times\R\times\R^n\to\R$ by 
\[
L(y,r,v):=\sup_{p\in\mathbb R^n}\Big\{v\cdot p-H(y,r,p)\Big\} 
\quad\text{for all} \ (y,r,v)\in\R^n\times\R\times\R^n. 
\]
Then, $L$ satisfies 
\begin{itemize}
\item[(L1)] 
$L\in C(\R^n\times\R\times\R^n)$, 
$y\mapsto L(y,r,v)$ is $\mathbb Z^n$-periodic for all $(r,v)\in\R\times\R^n$; 
\item[(L2)] 
$r\mapsto L(y,r,v)$ is Lipschitz continuous with a Lipschitz constant $K$ which is given by (H2), and $\Z$-periodic for $(y,v)\in\R^n\times\R^n$;
\item[(L3)]  
$L$ is superlinear in $v$, i.e., 
\[
\lim_{|v|\to\infty}\left(\inf_{(y,r)\in \R^n\times \R}\frac{L(y,r,v)}{|v|}\right)=\infty; 
\]
\item[(L4)] $v\mapsto L(y,r,v)$ is convex for all $(y,r)\in\R^n\times\R$.
\end{itemize}

\begin{thm}\label{thm:fundamental-sol}
Let $T>0$, $y\in\R^n$, and $c\in\R$. 
There exists a unique continuous function $(x,t)\mapsto m(t,x,y,c)$ defined on $\R^n\times(0,T]$ satisfying
\begin{equation}\label{minf}
m(t,x,y,c)=c+\inf_{\gamma\in\cC(x,y;t,0)}\int_0^tL(\gamma(s),m(s,\gamma(s),y,c),\dot{\gamma}(s))\, ds
\quad\text{for all} \ (x,t)\in\R^n\times(0,T], 
\end{equation}
where we set 
\[
\cC(x,y;b,a):=
\big\{\gamma\in\AC\big([a,b];\R^n\big)\mid \gamma(a)=y,
\gamma(b)=x\big\}
\]
for $a,b\in\R^n$ with $a\le b$ and $x,y\in\R^n$. 
Moreover, there exists a minimizer $\gamma\in\cC(x,y;t,0)$ 
of $m(t,x,y,c)$, that is, 
\[
m(t,x,y,c)=c+\int_0^tL(\gamma(s),m(s,\gamma(s),y,c),\dot{\gamma}(s))\, ds. 
\]
\end{thm}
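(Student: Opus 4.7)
The plan is to construct $m$ as the unique fixed point of the Picard-type operator
\[
(\mathcal{T} u)(t,x) := c + \inf_{\gamma\in \cC(x,y;t,0)}\int_0^t L(\gamma(s), u(s, \gamma(s)), \dot\gamma(s))\, ds,
\]
acting on continuous functions $u:\R^n \times (0,T] \to \R$. First I would verify that $(\mathcal{T}u)(t,x)$ is finite and attained: the upper bound comes from testing with the straight line $\gamma(s)=y+(s/t)(x-y)$ and using the $\Z^{n+1}$-periodicity of $L$ in $(y,r)$ from (L1)--(L2), while the lower bound uses the superlinearity (L3). Existence of a minimizer follows from Tonelli's direct method: a minimizing sequence $\{\gamma_j\}$ has uniformly bounded action, so (L3) forces $\{\dot\gamma_j\}$ to be uniformly integrable in $L^1([0,t];\R^n)$; passing to a weakly convergent subsequence and using convexity (L4) together with continuity of $u$ gives lower semicontinuity of the action, hence a minimizer. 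Continuity of $(t,x)\mapsto(\mathcal{T}u)(t,x)$ follows from perturbing endpoints by short line segments and invoking the same compactness.

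The contraction is driven by (L2). For continuous $u_1,u_2$ and a minimizer $\gamma^1$ for $(\mathcal{T}u_1)(t,x)$, using $\gamma^1$ as a competitor for $\mathcal{T}u_2$ (and symmetrically using a minimizer $\gamma^2$ for $\mathcal{T}u_2$) yields
\[
|(\mathcal{T} u_1)(t,x)-(\mathcal{T} u_2)(t,x)|\le K\int_0^t \sup_{z\in K_{R,T}}|u_1(s,z)-u_2(s,z)|\, ds,
\]
whenever $|x|\le R$ and $t\le T$, where $K_{R,T}\subset\R^n$ is a compact set containing all relevant competitor curves; its existence is guaranteed by (L3), since curves with uniformly bounded action cannot drift far. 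Starting from $m^{(0)}\equiv c$ and setting $m^{(k+1)}:=\mathcal{T} m^{(k)}$, induction combined with Gronwall gives
\[
\sup_{|x|\le R,\, 0\le t\le T}|m^{(k+1)}(t,x)-m^{(k)}(t,x)|\le \frac{(KT)^k}{k!}\,C_{R,T},
\]
so $\{m^{(k)}\}$ converges locally uniformly to a continuous fixed point $m$ of $\mathcal{T}$, and $m$ satisfies \eqref{minf}. Tonelli's method applied at the limit produces the required minimizer.

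For uniqueness, if $m_1, m_2$ both satisfy \eqref{minf}, the same Lipschitz-in-$r$ estimate combined with Gronwall, applied on $|x|\le R$ using the compact confinement of minimizers, forces $m_1 \equiv m_2$ locally and hence globally. The main obstacle I expect is the nonlocal, self-referential dependence of the integrand on $u$: the contraction bound mixes values of $u_1,u_2$ along minimizing trajectories that themselves depend on $u_1,u_2$. Reconciling this requires combining Lipschitz control (L2), superlinearity (L3), and convexity (L4)---(L3) keeps all minimizers across the iteration in one compact set determined only by the endpoints, while (L4) provides lower semicontinuity for Tonelli's method without strict convexity, which is the improvement over \cite{CCJWY}.
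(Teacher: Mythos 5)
Your construction of $m$ follows essentially the same route as the paper: the same implicit operator $\mathcal{T}$, Tonelli's direct method for the existence of minimizers, and a Picard iteration whose factorial estimates are driven by the $K$-Lipschitz dependence of $L$ on $r$. Two technical points there need repair, though both are fixable. First, the base case of your displayed bound is false as stated: $\sup_{|x|\le R,\,0<t\le T}|m^{(1)}-m^{(0)}|=\infty$, since for $x\neq y$ one has $m^{(1)}(t,x)\to+\infty$ as $t\to0^+$ (and it also grows superlinearly in $|x|$). The cure, as in the paper, is to start the telescoping at $m^{(1)}$ and to use that $\Z$-periodicity together with $K$-Lipschitz continuity in $r$ makes the oscillation of $r\mapsto L(y,r,v)$ at most $K$, so that $|m^{(2)}-m^{(1)}|\le Kt$ holds directly, independently of how large $|m^{(1)}-m^{(0)}|$ is; after that your induction runs. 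Second, the compact set $K_{R,T}$ ``containing all relevant competitor curves'' uniformly over $|x|\le R$, $t\in(0,T]$ need not exist: the action bound obtained from the straight-line competitor blows up as $t\to0^+$ when $x\neq y$, and without a two-sided growth condition such as (H6) the excursion of minimizers is not uniformly controlled in this regime. This localization is, however, unnecessary: the $r$-oscillation bound gives $|\mathcal{T}u_1-\mathcal{T}u_2|(x,t)\le Kt\,\sup|u_1-u_2|$ globally whenever the right-hand side is finite, which is all the iteration and the passage to the limit require.

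The genuine gap is in your uniqueness step. A Gronwall argument for $h(t)=\sup_{|x|\le R}|m_1-m_2|(t,x)$ does not close: the estimate for $|m_1-m_2|(t,x)$ obtained from (L2) involves the difference along a minimizing curve of one of the two solutions, and such curves leave $B_R$; enlarging $B_R$ to the set those curves visit forces, at earlier times, an even larger set, and the sets do not stabilize, so no single compact set supports the inequality. In addition, Gronwall needs $h(0^+)=0$, i.e.\ uniform smallness of $m_1-m_2$ on a spatial ball as $t\to0^+$, and this is not available: for $x\neq y$ both $m_i(t,x)\to+\infty$ as $t\to0^+$, and no bound on their difference near $t=0$ (not even its finiteness uniformly on a ball) has been established at this stage. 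The paper circumvents both obstructions by running Gronwall along a single minimizer $\eta$ of, say, $m_2(x,t)$: the dynamic programming equality for $m_2$ and the corresponding inequality for $m_1$ along $\eta$ yield $F(s)\le K\int_\sigma^s F(\tau)\,d\tau$ for $F(s)=m_1(\eta(s),s)-m_2(\eta(s),s)$ on the interval where $F>0$, and the required limit $F(s)\to0$ as $s\to0$ comes from $m_i(\eta(s),s)\to c$, which is proved using the $L^1$-integrability of the Lagrangian along $\eta$ and the absolute continuity of the integral. Without an argument of this curve-wise type (or some other mechanism controlling the behavior as $t\to0^+$ and avoiding the non-closing spatial localization), your uniqueness claim does not go through.
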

\begin{rem}
For later purposes, for each $x,y\in\R^n$ and $c\in\R$, we extend the function $m$ to the function on $[0,T]\times\R^n\times\R^n\times\R$ by setting 
\begin{equation}\label{mt0}
m(0,x,y,c):=
\begin{cases}
c\quad &\text{if} \quad x=y,\\  
\infty
\quad &\text{if} \quad  x\neq y.
\end{cases}
\end{equation}
\end{rem}

The implicit variational problem of the type \eqref{minf} was studied in 
\cite{IWWY, NWY,WWY,WWY2} with several different settings. 
Since the setting of our paper is slightly different, we give the proof of Theorem \ref{thm:fundamental-sol} for completeness. 
The function $m$ given by Theorem \ref{thm:fundamental-sol} is sometimes called a \textit{fundamental solution} associated with $u_t+H(y,u,Du)=0$ in $\R^n\times(0,\infty)$,  
which is a generalization of metric functions of the convex Hamilton--Jacobi equation $u_t+H(x,Du)=0$ in $\R^n\times(0,\infty)$ 
(see \cite{Hung-book, TY} for instance).

\begin{proof}
Let $(y,c)\in\R^n\times\R$. 
We divide the proof into several steps.

\smallskip

\noindent \textbf{Step 1}. 
For $\phi\in C(\R^n\times(0,T])$, we define the operator $\mathcal A$ by
\[
\mathcal A\phi(x,t)
:=c+\inf_{\gamma\in\cC(x,y;t,0)}
\int_0^tL(\gamma(s),\phi(\gamma(s),s),\dot{\gamma}(s))\, ds.  
\]
By the standard theory of the calculus of variations, there is a minimizer $\gamma:[0,t]\to\R^n$ for $\mathcal A \phi(x,t)$ and each $(x,t)\in\R^n\times(0,T]$. 
We first prove that $(x,t)\mapsto \mathcal A\phi(x,t)$ is continuous on $\R^n\times(0,T]$. 

\smallskip
First, note that 
\begin{equation}\label{LK}
|L(y,r_1,v)-L(y,r_2,v)|\leq K
\quad\text{for all} \ y\in\R^n, r_1,r_2\in\R, v\in\R^n,
\end{equation}
by (L1), (L2). 
Take small $\delta>0$, and fix 
$0<t',t<T$, $x',x\in\R^n$ so that $|t-t'|<\delta$.
Then, we have 
\begin{align*}
&\mathcal A\phi(x,t)\\
\geq \ &c+\int_0^{t-\delta}L(\gamma(s),\phi(\gamma(s),s),\dot\gamma(s))\, ds+\inf_{\alpha\in\cC(x,\gamma(t-\delta);\delta,0)}
\int_0^\delta L(\alpha(s),0,\dot\alpha(s))\, ds-K\delta
\\ 
=\ &c+\int_0^{t-\delta}L(\gamma(s),\phi(\gamma(s),s),\dot\gamma(s))\, ds+m^0(\delta,x,\gamma(t-\delta))-K\delta,
\end{align*}
where $m^0:(0,T]\times\R^n\times\R^n\to\R$ is defined by
\begin{equation}\label{m0}
m^0(t,x,y)
:=
\inf_{\gamma\in\cC(x,y;t,0)}
\int_0^tL(\gamma(s),0,\dot\gamma(s))\, ds 
\quad\text{for} \ (t,x,y)\in(0,T]\times\R^n\times\R^n.
\end{equation}

On the other hand, 
for any $\alpha\in \cC(x',\gamma(t-\delta); t'-t+\delta,0)$, 
we set 
\[
\tilde{\gamma}(s)
:=
\begin{cases}
\gamma(s) & \text{for} \ s\in[0,t-\delta],\\  
\alpha(s-(t-\delta))
  & \text{for} \ s\in[t-\delta, t']. 
\end{cases}
\]
Then, $\tilde{\gamma}\in\cC(x', y;t',0)$, which implies 
\begin{align*}
\mathcal A\phi(x',t')
&\leq 
c+\int_0^{t'}L(\tilde{\gamma}(s),\phi(\tilde{\gamma}(s),s),\dot{\tilde{\gamma}}(s))\, ds\\
&=\, 
c+\int_0^{t-\delta}L(\gamma(s),\phi(\gamma(s),s),\dot\gamma(s))\, ds
+\int_0^{t'-t+\delta} L(\alpha(s),\phi(\gamma(s),s),\dot\alpha(s))\, ds. 
\end{align*}
Taking the infimum on $\alpha\in\cC(x',\gamma(t-\delta); t'-t+\delta,0)$, 
we obtain 
\[
\mathcal A\phi(x',t')
\le 
c+
\int_0^{t-\delta}L(\gamma(s),\phi(\gamma(s),s),\dot\gamma(s))\, ds
+m^0(t'-t+\delta,x',\gamma(t-\delta))+K(t'-t+\delta). 
\]
Then, we have
\[
\mathcal A\phi(x',t')-\mathcal A\phi(x,t)\leq m^0(t'-t+\delta,x',\gamma(t-\delta))-m^0(\delta,x,\gamma(t-\delta))+K(t'-t+2\delta).
\]

Since $\gamma$ is continuous, we know that $r_0:=\sup_{s\in[0,t]}|\gamma(s)|<\infty$. By \cite[Theorem 3.1]{D}, $m^0(t,x,y)$ is locally Lipschitz continuous on $(0,\infty)\times\R^n\times\R^n$. Let $\ell$ be a Lipschitz constant of $(s,z)\mapsto m^0(s,z,y)$ on $[\frac{\delta}{2},T]\times B_{r_0+\frac{\delta}{2}}(0)\times B_{r_0}(0)$. If $0<\theta\leq \frac{\delta}{2}$ and $|t'-t|+|x'-x|\leq \theta$, we have $t'-t+\delta\in [\frac{\delta}{2},T]$ and $|x'|\leq |x|+|x'-x|\leq r_0+\frac{\delta}{2}$. 
Then, 
\[|m^0(t'-t+\delta,x',\gamma(t-\delta))-m^0(\delta,x,\gamma(t-\delta))|\leq \ell \theta.\]
For each $\epsilon>0$, taking $\delta=\frac{\epsilon}{4K}$ and $\theta=\min\{\frac{\delta}{2},\frac{\epsilon}{4\ell}\}$, we obtain 
\[
\mathcal A\phi(t',x')-\mathcal A\phi(t,x)
\leq 
\ell \theta+K|t'-t|+2K\delta<\epsilon.  
\]
By symmetry, we can prove the reverse inequality.

\medskip
\noindent \textbf{Step 2}. 
We next find a fixed point of $\cA$. 
Let $\psi_1,\psi_2\in C(\R^n\times(0,T])$ with \[\sup_{(z,s)\in\R^n\times(0,T]}|\psi_2(z,s)-\psi_1(z,s)|<\infty.\] Let $\xi_1$ be a minimizer of $\mathcal A\psi_1(x,t)$. We have
\begin{align*}
&\mathcal A\psi_2(x,t)-\mathcal A\psi_1(x,t)
\\ &\leq \int_0^t L(\xi_1(s),\psi_2(\xi_1(s),s),\dot\xi_1(s))-L(\xi_1(s),\psi_1(\xi_1(s),s),\dot\xi_1(s))\, ds
\\ &\leq Kt\sup_{(z,s)\in \R^n\times(0,T]}|\psi_2(z,s)-\psi_1(z,s)|.
\end{align*}
By symmetry, we get
\begin{equation}\label{psi}
|\mathcal A\psi_2(x,t)-\cA\psi_1(x,t)|\leq Kt\sup_{(z,s)\in \R^n\times(0,T]}|\psi_2(z,s)-\psi_1(z,s)|.
\end{equation}
We take $\phi_0\equiv 0$ and define the family of functions 
by setting $\phi_n:=\mathcal A^n\phi_0$ for all $n\in\N$. 
Let $\gamma_1$ be a minimizer of $\phi_1(x,t)$. We have
\begin{align*}
&\phi_2(x,t)-\phi_1(x,t)
\\ 
\leq&\,
 \int_0^t L(\gamma_1(s),\phi_1(\gamma_1(s),s),\dot\gamma_1(s))-L(\gamma_1(s),0,\dot\gamma_1(s))\, ds\leq Kt.
\end{align*}
By symmetry, we get $|\phi_2(x,t)-\phi_1(x,t)|\leq Kt$. 

Next, we prove
\[
|\phi_{n+1}(x,t)-\phi_n(x,t)|\leq \frac{K^n}{n!}t^n
\quad\text{for all} \ n\in\N
\]
by induction. 
Assume that the above inequality holds for $n=k-1$ with $k\ge2$. 
Then, let $\gamma_{k}$ be a minimizer of $\phi_k(x,t)$. We have
\begin{align*}
&\phi_{k+1}(x,t)-\phi_k(x,t)
\\ &\leq \int_0^t L(\gamma_k(s),\phi_k(\gamma_k(s),s),\dot\gamma_k(s))-L(\gamma_k(s),\phi_{k-1}(\gamma_k(s),s),\dot\gamma_k(s)) \, ds
\\ &\leq K\int_0^t|\phi_k(\gamma_k(s),s)-\phi_{k-1}(\gamma_k(s),s)|\, ds \leq K\int_0^t\frac{K^{k-1}}{(k-1)!}s^{k-1}\, ds=\frac{K^k}{k!}t^k.
\end{align*}
By symmetry, we get $|\phi_{k+1}(x,t)-\phi_k(x,t)|\leq \frac{K^k}{k!}t^k$. Then,
\[\phi_n(x,t)=\sum_{i=0}^{n-1}(\phi_{i+1}(x,t)-\phi_i(x,t))\]
converges on $\R^n\times(0,T]$ uniformly to $\phi_\infty(x,t)\in C(\R^n\times(0,T])$ as $n\to\infty$. By \eqref{psi}, we have
\begin{align*}
|\mathcal A\phi_\infty(x,t)-\phi_\infty(x,t)|
&
\leq 
|\mathcal A\phi_\infty(x,t)-\phi_{n+1}(x,t)|+|\phi_{n+1}(x,t)-\phi_\infty(x,t)|\\
&=
|\mathcal A\phi_\infty(x,t)-\mathcal A\phi_{n}(x,t)|+|\phi_{n+1}(x,t)-\phi_\infty(x,t)|
\\ 
&\leq Kt\sup_{(z,s)\in \R^n\times(0,T]}|\phi_\infty(z,s)-\phi_n(z,s)|+|\phi_{n+1}(x,t)-\phi_\infty(x,t)|.
\end{align*}
Letting $n\to\infty$, we get $\mathcal A\phi_\infty=\phi_\infty$, that is, $\phi_\infty$ satisfies \eqref{minf}.

\medskip
\noindent \textbf{Step 3.} 
We finally prove the uniqueness of $m$ which satisfies \eqref{minf}. 
Assume that there are two functions $m_1, m_2\in C(\R^n\times(0,T])$ satisfying
\[m_i(x,t)=c+\inf_{\gamma\in\cC(x,y;t,0)}\int_0^tL(\gamma(s),m_i(\gamma(s),s),\dot{\gamma}(s))\, ds,\quad i=1,2,\]
and also there is $(x,t)\in\R^n\times(0,T]$ such that $m_1(x,t)>m_2(x,t)$. Let $\eta:[0,t]\to\R^n$ be a minimizer of $m_2(x,t)$. 
Then, $s\mapsto L(\eta(s),m_2(\eta(s),s),\dot\eta(s))\in L^1([0,t])$, which implies that $s\mapsto L(\eta(s),m_1(\eta(s),s),\dot\eta(s))\in L^1([0,t])$ by \eqref{LK}. 
Thus, by the absolute continuity of the Lebesgue integral, $m_1(\eta(s),s)\to c$ and $m_2(\eta(s),s)\to c$ as $s\to 0$.  
Define 
\[
F(s):=m_1(\eta(s),s)-m_2(\eta(s),s)
\quad \text{for all} \ s\in[0,t]. 
\]
Since $F$ is continuous on $(0,t]$, $\lim_{s\to 0}F(s)=0$, and $F(t)>0$. By continuity, there is $\sigma\in[0,t)$ such that $F(\sigma)=0$ and $F(\tau)>0$ for all $\tau\in(\sigma,t]$.

Fix any $s\in(\sigma,t]$. Note that 
\[
m_1(\eta(s),s)
=c+\inf_{\eta\in\cC(\eta(s),y;s,0)}
\int_0^{s}L(\eta(\tau),m_1(\eta(\tau),\tau),\dot{\eta}(\tau))\,d\tau. 
\]
Take any $\alpha\in\cC(\eta(\sigma),y;\sigma,0)$, and set 
\[
\gamma(\tau):=
\left\{
\begin{array}{ll}
\alpha(\tau) & \text{for} \ \tau\in[0,\sigma], \\
\eta(\tau) & \text{for} \ \tau\in[\sigma,s]. 
\end{array}
\right. 
\]
Then, 
$\gamma\in\cC(\eta(s),y;s,0)$, which implies 
\begin{align*}
m_1(\eta(s),s)\le&\, 
c+\int_0^{s}L(\gamma(\tau),m_1(\gamma(\tau),\tau),\dot{\gamma}(\tau))\,d\tau\\
=&\, 
c
+\int_0^{\sigma}L(\alpha(\tau),m_1(\alpha(\tau),\tau),\dot{\gamma}(\alpha))\,d\tau
+\int_{\sigma}^{s}L(\eta(\tau),m_1(\eta(\tau),\tau),\dot{\eta}(\alpha))\,d\tau. 
\end{align*}
Taking the infimum on $\alpha\in\cC(\eta(\sigma),y;\sigma,0)$ yields 
\begin{align*}
m_1(\eta(s),s)\le&\, 
m_1(\eta(\sigma),\sigma)
+\int_{\sigma}^{s}L(\eta(\tau),m_1(\eta(\tau),\tau),\dot{\eta}(\tau))\,d\tau. 
\end{align*}
Noting that $\eta$ is a minimizer of $m_2(x,t)$, we have 
\[
m_2(\eta(s),s)
=
m_2(\eta(\sigma),\sigma)
+\int_{\sigma}^sL(\eta(\tau),m_2(\eta(\tau),\tau),\dot\eta(\tau))\, d\tau,
\]
which, together with $F(\sigma)=0$, implies that
\[F(s)\leq
\int_\sigma^s
L(\eta(\tau),m_1(\eta(\tau),\tau),\dot{\eta}(\tau))
-
L(\eta(\tau),m_2(\eta(\tau),\tau),\dot{\eta}(\tau))
\,d\tau
\le 
K\int_\sigma^sF(\tau)\, d\tau.\]
By the Gronwall inequality, we have $F(s)=0$ for all $s\in(\sigma,t]$ as $F(\sigma)=0$, which contradicts $F(t)>0$.
\end{proof}

Next, we assume (H5) and establish a representation formula for the solution to \eqref{e} with $\ep=1$, that is, the Cauchy problem 
\begin{numcases}
{}
u_t+H(x,u, Du)=0 \quad & \text{for} \ $x\in\R^n, t>0$, \label{eq:ini-1}\\
u(x,0)=\varphi(x) \quad & \text{for} \ $x\in\R^n$. \label{eq:ini-2}
\end{numcases}

\begin{lem}\label{lem:impli-rep-form}
There exists a continuous solution $u$ to \eqref{eq:ini-1}--\eqref{eq:ini-2} satisfying 
\begin{equation}\label{uinf}
u(x,t)=\inf_{\gamma\in\cC(x;t)}
\bigg\{\varphi(\gamma(0))+\int_0^t
L(\gamma(s),u(\gamma(s),s),\dot\gamma(s))\, ds\bigg\} \quad \text{for $(x,t)\in \R^n\times [0,\infty)$.}
\end{equation}
Here, 
\[
\cC(x;t):=\{\gam\in\AC([0,t];\R^n)\mid \gam(t)=x\}. 
\]
For $(x,t)\in \R^n\times [0,\infty)$, the infimum in \eqref{uinf} can be achieved. 
Let $\gamma:[0,t]\to \R^n$ be a minimizer of $u(x,t)$, then there is a constant $M_0>0$ depending on $H$ and $\|D\varphi\|_{L^\infty(\R^n)}$ such that 
\[
|x-\gamma(0)|\leq M_0t.
\]
\end{lem}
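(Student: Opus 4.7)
The plan is to set
\[
u(x,t) := \inf_{y\in\R^n} m(t,x,y,\varphi(y)) \quad \text{for } (x,t) \in \R^n \times (0,\infty),
\]
with $u(x,0) := \varphi(x)$, and to derive \eqref{uinf} together with the other claims from the dynamic programming of the fundamental solution $m$ provided by Theorem \ref{thm:fundamental-sol}. The key ingredient, obtained from \eqref{minf} via a concatenation argument together with the uniqueness half of Theorem \ref{thm:fundamental-sol}, is that along any minimizing curve $\gamma^* \in \cC(x,y;t,0)$ of $m(t,x,y,c)$, the function $s \mapsto m(s,\gamma^*(s),y,c)$ is the unique Herglotz integral along $\gamma^*$ starting at $c$:
\[
m(s,\gamma^*(s),y,c) = c + \int_0^s L\bigl(\gamma^*(\tau),m(\tau,\gamma^*(\tau),y,c),\dot\gamma^*(\tau)\bigr)\,d\tau
\quad \text{for all } s \in [0,t].
\]

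First I would prove the a priori bound $|x-\gamma(0)|\le M_0 t$ and existence of the outer minimizer. Testing against the constant curve $\gamma\equiv x$ and using $\sup_{y,r} L(y,r,0) < \infty$ (from (L1)--(L2)) gives the upper bound $u(x,t) \le \varphi(x) + C_0 t$. For the lower bound, (L2) yields $L(y,r,v) \ge L(y,0,v) - K$, and the superlinearity (L3) gives $L(y,0,v) \ge A|v| - C_A$ for any $A>0$ with $C_A$ independent of $(y,r)$. Combined with (H5) through $\varphi(y) \ge \varphi(x) - \|D\varphi\|_{L^\infty(\R^n)} |x-y|$ and $|x-y| \le \int_0^t |\dot\gamma|\,ds$, choosing $A = \|D\varphi\|_{L^\infty(\R^n)} + 1$ forces any $(y,\gamma)$ with value within a fixed constant of $u(x,t)$ to satisfy $\int_0^t |\dot\gamma|\,ds \le M_0 t$, hence $|x-y|\le M_0 t$, with $M_0$ depending only on $H$ and $\|D\varphi\|_{L^\infty(\R^n)}$. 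This localizes the outer infimum to $y \in \overline{B_{M_0 t}(x)}$; continuity of $(y,c)\mapsto m(t,x,y,c)$ and compactness then yield a minimizer $y^*$, and the associated minimizing curve $\gamma^*$ from Theorem \ref{thm:fundamental-sol} satisfies $|x-\gamma^*(0)|=|x-y^*|\le M_0 t$.

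Next I would verify \eqref{uinf} with $\gamma^*$ realizing the infimum. The crucial identity is $u(\gamma^*(s),s) = m(s,\gamma^*(s),y^*,\varphi(y^*))$ for $s\in(0,t]$: the inequality $\le$ is immediate by definition, while a strict inequality at some $s_0 \in (0,t)$ would produce $y'$ with $m(s_0,\gamma^*(s_0),y',\varphi(y')) < m(s_0,\gamma^*(s_0),y^*,\varphi(y^*))$; concatenating a minimizer from $y'$ to $\gamma^*(s_0)$ with $\gamma^*|_{[s_0,t]}$ and invoking Herglotz-ODE comparison on $[s_0,t]$, based on the $K$-Lipschitz property of $L$ in $r$ from (L2), propagates the strict inequality to $\tau = t$, contradicting optimality of $y^*$ for $u(x,t)$. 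Combining this identity with the displayed dynamic programming for $m$ along $\gamma^*$ gives
\[
u(x,t) = \varphi(y^*) + \int_0^t L\bigl(\gamma^*(s),u(\gamma^*(s),s),\dot\gamma^*(s)\bigr)\,ds,
\]
which proves the $\ge$ direction of \eqref{uinf} together with attainment at $\gamma^*$. The $\le$ direction is handled by running a Picard iteration on $\cB\psi(x,t) := \inf_{\gamma \in \cC(x;t)} \{\varphi(\gamma(0)) + \int_0^t L(\gamma,\psi(\gamma,\cdot),\dot\gamma)\,ds\}$ exactly as in the proof of Theorem \ref{thm:fundamental-sol} to produce a unique continuous fixed point of $\cB$, which is then identified with $u$ by uniqueness for both fixed-point characterizations.

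Continuity of $u$ then follows from continuity of $m$ and the compact localization of the outer infimum, and the viscosity solution property for \eqref{eq:ini-1}--\eqref{eq:ini-2} is a standard consequence of \eqref{uinf} via dynamic programming. I expect the main obstacle to be the Herglotz-ODE comparison behind the identity $u(\gamma^*(s),s) = m(s,\gamma^*(s),y^*,\varphi(y^*))$: one must propagate a strict inequality along $\gamma^*$ despite having only a $K$-Lipschitz (not monotone) dependence of $L$ on $r$, which rules out an elementary maximum principle and forces a careful Gronwall-type argument at the level of the Herglotz integrals together with the upper bound $m(t,x,y,c) \le f_\gamma(t)$ for the Herglotz integral $f_\gamma$ along an arbitrary admissible $\gamma$, itself extracted from \eqref{minf}.
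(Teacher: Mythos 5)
Your plan ultimately rests on the same engine as the paper's proof: the Picard iteration on the free-endpoint operator $\cB$, which you invoke verbatim for the ``$\le$'' direction. The detour through $u(x,t):=\inf_{y}m(t,x,y,\varphi(y))$ is where the gaps sit. First, your existence of the outer minimizer $y^*$ (and your continuity of $u$) relies on continuity of $(y,c)\mapsto m(t,x,y,c)$, or at least of $y\mapsto m(t,x,y,\varphi(y))$. Theorem \ref{thm:fundamental-sol} only gives continuity in $(x,t)$ for \emph{fixed} $(y,c)$, and joint continuity in the initial data is not a soft fact here: comparing $m(t,x,y,\cdot)$ with $m(t,x,y',\cdot)$ requires a curve surgery near $s=0$ in which the implicit (Herglotz) value must be tracked, and the minimizer of $m(t,x,y,\varphi(y))$ need not be Lipschitz near $s=0$, so the obvious concatenation/reparametrization estimates do not close without additional work. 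Since your attainment claim for \eqref{uinf} is routed entirely through $y^*$ and $\gamma^*$, this unproved continuity is load-bearing. Second, the final step ``identified with $u$ by uniqueness for both fixed-point characterizations'' is not justified at that stage: uniqueness of continuous functions satisfying \eqref{uinf} is not part of Theorem \ref{thm:fundamental-sol} and is not delivered by the Picard construction itself, and your $u$ is not yet known to satisfy \eqref{uinf} when you appeal to it. Identifying the fixed point of $\cB$ with $\inf_y m(t,x,y,\varphi(y))$ is precisely the content of the paper's later Theorem \ref{thm:rep} and needs its own Gronwall/concatenation argument, not a one-line uniqueness appeal.

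By contrast, the paper proves the lemma without ever mentioning $m$: it runs the iteration $u_{n+1}=\cB u_n$, confines the initial endpoints of near-minimizers of each iterate to $|x-\gamma(0)|\le M_0t$ exactly by the coercivity computation you describe (constant test curve plus (L2)--(L3) with $A=1+\|D\varphi\|_{L^\infty(\R^n)}$), gets existence of minimizers for each iterate and for the limit from the direct method using (L3)--(L4), obtains the factorial bound $|u_{n+1}-u_n|\le K^nt^n/n!$ and hence a continuous fixed point, and then notes the viscosity-solution property is standard. Your a priori estimate and your Herglotz/Gronwall comparison (including the propagation of strict inequality, which does work by backward Gronwall since both functions solve the same Carath\'eodory ODE with $K$-Lipschitz dependence on the state) are fine in themselves, but they prove material belonging to Theorem \ref{thm:rep}; for this lemma you should either supply the missing continuity of $m$ in $(y,c)$ and a genuine identification argument, or drop the $m$-detour and obtain continuity, attainment, and the $M_0$ bound directly at the level of the fixed point of $\cB$, as the paper does.
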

\begin{proof}
We first show there exists a continuous function $u$ satisfying \eqref{uinf}. Define $u_0=0$ and for $n\in\mathbb N$, we consider the following sequence
\[u_{n+1}(x,t)=\inf_{\gamma\in\cC(x;t)}
\bigg\{\varphi(\gamma(0))+\int_0^t
L(\gamma(s),u_n(\gamma(s),s),\dot\gamma(s))\, ds\bigg\}.\]
By (L3), there is $K_1>0$ depending on $\|D\varphi\|_{L^\infty(\R^n)}$ such that 
\[
L(y,0,v)
\ge 
(1+\|D\varphi\|_{L^\infty(\R^n)})|v|-K_1\quad
\text{for all} \ y\in\R^n,\ v\in\R^n.
\]
The minimizers of $u_{n+1}(x,t)$ are contained in the set of curves $\gamma\in\cC(x;t)$ satisfying
\[u_{n+1}(x,t)+1\geq \varphi(\gamma(0))+\int_0^t
L(\gamma(s),u_n(\gamma(s),s),\dot\gamma(s))\, ds.\]
By taking the constant curve equaling $x$, we also have
\[u(x,t)+1\leq \varphi(x)+\Big(\sup_{y\in\R^n}L(y,0,0)+K\Big)t+1,\]
which implies
\begin{align*}
\varphi(x)+\Big(\sup_{y\in\R^n}L(y,0,0)+K\Big)t+1
&\geq \varphi(\gamma(0))+\int_0^t
L(\gamma(s),u_n(\gamma(s),s),\dot\gamma(s))\, ds
\\ &\geq \varphi(\gamma(0))+(1+\|D\varphi\|_{L^\infty(\R^n)})|x-\gamma(0)|-(K_1+K)t.
\end{align*}
Since $\varphi(x)-\varphi(\gamma(0))\leq \|D\varphi\|_{L^\infty(\R^n)}|x-\gamma(0)|$, we conclude
\[|x-\gamma(0)|\leq \Big(\sup_{y\in\R^n}L(y,0,0)+K_1+2K\Big)t+1.\]
Since $\gamma(0)$ is bounded, by the classical result of a one-dimensional variational principle, we know the existence of minimizers of $u_{n+1}(x,t)$. Then, as in the proof of Theorem \ref{thm:fundamental-sol}, Step 2, we can show that
\[
|u_{n+1}(t,x)-u_n(t,x)|\leq \frac{K^n}{n!}t^n
\quad\text{for all} \ n\in\N,
\]
and that $u_n$ converges to a continuous function $u(x,t)$ uniformly, and $u(x,t)$ satisfies \eqref{uinf}. 
By the optimal control formula of viscosity solutions of convex Hamilton--Jacobi equations, it is rather standard to prove that $u$ is a viscosity solution to \eqref{eq:ini-1}--\eqref{eq:ini-2}. Note that since $L$ satisfies (L3), (L4), there exists a minimizer of $u(x,t)$. Moreover, as argued above, there is a constant $M_0$ depending on $L$ and $\|D\varphi\|_{L^\infty(\R^n)}$ such that the minimizers $\gamma$ of $u(x,t)$ satisfies $|x-\gamma(0)|\leq M_0t$.
\end{proof}

The following lemma demonstrates the failure of non-expansiveness for the solution to \eqref{eq:ini-1}--\eqref{eq:ini-2} with respect to the initial data, which presents a major difficulty in this work.

\begin{lem}
Let $\varphi_1,\varphi_2\in\Lip(\R^n)$.
For $i=1,2$, let $u_i$ satisfy the formula \eqref{uinf} with $\varphi_i$ in place of $\varphi$. 
If $\|\varphi_1-\varphi_2\|_{L^\infty(\R^n)}<\infty$, then we have
\begin{equation}\label{exp}
\|(u_1-u_2)(\cdot,t)\|_{L^\infty(\R^n)}\leq  \|\varphi_1-\varphi_2\|_{L^\infty(\R^n)}e^{Kt} \quad \text{ for all } t\geq 0.
\end{equation}
\end{lem}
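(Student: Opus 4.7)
The plan is to prove \eqref{exp} by a shifted comparison argument at the PDE level, invoking Lemma \ref{lem:CP}. By Lemma \ref{lem:impli-rep-form}, both $u_1$ and $u_2$ are continuous viscosity solutions of the Cauchy problem \eqref{eq:ini-1}--\eqref{eq:ini-2} with initial data $\varphi_1$ and $\varphi_2$ respectively; a Gronwall bootstrap on the variational formula using the constant curve $\gamma(s)\equiv x$ together with (L1), (L2) shows $|u_i(x,t)|\leq(|\varphi_i(x)|+\tilde C t)e^{Kt}$ for some constant $\tilde C$, so both $u_i\in\AL(\R^n\times[0,\infty))$ and Lemma \ref{lem:CP} applies.

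Next, I would set $C(t):=\|\varphi_1-\varphi_2\|_{L^\infty(\R^n)}e^{Kt}$, chosen so that $C'(t)=KC(t)$, and define the upper barrier $\psi(x,t):=u_2(x,t)+C(t)$. At $t=0$ one has $\psi(\cdot,0)=\varphi_2+\|\varphi_1-\varphi_2\|_{L^\infty(\R^n)}\geq\varphi_1=u_1(\cdot,0)$, so the task reduces to showing that $\psi$ is a viscosity supersolution of $u_t+H(x,u,Du)=0$. For that I would take a smooth test function $\phi$ touching $\psi$ from below at $(x_0,t_0)$; then $\phi-C$ touches $u_2$ from below there, and the supersolution property of $u_2$ yields
\[
\phi_t(x_0,t_0)-C'(t_0)+H\bigl(x_0,u_2(x_0,t_0),D\phi(x_0,t_0)\bigr)\geq 0.
\]
Since $u_2(x_0,t_0)=\psi(x_0,t_0)-C(t_0)$, (H2) gives $H(x_0,u_2,D\phi)\leq H(x_0,\psi,D\phi)+KC(t_0)$, and combining with $C'(t_0)=KC(t_0)$ one obtains $\phi_t(x_0,t_0)+H(x_0,\psi(x_0,t_0),D\phi(x_0,t_0))\geq 0$, as required. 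Since $\psi\in\AL(\R^n\times[0,\infty))$, Lemma \ref{lem:CP} then forces $u_1\leq\psi$, i.e., $u_1-u_2\leq\|\varphi_1-\varphi_2\|_{L^\infty(\R^n)}e^{Kt}$, and swapping the roles of $u_1$ and $u_2$ delivers the reverse inequality.

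The main technical point will be the precise matching $C'(t)=KC(t)$: this is exactly what makes the time-derivative correction $-C'(t_0)$ and the Lipschitz shift $+KC(t_0)$ in the $r$-argument cancel, rigidly producing the factor $e^{Kt}$. The same calculation pinpoints why this factor cannot be improved to anything subexponential without strengthening (H2) to a one-sided monotonicity on $r\mapsto H(y,r,p)$, reflecting precisely the failure of non-expansiveness emphasized in the introduction. A purely variational proof via Gronwall along a minimizer $\gamma_2$ of $u_2(x,t)$ also appears natural, but it yields only a one-sided inequality $F(s)\leq\|\varphi_1-\varphi_2\|_{L^\infty(\R^n)}+K\int_0^s|F(\tau)|\,d\tau$ for $F(s):=u_1(\gamma_2(s),s)-u_2(\gamma_2(s),s)$, since the optimal curve for $u_1$ is different from $\gamma_2$; the PDE-level shift bypasses this asymmetry.
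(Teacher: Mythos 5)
Your supersolution computation for $\psi(x,t):=u_2(x,t)+\|\varphi_1-\varphi_2\|_{L^\infty(\R^n)}e^{Kt}$ is correct as far as it goes, but the proof has a genuine gap at the comparison step. Lemma \ref{lem:CP} requires, in addition to the $\AL$ bound, that \emph{either the subsolution or the supersolution be Lipschitz continuous}, and you never verify this hypothesis. At the point where this lemma sits, neither $u_1$ nor $u_2$ is known to be Lipschitz: the Lipschitz regularity of the value function is only established afterwards, in Lemma \ref{lem:u-optimal-control}, and its proof uses precisely the estimate \eqref{exp} you are trying to prove (it is \eqref{exp} that yields \eqref{eks} and hence the bound on $u_t$, from which $\|Du\|_{L^\infty}$ follows via (H3)). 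So invoking that regularity here would be circular, and without it the appeal to Lemma \ref{lem:CP} does not go through as stated. A smaller point in the same direction: the statement only assumes $u_i$ satisfies the implicit formula \eqref{uinf}, so to argue at the PDE level you must also pass through Lemma \ref{lem:impli-rep-form} to know the $u_i$ are viscosity solutions, and Lemma \ref{lem:CP} is phrased for a sub/supersolution pair of the Cauchy problem with the \emph{same} initial datum, so your application with ordered data $\varphi_1\le\varphi_2+\|\varphi_1-\varphi_2\|_{L^\infty(\R^n)}$ needs at least a remark (this part is routine, unlike the Lipschitz hypothesis).

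The paper avoids all of this by staying entirely at the variational level: fix $(x,t)$ with $u_1(x,t)>u_2(x,t)$, take a minimizer $\gamma$ of $u_2(x,t)$, and set $F(s)=u_1(\gamma(s),s)-u_2(\gamma(s),s)$. The obstruction you cite against this route — that one only gets $F(s)\le\|\varphi_1-\varphi_2\|_{L^\infty(\R^n)}+K\int_0^s|F|\,d\tau$ — is exactly what the paper's first step removes: if $F$ vanished at some $\sigma\in[0,t)$, then comparing the inequality for $u_1$ (restricting curves through $\gamma(\sigma)$) with the equality for $u_2$ along its own minimizer gives $F(s)\le K\int_\sigma^s F(\tau)\,d\tau$ on the interval where $F>0$, so Gronwall forces $F\equiv0$ there, contradicting $F(t)>0$. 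Hence $F>0$ on all of $[0,t]$, the absolute value disappears, and the Gronwall inequality with the initial contribution $\varphi_1(\gamma(0))-\varphi_2(\gamma(0))$ yields \eqref{exp} directly, with no viscosity-solution machinery, no comparison principle, and no regularity beyond continuity. If you want to keep your barrier argument, you must supply an independent Lipschitz (or otherwise comparison-compatible) estimate for $u_1$ or $u_2$ that does not rely on \eqref{exp}; otherwise the variational route is the one that closes.
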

\begin{proof}
Fix $(x,t)\in \R^n\times [0,\infty)$.
We assume $u_1(x,t)>u_2(x,t)$. 
Let $\gamma$ be a minimizer of $u_2(x,t)$. If there is $\sigma\in[0,t)$ such that $u_1(\gamma(\sigma),\sigma)=u_2(\gamma(\sigma),\sigma)$, then for all $s\in(\sigma,t]$, 
\[u_1(\gamma(s),s)\leq u_1(\gamma(\sigma),\sigma)+\int_\sigma^sL(\gamma(\tau),u_1(\gamma(\tau),\tau),\dot{\gamma}(\tau))\, d\tau,\]
and
\[u_2(\gamma(s),s)=u_2(\gamma(\sigma),\sigma)+\int_\sigma^sL(\gamma(\tau),u_2(\gamma(\tau),\tau),\dot{\gamma}(\tau))\,  d\tau.\]
Define
\[F(s):=u_1(\gamma(s),s)-u_2(\gamma(s),s).\]
Without loss of generality, we can assume $F(s)>0$ for $s\in (\sigma,t]$.
Since $F(\sigma)=0$, we have
\[F(s)\leq K\int_\sigma^sF(\tau)\, d\tau.\]
By the Gronwall inequality, we have $F(s)=0$ for all $s\in(\sigma,t]$, which leads to a contradiction. Thus, $u_1(\gamma(s),s)>u_2(\gamma(s),s)$ for all $s\in[0,t]$. For all $s\in[0,t]$, 
\[u_1(\gamma(s),s)\leq \varphi_1(\gamma(0))+\int_0^sL(\gamma(\tau),u_1(\gamma(\tau),\tau),\dot{\gamma}(\tau))\, d\tau,\]
and
\[u_2(\gamma(s),s)=\varphi_2(\gamma(0))+\int_0^sL(\gamma(\tau),u_2(\gamma(\tau),\tau),\dot{\gamma}(\tau))\,  d\tau.\]
Then,
\[F(s)\leq \varphi_1(\gamma(0))-\varphi_2(\gamma(0))+K\int_0^sF(\tau)\, d\tau.\]
By the Gronwall inequality, we have
\[u_1(\gamma(s),s)-u_2(\gamma(s),s)\leq \|\varphi_1-\varphi_2\|_{L^\infty(\R^n)}e^{Ks}.\]
Setting $s=t$, and by symmetry, we get the conclusion.
\end{proof}

\begin{lem}\label{lem:u-optimal-control}
Let $u$ be the unique solution of \eqref{eq:ini-1}--\eqref{eq:ini-2} in $\AL(\R^n\times[0,\infty))$, and then $u$ satisfies \eqref{uinf}, 
and furthermore $u\in \Lip(\R^n\times [0,T])$ for every $T>0$.
\end{lem}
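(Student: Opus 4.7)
My plan is to build a candidate for $u$ from Lemma \ref{lem:impli-rep-form}, verify the growth condition via a viscosity barrier argument, identify it with $u$ through the uniqueness clause of Lemma \ref{lem:CP}, and then upgrade its regularity to Lipschitz via a direct curve modification in the variational formula. First, Lemma \ref{lem:impli-rep-form} produces a continuous viscosity solution $\tilde u$ of \eqref{eq:ini-1}--\eqref{eq:ini-2} satisfying the implicit formula \eqref{uinf}, with every minimizer $\gamma$ obeying $|x - \gamma(0)| \le M_0 t$. Setting $L_\varphi := \|D\varphi\|_{L^\infty(\R^n)}$ and
\[
C_0 := \sup\bigl\{|H(y,r,p)| : (y,r)\in\R^n\times\R,\ |p|\le L_\varphi\bigr\},
\]
the continuity of $H$ together with its $\Z^{n+1}$-periodicity in $(y,r)$ forces $C_0 < \infty$. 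Hence $\varphi(x) \pm C_0 t$ are viscosity super- and subsolutions of \eqref{eq:ini-1} that coincide with $\varphi$ at $t = 0$, so Lemma \ref{lem:CP} yields $|\tilde u(x,t) - \varphi(x)| \le C_0 t$, which together with the Lipschitz property of $\varphi$ gives $\tilde u \in \AL(\R^n \times [0, \infty))$. The uniqueness part of Lemma \ref{lem:CP} then forces $u = \tilde u$, so $u$ satisfies \eqref{uinf}.

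For the Lipschitz estimate I would sharpen the barrier: for each fixed $y_0 \in \R^n$, the function $v^{y_0}(x,t) := \varphi(y_0) + L_\varphi|x - y_0| + C_0 t$ is a supersolution (the kink at $x = y_0$ is compatible with the supersolution test, since every $p \in \partial^- v^{y_0}(y_0,t)$ satisfies $|p| \le L_\varphi$) whose initial value dominates $\varphi$ by the Lipschitz hypothesis on $\varphi$, so Lemma \ref{lem:CP} gives $u(x,t) \le \varphi(y_0) + L_\varphi|x - y_0| + C_0 t$, together with a symmetric lower bound. To upgrade the crude consequence $|u(x,t) - u(y,t)| \le L_\varphi|x-y| + 2C_0 t$ to a genuine Lipschitz-in-$x$ bound, I would appeal directly to \eqref{uinf}: given a minimizer $\gamma^*$ for $u(x,t)$, build a competitor for $u(y,t)$ by leaving $\gamma^*$ unchanged on $[0, t-\delta]$ and linearly interpolating from $\gamma^*(t-\delta)$ to $y$ on $[t-\delta, t]$, with $\delta$ chosen proportional to $|x-y|$. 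Because $L$ is $\Z^{n+1}$-periodic in $(y,r)$ by (L1)--(L2), the cost integrand stays uniformly bounded on the modified segment even though $|u|$ itself grows linearly in $x$, and this will yield $|u(x,t) - u(y,t)| \le C|x-y|$ with $C$ independent of $x$. Lipschitz in $t$ then follows from the PDE: once $|Du| \le C$ a.e., the same periodicity gives $|u_t| = |H(x, u, Du)| \le \sup\{|H(y,r,p)| : |p| \le C\} < \infty$, and combining the two one-variable estimates proves $u \in \Lip(\R^n \times [0,T])$.

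The hard part will be controlling the inserted velocity on $[t-\delta, t]$. A priori, the superlinearity (L3) together with the endpoint bound from Lemma \ref{lem:impli-rep-form} gives only an integral energy bound of the form $\int_0^t \theta(|\dot\gamma^*|)\,ds = O(t)$, not a pointwise speed bound, so $|x - \gamma^*(t-\delta)|$ is not automatically of order $\delta$. The resolution will be to apply the dynamic programming principle to the restriction $\gamma^*|_{[t-\delta, t]}$ and combine it with the crude barrier bound of the preceding paragraph to rerun the endpoint estimate of Lemma \ref{lem:impli-rep-form} on this short time interval, which yields $|x - \gamma^*(t-\delta)| \le M \delta$ and closes the curve-modification argument.
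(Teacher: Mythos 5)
The first half of your plan (identify the value function from Lemma \ref{lem:impli-rep-form} with $u$ via Lemma \ref{lem:CP}) is essentially the paper's route, but note a small circularity: you invoke Lemma \ref{lem:CP} against the barriers $\varphi\pm C_0t$ \emph{in order to conclude} $\tilde u\in\AC$... rather, $\tilde u\in \AL(\R^n\times[0,\infty))$, whereas membership in $\AL$ is a hypothesis of Lemma \ref{lem:CP}. This is harmless and easily repaired: the linear growth of $\tilde u$ follows directly from the formula \eqref{uinf} together with the endpoint bound $|x-\gamma(0)|\le M_0t$ (this is exactly how the paper begins the proof), and only afterwards does one compare with the Lipschitz barriers to get $|\tilde u(x,t)-\varphi(x)|\le C_0 t$.

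The genuine gap is in the Lipschitz step, precisely at the point you flag as the hard part. Your proposed resolution --- apply the dynamic programming identity to $\gamma^*|_{[t-\delta,t]}$ and rerun the endpoint argument of Lemma \ref{lem:impli-rep-form} using the crude barrier bound --- does not yield $|x-\gamma^*(t-\delta)|\le M\delta$. Running that computation, the superlinearity gives $u(x,t)-u(\gamma^*(t-\delta),t-\delta)\ge (1+\|D\varphi\|_{L^\infty})|x-\gamma^*(t-\delta)|-C\delta$, but the only available upper bound for the left-hand side comes from $|u(\cdot,s)-\varphi|\le C_0 s$ at the two times $t$ and $t-\delta$, which produces an additive error of order $t$, not $\delta$; the conclusion is $|x-\gamma^*(t-\delta)|\le C(t+\delta)$, useless when $\delta\propto|x-y|$ is small and $t$ is not. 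Rerunning Lemma \ref{lem:impli-rep-form} on $[t-\delta,t]$ would require the ``initial datum'' $u(\cdot,t-\delta)$ to be Lipschitz with a known constant, which is exactly the estimate you are trying to prove, so the argument is circular. Moreover, if your curve-modification did close, it would give a spatial Lipschitz constant independent of $t$, which is stronger than what the paper obtains and which the authors indicate is out of reach under (H1)--(H5) alone (they only prove H\"older regularity for $t>1$, under the additional assumption (H6) and using the full homogenization rate of Theorem \ref{thm1}). The paper's proof goes the other way around: it first bounds $|u_t|$ by $e^{KT}M$ on $\R^n\times[0,T]$, combining the barrier estimate $\|u(\cdot,s)-\varphi\|_{L^\infty}\le Ms$ with the exponential stability estimate \eqref{exp} applied to the initial data $u(\cdot,s)$ and $\varphi$ (semigroup property), and only then deduces $\|Du\|_{L^\infty(\R^n\times[0,T])}\le C(T,H)$ from the equation and the coercivity implied by (H3); the resulting Lipschitz constant deteriorates as $T\to\infty$, which is consistent with the obstruction your route runs into.
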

\begin{proof}
Let $\gamma$ be a minimizer of $u(x,t)$ with $\gam(t)=x$ given by Lemma \ref{lem:impli-rep-form}. 
Then, we see that 
\begin{align*}
&\varphi(x)+\Big(\sup_{y\in\R^n}L(y,0,0)+K\Big)t
\geq u(x,t)\geq \varphi(\gamma(0))+\Big(\inf_{y\in\R^n,\,v\in\R^n}L(y,0,v)-K\Big)t.
\end{align*}
Moreover, as $|x-\gamma(0)|\leq M_0t$, we have
$
|\varphi(\gamma(0))-\varphi(x)|
\le\|D\varphi\|_{L^\infty(\R^n)}|x-\gamma(0)|
\le \|D\varphi\|_{L^\infty(\R^n)}M_0t, 
$
which implies 
\begin{align*}
u(x,t)&\le\, 
\varphi(x)+\Big(\sup_{y\in\R^n}L(y,0,0)+K\Big)t\\
&\le\, 
\|D\varphi\|_{L^\infty(\R^n)}M_0t
+|\varphi(\gamma(0))|
+\Big(\sup_{y\in\R^n}L(y,0,0)+K\Big)t\\
&\le\, 
\|D\varphi\|_{L^\infty(\R^n)}(2M_0t+|x|)
+|\varphi(0)|
+\Big(\sup_{y\in\R^n}L(y,0,0)+K\Big)t,  
\end{align*}
and 
\[
u(x,t)\ge 
\varphi(x)-\|D\varphi\|_{L^\infty(\R^n)}M_0t
+\Big(\inf_{y\in\R^n,\,v\in\R^n}L(y,0,v)-K\Big)t, 
\]
which implies that $u(x,t)$ has at most linear growth in $x$.

Let 
\[M:=\sup_{y\in\R^n,\, r\in\R,\, |p|\leq \|D\varphi\|_{L^\infty(\R^n)}}|H(y,r,p)|<\infty.\] 
One can check that $\varphi(x)\pm Mt$ are a subsolution and a supersolution to \eqref{eq:ini-1}--\eqref{eq:ini-2}, respectively. By the comparison principle, Lemma \ref{lem:CP}, we have
\[
\varphi(x)-Mt\leq u(x,t) \leq \varphi(x)+Mt \quad \text{ for all } (x,t)\in \R^n\times [0,\infty).
\]
In particular,
\[\|u(\cdot,s)-\varphi\|_{L^\infty(\R^n)}=\|u(\cdot,s)-u(\cdot,0)\|_{L^\infty(\R^n)}\leq Ms\quad \text{ for } s>0.\]
Then by \eqref{exp},
\begin{equation}\label{eks}
\|u(\cdot,t+s)-u(\cdot,t)\|_{L^\infty(\R^n)}\leq e^{Kt}\|u(\cdot,s)-u(\cdot,0)\|_{L^\infty(\R^n)}\leq e^{Kt}Ms\quad \text{for } t,s>0.
\end{equation}
Hence, for each $T>0$,
\[
\|u_t\|_{L^\infty(\R^n\times [0,T])}\leq e^{KT}M.
\]
By (H3),
\[
\|Du\|_{L^\infty(\R^n\times [0,T])}\leq C=C(T,H).
\]
Thus, $u\in \Lip(\R^n\times [0,T])$ for every $T>0$. 
By Lemma \ref{lem:CP} again, we know that $u$ is the unique solution to \eqref{eq:ini-1}--\eqref{eq:ini-2} in $\AL(\R^n\times[0,\infty))$.
\end{proof}

\begin{thm}\label{thm:rep} 
There exists a unique solution $u\in \AL(\R^n\times[0,\infty))$ to \eqref{eq:ini-1}--\eqref{eq:ini-2} represented by 
\begin{equation}\label{eq:u-m}
u(x,t)=\inf_{y\in\R^n}m(t,x,y,\varphi(y))
\quad\text{for all} \ (x,t)\in\R^n\times[0,\infty), 
\end{equation}
where $m$ is the fundamental solution to \eqref{eq:ini-1} given by 
Theorem {\rm\ref{thm:fundamental-sol}}. 
Moreover, the infimum is achieved at $y=\gamma(0)$, where 
$\gamma\in\cC(x;t)$ is a minimizer of \eqref{uinf}. 
\end{thm}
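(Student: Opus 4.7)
The plan is to prove the representation \eqref{eq:u-m} by a two-sided inequality: first $u(x,t)\leq m(t,x,y,\varphi(y))$ for every $y\in\R^n$, and then $u(x,t)\geq m(t,x,y^*,\varphi(y^*))$ with $y^*:=\gamma^*(0)$, where $\gamma^*\in\cC(x;t)$ is a minimizer of $u(x,t)$ supplied by Lemma \ref{lem:u-optimal-control}. Existence, uniqueness in $\AL(\R^n\times[0,\infty))$, and the variational formula \eqref{uinf} for $u$ are already contained in Lemmas \ref{lem:u-optimal-control} and \ref{lem:CP}. The central difficulty is that $L$ is only $K$-Lipschitz and not monotone in $r$ (in fact $\Z$-periodic), so a naive estimate of the form $F(s)\leq K\int_0^s|F|\,d\tau$ for a signed error $F$ with $F(0)=0$ gives no sign information. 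The remedy, borrowed directly from Step~3 of the proof of Theorem \ref{thm:fundamental-sol}, is to localize past the last sign change of $F$: on such an interval $|F|$ becomes one-signed and Gronwall from a vanishing initial condition closes the gap.

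For the upper bound I would fix $y\in\R^n$, pick a minimizer $\tilde\gamma\in\cC(x,y;t,0)$ of $m(t,x,y,\varphi(y))$ given by Theorem \ref{thm:fundamental-sol}, and set $M(s):=m(s,\tilde\gamma(s),y,\varphi(y))$, $U(s):=u(\tilde\gamma(s),s)$, so $M(0)=U(0)=\varphi(y)$. Dynamic programming for $m$ along its minimizer gives the exact identity $M(s)-M(\sigma)=\int_\sigma^s L(\tilde\gamma,M,\dot{\tilde\gamma})\,d\tau$, while splitting an arbitrary $u$-competitor at time $\sigma$ and taking the infimum on its initial piece (exactly as in the proof of Theorem \ref{thm:fundamental-sol}, Step~3) produces the one-sided $U(s)-U(\sigma)\leq\int_\sigma^s L(\tilde\gamma,U,\dot{\tilde\gamma})\,d\tau$. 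Subtracting and invoking (L2), $F:=M-U$ satisfies $F(s)-F(\sigma)\geq -K\int_\sigma^s|F|\,d\tau$ and $F(0)=0$. If $F(t)<0$, I set $\sigma:=\sup\{s\in[0,t]:F(s)\geq 0\}$; continuity forces $F(\sigma)=0$ and $F<0$ on $(\sigma,t]$, so $G:=-F\geq 0$ satisfies $G(\sigma)=0$ and $G(s)\leq K\int_\sigma^s G(\tau)\,d\tau$, whence Gronwall gives $G\equiv 0$ on $[\sigma,t]$, a contradiction. Thus $u(x,t)\leq m(t,x,y,\varphi(y))$ for every $y$, and $u(x,t)\leq\inf_y m(t,x,y,\varphi(y))$ follows.

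For the matching lower bound the roles of the two functions reverse. With $\gamma^*\in\cC(x;t)$ a minimizer of $u(x,t)$ and $y^*:=\gamma^*(0)$, I would set $U(s):=u(\gamma^*(s),s)$ and $M(s):=m(s,\gamma^*(s),y^*,\varphi(y^*))$; now the equality $U(s)-U(\sigma)=\int_\sigma^s L(\gamma^*,U,\dot\gamma^*)\,d\tau$ comes from the optimality of $\gamma^*|_{[0,s]}$ for $u(\gamma^*(s),s)$ (standard dynamic programming for $u$), whereas $\gamma^*|_{[0,s]}\in\cC(\gamma^*(s),y^*;s,0)$ is only a candidate in the $m$-problem, so splitting at $\sigma$ and passing to the infimum on the initial piece yields the one-sided $M(s)-M(\sigma)\leq\int_\sigma^s L(\gamma^*,M,\dot\gamma^*)\,d\tau$. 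The identical Gronwall argument applied to $F:=U-M$ (which obeys $F(s)-F(\sigma)\geq -K\int_\sigma^s|F|\,d\tau$ with $F(0)=0$) produces $F(t)\geq 0$, hence $u(x,t)\geq m(t,x,y^*,\varphi(y^*))\geq\inf_y m(t,x,y,\varphi(y))$. Combining the two bounds gives \eqref{eq:u-m}, and the chain of equalities shows that the infimum is attained at $y^*=\gamma^*(0)$, as claimed.
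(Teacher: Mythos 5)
Your proposal is correct and rests on the same core mechanism as the paper: the Gronwall comparison past the last sign change of the error, exactly as in Step~3 of the proof of Theorem \ref{thm:fundamental-sol}. Your upper bound $u(x,t)\le m(t,x,y,\varphi(y))$ is the paper's contradiction argument run directly (Gronwall along the $m$-minimizer for the chosen $y$, combining the one-sided dynamic programming inequality for $u$ with the exact identity for $m$ along its own minimizer). Where you genuinely differ is the other half: the paper disposes of it in one line, asserting $u(x,t)=m(t,x,\gamma(0),\varphi(\gamma(0)))$ ``by the uniqueness of continuous functions satisfying \eqref{minf}'', whereas you prove the inequality $u(x,t)\ge m(t,x,y^*,\varphi(y^*))$ by the symmetric Gronwall argument along the $u$-minimizer $\gamma^*$, using the exact identity for $u$ along $\gamma^*$ and the one-sided inequality for $m$ obtained by splitting at $\sigma$ and taking the infimum over the initial piece. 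Both ingredients are used elsewhere in the paper at the same level of rigor (the exact identity along a $u$-minimizer appears in the proof of \eqref{exp}, the concatenation-plus-infimum inequality for $m$ in Step~3), so your route is legitimate and arguably more transparent than the appeal to uniqueness. One detail you should spell out in the lower bound: the Gronwall step needs $\lim_{s\to 0^+} m(s,\gamma^*(s),y^*,\varphi(y^*))=\varphi(y^*)$, and since $\gamma^*$ is not an $m$-minimizer the paper's absolute-continuity argument does not apply verbatim; it does follow from the one-sided inequality $m(s,\gamma^*(s),y^*,\varphi(y^*))\le \varphi(y^*)+\int_0^s\bigl(L(\gamma^*(\tau),0,\dot\gamma^*(\tau))+K\bigr)\,d\tau$, whose integrand lies in $L^1([0,t])$ because $\gamma^*$ has finite action, together with the uniform lower bound $L(y,r,v)\ge \min_{z,w}L(z,0,w)-K$, which gives $m(s,z,y^*,c)\ge c-Cs$ for every $z$.
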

\begin{proof}
By Lemma \ref{lem:u-optimal-control}, the value function $u$ given by \eqref{uinf} is the unique solution of \eqref{eq:ini-1}--\eqref{eq:ini-2} in $\AL(\R^n\times[0,\infty))$.
It remains to show that $u$ satisfies \eqref{eq:u-m}.

Fix $(x,t)\in \R^n\times [0,\infty)$.
We first consider the case where $t>0$. 
Let $\gamma:[0,t]\to\R^n$ be a minimizer of $u(x,t)$ in \eqref{uinf}. 
By the uniqueness of continuous functions satisfying \eqref{minf}, 
we have $u(x,t)=m(t,x,\gamma(0),\varphi(\gamma(0)))$. 
Here, by contradiction, assume that 
\[
\inf_{y\in\R^n}m(t,x,y,\varphi(y))<u(x,t). 
\]
Then, there is $\bar y\in\R^n$ such that
$m(t,x,\bar y,\varphi(\bar y))<u(x,t)$. 
Take a minimizer $\xi\in\cC(x,\bar{y};t,0)$ of $m(t,x,\bar y,\varphi(\bar y))$. 
It is clear to see $u(\xi(0),0)=u(\bar{y},0)=\varphi(\bar{y})$. 
Moreover, noting that $s\mapsto L(\xi(s),m(s,\xi(s),\bar y,\varphi(\bar y)),\dot \xi(s))\in L^1([0,t])$ and
\[m(t,\xi(t),\bar y,\varphi(\bar y))=\varphi(\bar y)+\int_0^tL(\xi(\tau),m(\tau,\xi(\tau),\bar y,\varphi(\bar y)),\dot \xi(\tau))\, d\tau,\]
we obtain 
$\lim_{t\to 0}m(t,\xi(t),\bar y,\varphi(\bar y))=\varphi(\bar y)$. 

Set 
\[
F(s):=u(\xi(s),s)-m(s,\xi(s),\bar y,\varphi(\bar y))\quad 
\text{for} \ s\in[0,t].
\]
We repeat a similar argument in Step 3 of the proof of Theorem \ref{thm:fundamental-sol}  
to obtain a contradiction. 
Note that $F$ is continuous on $(0,t]$, and $\lim_{s\to 0}F(s)=0$, $F(t)>0$. 
By continuity, there is $\sigma\in[0,t)$ such that $F(\tau)>0$ for all $\tau\in(\sigma,t]$. Note that $F(\sigma)=0$. Then, for $s\in(\sigma,t]$,
\[
u(\xi(s),s)\leq u(\xi(\sigma),\sigma)+\int_{\sigma}^sL(\xi(\tau),u(\xi(\tau),\tau),\dot\xi(\tau))\, d\tau,\]
and
\[m(s,\xi(s),\bar y,\varphi(\bar y))=m(\sigma,\xi(\sigma),\bar y,\varphi(\bar y))+\int_{\sigma}^sL(\xi(\tau),m(\tau,\xi(\tau),\bar y,\varphi(\bar y)),\dot\xi(\tau))\, d\tau.\]
Therefore, for $s\in(\sigma,t]$,
\[F(s)\leq K\int_\sigma^sF(\tau)\, d\tau.\]
By the Gronwall inequality, we have $F(s)=0$ for all $s\in(\sigma,t]$, which contradicts $F(t)>0$.

Finally, for the case $t=0$, by \eqref{mt0}, if $x\neq y$, 
then $m(0,x,y,\varphi(y))=\infty$ by the definition. 
Hence, 
\[u(x,0)=\varphi(x)=m(0,x,x,\varphi(x))=\inf_{y\in\R^n}m(0,x,y,\varphi(y)).
\]
\end{proof}

\smallskip
Let us now give the following simple lemma, which will be essentially used to 
obtain the subadditivity and superadditivity of the fundamental solution. 
This is related to the implicit variational principle, which is the so-called \textit{Herglotz variational principle} (see \cite{CCJWY}).

Given $\gamma:[0,t]\to\mathbb R^n$ with $L(\gamma(s),0,\dot\gamma(s))\in L^1([0,t])$, we consider the ordinary differential equation 
\begin{equation}\label{dotu}
\begin{cases}
\dot{\xi}(s)=L(\gamma(s),\xi(s),\dot\gamma(s))\quad 
\text{ for  a.e. } s\in[0,t],
\\ \xi(0)=c,
\end{cases}
\end{equation}
for $c\in\R$. 
In our setting, 
\eqref{dotu} admits a unique absolutely continuous solution, 
which is denoted by $\xi=\xi_\gamma$.  By \eqref{LK}, $L(\gamma(s),\xi_\gamma(s),\dot\gamma(s))\in L^1([0,t])$.

\begin{lem}\label{m<dotu}
Let $c\in\R$, $t>0$ and $\gamma\in\AC([0,t];\R^n)$ with $L(\gamma(s),0,\dot\gamma(s))\in L^1([0,t])$, 
and let $\xi_\gamma$ be the solution to \eqref{dotu}, and $m$ be the fundamental solution to \eqref{eq:ini-1}. 
Then, 
\[
m(t,\gamma(t),\gamma(0),c)\leq \xi_\gamma(t). 
\]
\end{lem}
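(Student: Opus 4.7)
The plan is to compare $f(s):=m(s,\gamma(s),\gamma(0),c)$ with $\xi_\gamma(s)$ and then run a Gronwall argument, in the same spirit as Step~3 of the proof of Theorem~\ref{thm:fundamental-sol}. The starting observation is that $\gamma|_{[0,s]}$ is itself an admissible competitor in the variational formula \eqref{minf} defining $m(s,\gamma(s),\gamma(0),c)$, which immediately gives the one-sided inequality
\[
f(s)\le c+\int_0^s L(\gamma(\tau),f(\tau),\dot\gamma(\tau))\,d\tau
\qquad(s\in(0,t]),
\]
while $\xi_\gamma$ satisfies the analogous identity with equality by construction via \eqref{dotu}. Combining \eqref{LK}, the $L^1$-hypothesis on $L(\gamma,0,\dot\gamma)$, and the lower bound $L\ge -C_0$ entailed by the superlinearity (L3), one also sees that $f$ is bounded on $(0,t]$ and $f(s)\to c$ as $s\to 0^+$, so $f$ extends continuously to $[0,t]$ with $f(0)=c=\xi_\gamma(0)$.

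Assume for contradiction that $f(t)>\xi_\gamma(t)$, and set
\[
\sigma:=\sup\{s\in[0,t]:f(s)\le\xi_\gamma(s)\},
\]
which lies in $[0,t)$ by continuity and the contradiction hypothesis. If $\sigma>0$, then $f(\sigma)=\xi_\gamma(\sigma)$ and $f>\xi_\gamma$ on $(\sigma,t]$; concatenating a minimizer $\alpha\in\cC(\gamma(\sigma),\gamma(0);\sigma,0)$ of $m(\sigma,\gamma(\sigma),\gamma(0),c)$ (which exists by Theorem~\ref{thm:fundamental-sol}) with $\gamma|_{[\sigma,s]}$ produces an admissible competitor in \eqref{minf} whose integral decomposes exactly, yielding the restart inequality
\[
f(s)\le f(\sigma)+\int_\sigma^s L(\gamma(\tau),f(\tau),\dot\gamma(\tau))\,d\tau
\qquad(s\in[\sigma,t]).
\]
If $\sigma=0$, the direct inequality from the first paragraph plays the same role. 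In either case, subtracting the identity $\xi_\gamma(s)=\xi_\gamma(\sigma)+\int_\sigma^s L(\gamma,\xi_\gamma,\dot\gamma)\,d\tau$, invoking the $K$-Lipschitz dependence of $L$ in its second variable from (L2), and using $f>\xi_\gamma$ on $(\sigma,t]$ gives
\[
(f-\xi_\gamma)(s)\le K\int_\sigma^s (f-\xi_\gamma)(\tau)\,d\tau
\qquad(s\in(\sigma,t]),
\]
so Gronwall's inequality forces $f\le\xi_\gamma$ on $[\sigma,t]$, contradicting $f(t)>\xi_\gamma(t)$.

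The only subtle step is the restart inequality: it genuinely uses the existence of a minimizer of $m$ at $(\sigma,\gamma(\sigma))$ so that the concatenated competitor's integral splits cleanly into $m(\sigma,\gamma(\sigma),\gamma(0),c)-c=f(\sigma)-c$ on $[0,\sigma]$ and the desired tail on $[\sigma,s]$. Without this, one could only compare $f(s)$ to $c$, which would be inadequate when $\sigma>0$. The remaining bookkeeping (continuous extension of $f$ to $s=0$ and the final Gronwall step on a compact subinterval where $f-\xi_\gamma$ is automatically bounded) is routine.
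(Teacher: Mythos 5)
Your proof is correct and follows essentially the same route as the paper: argue by contradiction, take the last time $\sigma$ at which $m(\cdot,\gamma(\cdot),\gamma(0),c)$ and $\xi_\gamma$ agree, compare the dynamic-programming inequality for $m$ along $\gamma$ with the ODE identity for $\xi_\gamma$, and conclude via (L2) and Gronwall. Your derivation of the restart inequality by concatenating a minimizer of $m(\sigma,\gamma(\sigma),\gamma(0),c)$ with $\gamma|_{[\sigma,s]}$ (equivalently, one could take the infimum over initial segments as in Step 3 of the proof of Theorem \ref{thm:fundamental-sol}) and your check that $m(s,\gamma(s),\gamma(0),c)\to c$ as $s\to 0^+$ merely make explicit steps the paper leaves implicit.
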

\begin{proof}
By contradiction, assume that
\[m(t,\gamma(t),\gamma(0),c)>\xi_\gamma(t).
\]
We repeat a similar argument in Step 3 of the proof of Theorem \ref{thm:fundamental-sol} 
to obtain a contradiction. 
Set $F(s):=m(s,\gamma(s),\gamma(0),c)-\xi_\gamma(s)$ for $s\in[0,t]$. 
Since $F$ is continuous on $(0,t]$, $\lim_{s\to 0}F(s)=0$ and $F(t)>0$. 
By continuity, there is $\sigma\in[0,t)$ such that $F(\sigma)=0$ and $F(\tau)>0$ for all 
$\tau\in(\sigma,t]$. Note that 
\[
m(s,\gamma(s),\gamma(0),c)\leq m(\sigma,\gamma(\sigma),\gamma(0),c)+\int_{\sigma}^sL(\gamma(\tau),m(\tau,\gamma(\tau),\gamma(0),c),\dot\gamma(\tau))\, d\tau,
\]
and
\[
\xi_\gamma(s)=\xi_\gamma(\sigma)+\int_{\sigma}^sL(\gamma(\tau),\xi_\gamma(\tau),\dot\gamma(\tau))\, d\tau
\quad\text{for} \ s\in(\sigma,t], 
\]
which implies that 
\[
0\le F(s)\le K\int_{\sigma}^{s}F(\tau)\,d\tau.
\]
By the Gronwall inequality, we have $F(s)=0$ for all $s\in(\sigma,t]$, which contradicts $F(t)>0$. 
\end{proof}

\section{Proof of Theorem \ref{thm1}}\label{sec:proof}
In this section, we give a proof of Theorem \ref{thm1}.

\begin{prop}\label{prop:uep=m}
Let $u^\ep$ be the solution to \eqref{e}. 
We have 
\begin{equation}\label{eq:inf-m-ep}
u^\varepsilon(x,t)
=
\inf_{y\in\mathbb R^n} m^\ep(t,x,y,\varphi(y)) 
\quad
\text{for all} \ (t,x,y,c)\in [0,\infty)\times\R^n\times\R^n\times\R, 
\end{equation}
where 
\[
m^\ep(t,x,y,c):=\ep m\left(\frac{t}{\ep}, \frac{x}{\ep}, \frac{y}{\ep}, \frac{c}{\ep}\right) \quad \text{ for all $(t,x,y,c)\in [0,\infty)\times\R^n\times\R^n\times\R$,}
\]
and $m$ is the fundamental solution to \eqref{eq:ini-1}. 
Moreover, there is a constant $M_0>0$ depending on $H$ and 
$\|D\varphi\|_{L^\infty(\R^n)}$ such that any minimizing point $y$ of \eqref{eq:inf-m-ep} satisfies 
\[
|x-y|\le M_0t.
\]
\end{prop}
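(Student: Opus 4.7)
The plan is to reduce to the $\ep=1$ setting by rescaling. Given $u^\ep$, set
\[
v(y,\tau):=\frac{1}{\ep}u^\ep(\ep y,\ep\tau),\qquad \varphi_\ep(y):=\frac{1}{\ep}\varphi(\ep y).
\]
A direct computation shows that $v$ is a viscosity solution of $v_\tau+H(y,v,D_yv)=0$ on $\R^n\times(0,\infty)$ with initial data $v(\cdot,0)=\varphi_\ep$. Since $\|D\varphi_\ep\|_{L^\infty(\R^n)}=\|D\varphi\|_{L^\infty(\R^n)}$ and $\varphi_\ep\in\Lip(\R^n)$, and since $v\in \AL(\R^n\times[0,\infty))$ iff $u^\ep\in\AL(\R^n\times[0,\infty))$, the comparison principle Lemma~\ref{lem:CP} guarantees that $v$ is the unique $\AL$ solution of its Cauchy problem.

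Next I would apply Theorem~\ref{thm:rep} to $v$, yielding
\[
v(y,\tau)=\inf_{z\in\R^n}m(\tau,y,z,\varphi_\ep(z))=\inf_{z\in\R^n}m\!\Big(\tau,y,z,\tfrac{\varphi(\ep z)}{\ep}\Big).
\]
Undoing the rescaling with $y=x/\ep$, $\tau=t/\ep$, and changing the infimum variable via $y=\ep z$, one obtains
\[
u^\ep(x,t)=\ep\, v\!\Big(\tfrac{x}{\ep},\tfrac{t}{\ep}\Big)=\inf_{y\in\R^n}\ep\, m\!\Big(\tfrac{t}{\ep},\tfrac{x}{\ep},\tfrac{y}{\ep},\tfrac{\varphi(y)}{\ep}\Big)=\inf_{y\in\R^n}m^\ep(t,x,y,\varphi(y)),
\]
which is exactly \eqref{eq:inf-m-ep}. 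For $t=0$, the extension \eqref{mt0} of $m$ forces the infimum to be attained only at $y=x$, recovering $u^\ep(x,0)=\varphi(x)$, so the formula holds on $[0,\infty)$.

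For the bound on minimizers, I would invoke the second part of Lemma~\ref{lem:impli-rep-form} applied to $v$ with initial datum $\varphi_\ep$. That lemma provides a minimizing curve $\gamma\in\cC(x/\ep;t/\ep)$ for $v(x/\ep,t/\ep)$ with $|x/\ep-\gamma(0)|\le M_0(t/\ep)$, where $M_0$ depends only on $H$ and $\|D\varphi_\ep\|_{L^\infty(\R^n)}=\|D\varphi\|_{L^\infty(\R^n)}$ (hence is independent of $\ep$). By the ``Moreover'' part of Theorem~\ref{thm:rep} applied to $v$, a minimizer of the infimum expressing $v(x/\ep,t/\ep)$ is $z=\gamma(0)$. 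Translating through $y=\ep z$ gives a minimizer $y$ of \eqref{eq:inf-m-ep} with
\[
|x-y|=\ep\left|\tfrac{x}{\ep}-\gamma(0)\right|\le \ep\cdot M_0\cdot\tfrac{t}{\ep}=M_0 t,
\]
as required.

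The proof is essentially a rescaling identity, so I do not expect a substantial obstacle; the only things to verify carefully are that (i) the rescaled initial datum $\varphi_\ep$ preserves the Lipschitz constant so that the constant $M_0$ in Lemma~\ref{lem:impli-rep-form} is $\ep$-independent, and (ii) the $\AL$ growth class is invariant under the rescaling $(x,t)\mapsto(\ep x,\ep t)$, so that uniqueness transfers cleanly between $u^\ep$ and $v$.
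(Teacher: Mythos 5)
Your treatment of the representation formula \eqref{eq:inf-m-ep} is correct and is essentially the paper's argument: the paper performs the same rescaling (at the level of the implicit optimal-control formula rather than of the PDE) and then invokes Theorem \ref{thm:rep}; your version via the viscosity-solution scaling plus Lemma \ref{lem:CP} is an equivalent packaging, and your two verification points (invariance of the Lipschitz constant of $\varphi_\ep$ and of the class $\AL$ under the parabolic rescaling) are exactly the right ones.

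There is, however, a gap in your proof of the ``Moreover'' part. The proposition asserts that \emph{any} minimizing point $y$ of \eqref{eq:inf-m-ep} satisfies $|x-y|\le M_0 t$, and this is how it is used later (e.g.\ in the proof of Theorem \ref{thm1}, where one takes an arbitrary $y_\ep$ with $u^\ep(x,t)=m^\ep(t,x,y_\ep,\varphi(y_\ep))$). Your argument, via Lemma \ref{lem:impli-rep-form} and the ``Moreover'' of Theorem \ref{thm:rep}, only produces \emph{one} minimizing point, namely $y=\ep\gamma(0)$ for a minimizing curve $\gamma$ of the rescaled problem, and shows the bound for that point. Because $H$ is not monotone in $u$, one cannot simply assert that every minimizing point of the infimum arises as $\gamma(0)$ for some minimizing curve of \eqref{uinf}, so the ``any'' statement does not follow from what you wrote. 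The fix is short and is in effect the computation the paper displays: for an arbitrary minimizing point $y$, take a minimizer $\xi$ of $m^\ep(t,x,y,\varphi(y))$ and use \eqref{LK} together with the superlinearity (L3) (choosing $K_1$ so that $L(\cdot,0,v)\ge (1+\|D\varphi\|_{L^\infty(\R^n)})|v|-K_1$) to get
\[
u^\ep(x,t)=m^\ep(t,x,y,\varphi(y))\ \ge\ \varphi(y)+(1+\|D\varphi\|_{L^\infty(\R^n)})\,|x-y|-(K_1+K)t,
\]
and combine this with the elementary upper bound $u^\ep(x,t)\le \varphi(x)+\big(\max_{z}|L(z,0,0)|+K\big)t$ obtained from the constant curve; since $\varphi(x)-\varphi(y)\le \|D\varphi\|_{L^\infty(\R^n)}|x-y|$, this yields $|x-y|\le M_0t$ with the same $M_0$, uniformly in $\ep$.
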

\begin{proof}
We first prove \eqref{eq:inf-m-ep}. 
This is an easy consequence of the change of variables, but we give the proof for the readability. 
Note that $u^\ep$ satisfies 
\begin{align*}
u^\varepsilon(x,t)
=&\, 
\inf_{\gamma\in\cC(x;t)}
\bigg\{\varphi(\gamma(0))+\int_0^tL\Big(\frac{\gamma(s)}{\varepsilon},\frac{u^\varepsilon(\gamma(s),s)}{\varepsilon},\dot{\gamma}(s)\Big)\, ds\bigg\}
 \\
=&\, 
\inf_{\eta\in\cC(\frac{x}{\ep};\frac{t}{\varepsilon})}\bigg\{\varphi(\varepsilon\eta(0))+\varepsilon\int_0^{\frac{t}{\varepsilon}}L\Big(\eta(\tau),\frac{u^\varepsilon(\varepsilon\eta(\tau),\varepsilon \tau)}{\varepsilon},\dot{\eta}(\tau)\Big)\, d\tau\bigg\}.  
\nonumber
\end{align*}
Setting $v^\ep(x,t):=\frac{u^\varepsilon(\varepsilon x,\varepsilon t)}{\varepsilon}$, 
we have 
\[v^\ep\Big(\frac{x}{\varepsilon},\frac{t}{\varepsilon}\Big)=\frac{u^\epsilon(x,t)}{\varepsilon}
=\inf_{\eta\in\mathcal{C}(\frac{x}{\varepsilon};\frac{t}{\varepsilon})}\bigg\{\frac{\varphi(\varepsilon\eta(0))}{\varepsilon}+\int_0^{\frac{t}{\varepsilon}}L\Big(\eta(\tau),v^\ep(\eta(\tau),\tau),\dot{\eta}(\tau)\Big)\, d\tau\bigg\}.\]
Therefore, 
\[
u^\varepsilon(x,t)=\varepsilon v^\ep\Big(\frac{x}{\varepsilon},\frac{t}{\varepsilon}\Big)
=
\varepsilon \inf_{y\in\mathbb R^n} m\Big(\frac{t}{\varepsilon},\frac{x}{\varepsilon},\frac{y}{\varepsilon},\frac{\varphi(y)}{\varepsilon}\Big)
=\inf_{y\in\mathbb R^n}m^\ep(t,x,y,\varphi(y)), 
\]
which completes the proof of \eqref{eq:inf-m-ep}.

We next prove the latter claim. 
In view of (L3), 
there is a constant $K_1>0$ depending on $\|D\varphi\|_{L^\infty(\R^n)}$ such that 
\[
L(y,0,v)
\ge 
(1+\|D\varphi\|_{L^\infty(\R^n)})|v|-K_1\quad
\text{for all} \ y\in\R^n,\ v\in\R^n.
\]
Then, by \eqref{LK}, for a minimizer $\gamma\in\cC(x;t)$ of $u^\ep(x,t)$, 
\begin{align*}
u^\ep(x,t)&\geq \varphi(\gamma(0))+\int_0^t L\Big(\frac{\gamma(s)}{\ep},0,\dot\gamma(s)\Big)\, ds-Kt
\\ &\ge \varphi(\gamma(0))+\int_0^t (1+\|D\varphi\|_{L^\infty(\R^n)})|\dot\gamma(s)|\, ds-K_1t-Kt
\\ &\ge \varphi(\gamma(0))+(1+\|D\varphi\|_{L^\infty(\R^n)})|x-\gamma(0)|-(K_1+K)t.
\end{align*}
On the other hand, we have 
\[
u^\ep(x,t)\le \varphi(x)+\int_0^tL\Big(\frac{x}{\ep},0,0\Big)\,ds+Kt\leq \varphi(x)+\Big(\max_{y\in\R^n}|L(y,0,0)|+K\Big)t.
\]
Thus, 
\begin{align*}
(1+\|D\varphi\|_{L^\infty(\R^n)})|x-\gamma(0)|&\le \varphi(x)-\varphi(\gamma(0))+\Big(\max_{y\in\R^n}|L(y,0,0)|+K_1+2K\Big)t
\\ &\le \|D\varphi\|_{L^\infty(\R^n)}|x-\gamma(0)|+\Big(\max_{y\in\R^n}|L(y,0,0)|+K_1+2K\Big)t,
\end{align*}
which implies
\begin{equation*}
|x-\gamma(0)|\le \Big(\max_{y\in\R^n}|L(y,0,0)|+K_1+2K\Big)t. 
\end{equation*}
Set $M_0:=\max_{y\in\R^n}|L(y,0,0)|+K_1+2K$. 
Combining Theorem \ref{thm:rep} with the above observation, 
we finish the proof.  
\end{proof}

\medskip
We give two important key ingredients, Lemmas  \ref{lem:sub}, \ref{lem:sup}, to prove Theorem \ref{thm1}. The key idea is that, due to the periodicity of $L$, (L2), we can obtain an estimate of the increment of solutions of \eqref{dotu}, see \eqref{w5} below for example.

\begin{lem}\label{lem:sub}
Let $M_0>0$. 
There exists $C>0$ which depends on $M_0$ such that 
\[
m((\sigma+l)t,0,(\sigma+l)y,(\sigma+l)c)
\le m(\sigma t,0,\sigma y,\sigma c)+m(lt,0,ly,lc)+C
\]
for any $\sigma,l>0$ with $\max\{\sigma t,lt\}\ge1$, $y\in B_{M_0t}(0)$. 
\end{lem}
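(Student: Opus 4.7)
The plan is to apply the implicit (Herglotz) variational principle of Lemma \ref{m<dotu} to a competitor curve $\gamma\in\cC(0,(\sigma+l)y;(\sigma+l)t,0)$ assembled from translated copies of minimizers of the two right-hand-side quantities, joined by short linear ``bridges'' that absorb the non-integer part of the needed translation. Using Theorem \ref{thm:fundamental-sol}, I extract minimizers $\gamma_1\in\cC(0,\sigma y;\sigma t,0)$ and $\gamma_2\in\cC(0,ly;lt,0)$. Writing $\Psi_s(\eta;a)$ for the value at time $s$ of the solution to \eqref{dotu} along a curve $\eta$ with initial datum $a$, it suffices by Lemma \ref{m<dotu} to prove
\[
\Psi_{(\sigma+l)t}(\gamma;(\sigma+l)c)\le m(\sigma t,0,\sigma y,\sigma c)+m(lt,0,ly,lc)+C
\]
for a constant $C=C(M_0)$.

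The central mechanism is a \emph{shift-stability} property of the Herglotz flow. Monotonicity of $a\mapsto \Psi_s(\eta;a)$, which is a comparison-principle consequence of (L2), combined with the $\Z$-periodicity of $L$ in $r$ yields, for every $\delta\in\R$,
\[
\Psi_s(\eta;a)+\lfloor \delta\rfloor\;\le\;\Psi_s(\eta;a+\delta)\;\le\;\Psi_s(\eta;a)+\lfloor \delta\rfloor+1,
\]
so that $|\Psi_s(\eta;a+\delta)-\Psi_s(\eta;a)-\delta|\le 1$ uniformly in $s\ge 0$. This $O(1)$ dependence of the flow on the initial datum (modulo a shift by $\delta$) is what sidesteps the Gronwall-type exponential-in-$s$ blow-up that the naive Lipschitz bound would produce. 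In parallel, the $\Z^n$-periodicity of $L$ in $y$ from (L1) gives $\Psi_s(\eta+w;a)=\Psi_s(\eta;a)$ for every $w\in\Z^n$, so integer translates of a curve come for free.

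Now the construction. Assume WLOG $\sigma t\ge 1$. Pick $w\in\Z^n$ with $|w-ly|\le\sqrt n$ by componentwise rounding. Assemble $\gamma$ from the translate $\gamma_1+w$ over a subinterval of length $\sigma t$ and the unshifted $\gamma_2$ over a subinterval of length $lt$, together with two linear bridges of bounded duration $\tau_0=O(1)$ closing the position gaps $(\sigma+l)y\to\sigma y+w$ at the start and $w\to ly$ at the junction (each gap of size $\le\sqrt n$). The bridges have bounded speed, so by periodicity of $L$ in $(y,r)$ combined with (L2) their Herglotz contribution is $O(1)$. By the $\Z^n$-invariance, the $\gamma_1$-piece contributes $m(\sigma t,\cdot)-\sigma c$ to the Herglotz trajectory up to an additive error of $1$ via shift-stability applied to its (displaced) initial value, and the $\gamma_2$-piece contributes $m(lt,\cdot)-lc$ up to another additive error of $1$. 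Summing yields the claimed bound.

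The principal obstacle is the time-budget management: fitting the bridges into the exact total time $(\sigma+l)t$ forces a slight reparametrization of $\gamma_1$, whose cost effect must be shown to be $O(1)$ uniformly in $\sigma$ and $l$. This is where the hypothesis $\max\{\sigma t,lt\}\ge 1$ and the constraint $y\in B_{M_0 t}(0)$ become essential: the former guarantees a constant time window of size at least $1$ for the bridges, so the reparametrization factor applied to $\gamma_1$ is $O(1)$; the latter controls the gap size $|w-ly|$ and the effective speed of the minimizers. The cost of the reparametrization itself is then controlled by convexity of $L$ in $v$ together with the periodic boundedness of $L(\cdot,\cdot,0)$, giving a constant $C=C(M_0)$ that is independent of $\sigma$, $l$, and $c$.
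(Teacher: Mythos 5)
Your overall architecture (concatenate an integer-translated copy of the minimizer for $m(\sigma t,0,\sigma y,\sigma c)$ with the minimizer for $m(lt,0,ly,lc)$, insert short bounded-speed bridges of length $O(\sqrt n)$, control the $r$-dependence by the ODE comparison principle plus $\Z$-periodicity of $L$ in $r$ — your ``shift-stability'' estimate with error at most $1$ per junction — and finish with Lemma \ref{m<dotu}) is exactly the paper's strategy, and those parts are sound. The genuine gap is the time-budget step. You propose to make room for the bridges by a ``slight reparametrization of $\gamma_1$'' and assert its cost is $O(1)$ ``by convexity of $L$ in $v$ together with the periodic boundedness of $L(\cdot,\cdot,0)$''. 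This does not work under (L1)--(L4): for $\lambda>1$ convexity only gives $L(y,r,\lambda v)\ge \lambda L(y,r,v)-(\lambda-1)L(y,r,0)$, i.e.\ a bound in the wrong direction, and there is no upper growth bound on $L$ in $v$ (only superlinearity), so speeding up the whole minimizer — whose velocity may be large on small sets — can increase the Herglotz value by an amount that is not controlled by a constant. Moreover, the hypothesis $\max\{\sigma t,lt\}\ge1$ only guarantees that one piece has duration at least $1$; it does not by itself say that this piece can be compressed at $O(1)$ cost, which is precisely the point that needs proof.

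The paper resolves this by a local surgery rather than a global reparametrization: assuming $\sigma t\ge1$, a pigeonhole argument over the quarter-intervals $[d,d+\tfrac14]\subset[0,\lfloor\sigma t\rfloor]$ (using that the total action is $O(\sigma t)$ from above by comparison with a straight line of speed $\le M_0$, and bounded below on each interval by periodicity and (L2)) produces a time $d$ where the local action is at most a constant $C_d$; superlinearity then bounds the displacement $|\gamma_\sigma(d+\tfrac14)-\gamma_\sigma(d)|$, so this quarter-length piece can be replaced by a straight segment of duration $\tfrac18$ with bounded speed. This frees exactly the $\tfrac18$ of time needed for the two bridges of duration $\tfrac1{16}$ each, and both the inserted segment and the discarded piece cost only $O(1)$ in the Herglotz accounting (the discarded piece because its action is bounded below by $\tfrac14(\min L-K)$). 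Without this selection of a ``cheap'' cuttable piece — or some substitute argument of comparable strength — your constant $C$ cannot be made independent of $\sigma$, $l$, and the minimizer, so the proof as proposed is incomplete.
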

\begin{proof}
Without any loss of generality, we assume $\sigma t\ge 1$. By Theorem \ref{thm:fundamental-sol}, 
we can take a minimizer $\gamma_\sigma\in\cC(0,\sigma y;\sigma t,0)$ of $m(\sigma t,0,\sigma y,\sigma c)$. 
Set 
\[
\xi^\sigma_1(s):=m(s,\gamma_\sigma(s),\sigma y,\sigma c)
\quad\text{for} \ s\in[0,\sigma t]. 
\]
Then, for $s, s'\in[0,\sigma t]$ with $s'>s$,
\[
\frac{\xi^\sigma_1(s')-\xi^\sigma_1(s)}{s'-s}=\frac{1}{s'-s}\int_s^{s'} L(\gamma_\sigma(s),\xi^\sigma_1(s),\dot{\gamma}_\sigma(s))\,ds. 
\]
Sending $s'\to s$ yields 
\begin{equation*}\label{ceq}
\begin{cases}
\dot{\xi}^\sigma_1(s)=L(\gamma_\sigma(s),\xi^\sigma_1(s),\dot{\gamma}_\sigma(s)) \quad \text{ for a.e. } s\in(0,\sigma t), \\ \xi^\sigma_1(0)=\sigma c. 
\end{cases}
\end{equation*}
Similarly, we can take a minimizer $\gamma_l \in\cC(0,ly;lt,0)$ of $m(lt,0,ly,lc)$, and set \[\xi^l_1(s):=m(s,\gamma_l(s),ly,lc).\]

Take $k\in\mathbb Z^n$ so that $k-ly\in Y:=[-\frac{1}{2},\frac{1}{2}]^n$. 
Set $\gamma_k(s):=\gamma_\sigma(s)+k$ for $s\in[0,\sigma t]$. 
Then, 
$\gamma_k(0)-(\sigma +l)y=\sigma y+k-(\sigma +l)y=k-ly\in Y$. 
We define $\eta:[0,(\sigma+l)t]\to\mathbb R^n$ by
\begin{equation*}
\eta(s):=
\begin{cases}
\alpha_1(s) \quad &\text{for} \ 0\le s\le \frac{1}{16},
\\ 
\gamma_k(s-\frac{1}{16}) \quad& \text{for} \ \frac{1}{16}\le s\le d+\frac{1}{16},
\\ 
\alpha_d(s-(d+\frac{1}{16})) \quad& \text{for} \ d+\frac{1}{16}\le s\le d+\frac{3}{16},
\\ 
\gamma_k(s+\frac{1}{16})\quad & \text{for} \ d+\frac{3}{16}\le s\le \sigma  t-\frac{1}{16},
\\ 
\alpha_2(s-(\sigma t-\frac{1}{16})) \quad&\text{for} \ \sigma t-\frac{1}{16}\le s\le \sigma t, 
\\ 
\gamma_l(s-\sigma t) \quad&\text{for} \ \sigma t \le s\le (\sigma +l)t, 
\end{cases}
\end{equation*}
where $d$ satisfying $0<d<\sigma t-\frac{1}{4}$ 
will be fixed later, $\alpha_d:[0,\frac{1}{8}]\to\R^n$ is a straight line satisfying $\alpha_d(0)=\gamma_k(d)$, $\alpha_d(\frac{1}{8})=\gamma_k(d+\frac{1}{4})$, and 
$\alpha_i:[0,\frac{1}{16}]\to\mathbb R^n$ for $i=1,2$ are straight lines satisfying 
$\alpha_1(0)=(\sigma +l)y$, 
$\alpha_1(\frac{1}{16})=\sigma y+k$, 
$\alpha_2(0)=k$, 
$\alpha_2(\frac{1}{16})=ly$, 
and 
$|\dot{\alpha}_i(s)|\le M_0$ for $s\in[0,\frac{1}{16}]$. 
Note that $\eta\in\cC(0,(\sigma +l)y;(\sigma +l)t,0)$ (see Figure \ref{Fig:1}).  
\begin{figure}[htb]
\centering
\includegraphics[width=8cm]{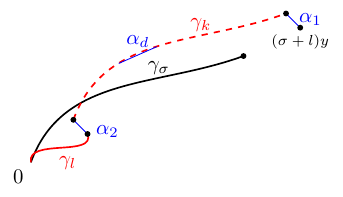}
\caption{Image of $\eta$.} \label{Fig:1}
\end{figure}

Let $\xi_2=\xi_{2,\eta}$ be the solution of 
\begin{equation*}
\begin{cases}
\dot{\xi_2}(s)=L(\eta(s),\xi_2(s),\dot{\eta}(s)) \quad \text{ for a.e. } s\in(0,(\sigma+l)t),
\\ 
\xi_2(0)=(\sigma +l)c. 
\end{cases}
\end{equation*}
We take $n_1\in\mathbb Z$ such that $lc+n_1-\xi_2(\sigma t)\in[0,1)$. Note that $\tilde \xi_2(s):=\xi_2(\sigma t+s)$ satisfies
\[
\dot {\tilde\xi}_2(s)=\dot{\xi}_2(\sigma t+s)=L(\eta(\sigma t+s),\xi_2(\sigma t+s),\dot\eta(\sigma t+s))=L(\gamma_l(s),\tilde \xi_2(s),\dot\gamma_l (s)) 
\]
for $s\in(0,lt)$ with the initial condition $\tilde{\xi}_2(0)=\xi_2(\sigma t)$. 
Let $\tilde{\xi}_1(s):=\xi^l_1(s)+n_1$. 
Noting that 
$\tilde\xi_1(0)=lc+n_1\ge \xi_2(\sigma t)=\tilde \xi_2(0)$, 
and the periodicity of $L$, (L2), and by the comparison principle for the following ODE
\[
\dot{X}(s)=F(X(s),s) \quad \text{ for a.e. } s\in(0,lt),
\]
where $F(X,s):=L(\gamma_l(s),X,\dot{\gamma}_l(s))$, we obtain 
\[
\tilde \xi_2(lt)=\xi_2((\sigma +l)t)\le \tilde \xi_1(lt)=\xi^l_1(lt)+n_1. 
\]
Also, by the choice of $n_1\in\N$, we have $lc+n_1< \xi_2(\sigma t)+1$, which implies
\begin{equation}\label{w5}
\xi_2((\sigma +l)t)-\xi_2(\sigma t)< \xi^l_1(lt)-lc+1.
\end{equation}
See Figure \ref{Fig:2} for the image. 
\begin{figure}[htb]
\centering
\includegraphics[width=15cm]{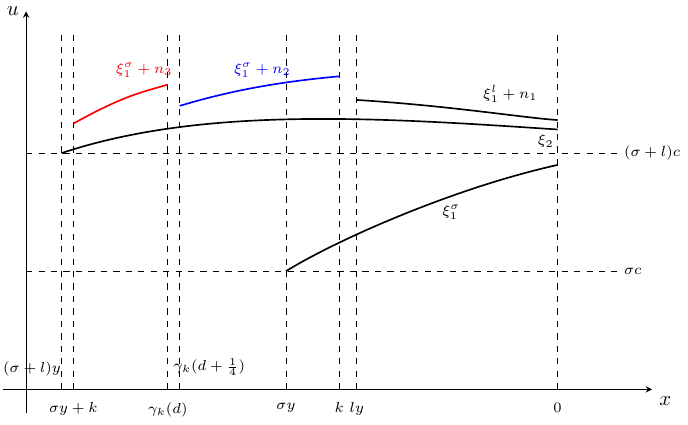}
\caption{Graphs of $\xi^\sigma_1$, $\xi^\sigma_1+n_2$, $\xi^\sigma_1+n_3$, $\xi^l_1+n_1$ and $\xi_2$.} \label{Fig:2}
\end{figure}

\smallskip
Now, we prove that there is $d\in\{0,\frac{1}{4},\frac{1}{2},\frac{3}{4},\dots,\lfloor \sigma t\rfloor-\frac{1}{4}\}$ such that
\begin{equation}\label{Cd}
\xi^\sigma_1\Big(d+\frac{1}{4}\Big)-\xi^\sigma_1(d)
=\int_d^{d+\frac{1}{4}}L(\gamma_\sigma(s),\xi^\sigma_1(s),\dot{\gamma}_\sigma(s))\, ds\le C_d,
\end{equation}
where
\[
C_d:=\max\bigg\{\max_{y\in\R^n,\,|v|\le M_0}|L(y,0,v)|+K,-\min\Big\{0,\min_{y\in\R^n,\,v\in\R^n}L(y,0,v)\Big\}+K\bigg\}>0.
\]
We take a straight line $\alpha:[0,\sigma t]\to\mathbb R^n$ connecting $\sigma y$ and $0$ with speed $|\dot{\alpha}|\le M_0$. 
Then, 
\begin{align*}
\xi^\sigma_1(\sigma t)-\sigma c&=m(\sigma t,0,\sigma y,\sigma c)-\sigma c\le \int_0^{\sigma t}L(\alpha(s),m(s,\alpha(s),y,c),\dot{\alpha}(s))\, ds
\\ &\le \Big(\max_{y\in\R^n,\,|v|\le M_0}L(y,0,v)+K\Big)\sigma t, 
\end{align*}
since $L(x,r,v)$ is periodic in $r$. If \eqref{Cd} does not hold, then
\[
\xi^\sigma_1(\lfloor \sigma t\rfloor)-\sigma c=\int_0^{\lfloor \sigma t\rfloor}L(\gamma_\sigma(s),\xi^\sigma_1(s),\dot{\gamma}_\sigma(s))\, ds\ge 4\lfloor \sigma t\rfloor   C_d,\]
which implies
\begin{align*}
C_d \sigma t\geq &\Big(\max_{y\in\R^n,\,|v|\le M_0}L(y,0,v)+K\Big)\sigma t
\\ &\ge \xi^\sigma_1(\sigma t)-\xi^\sigma_1(\lfloor \sigma t\rfloor)+\xi^\sigma_1(\lfloor \sigma t\rfloor)-\sigma c\ge \int_{\lfloor \sigma t\rfloor}^{\sigma t}L(\gamma_\sigma(s),\xi^\sigma_1(s),\dot{\gamma}_\sigma(s))\, ds+4\lfloor \sigma t\rfloor  C_d
\\ &\ge \min\Big\{0,\min_{y\in\R^n,\,v\in\R^n}L(y,0,v)\Big\}-K+4\lfloor \sigma t\rfloor  C_d\geq -C_d+4\lfloor \sigma t\rfloor  C_d.
\end{align*}
Thus, 
\[C_d(\lfloor \sigma t\rfloor+1)\geq C_d \sigma t\geq -C_d+4\lfloor \sigma t\rfloor  C_d,\]
that is, $2\geq 3\lfloor \sigma t\rfloor$, which leads to a contradiction since $\sigma t\ge 1$. By (L3), there is a constant $K_1>0$ such that 
\[
L(y,0,v)\ge |v|-K_1\quad
\text{for all} \ y\in\R^n,\ v\in\R^n.
\]
Therefore, we have
\begin{equation}\label{d1/4-d}
\begin{aligned}
C_d&\geq \int_d^{d+\frac{1}{4}}L(\gamma_\sigma(s),0,\dot{\gamma}_\sigma(s))\, ds-\frac{K}{4}
 \\ &\geq \int_d^{d+\frac{1}{4}}|\dot{\gamma}_\sigma(s)|\, ds-\frac{K_1+K}{4}\geq \bigg|\gamma_k\Big(d+\frac{1}{4}\Big)-\gamma_k(d)\bigg|-\frac{K_1+K}{4},
\end{aligned}
\end{equation}
which implies that $|\dot{\alpha}_d|\leq 8C_d+2(K_1+K)$. Hence, 
\begin{equation}\label{w4}
\begin{aligned}
\xi_2\Big(d+\frac{3}{16}\Big)-\xi_2\Big(d+\frac{1}{16}\Big)
&=\int_{d+\frac{1}{16}}^{d+\frac{3}{16}} L(\eta(s),\xi_2(s),\dot{\eta}(s))\, ds
\\ &\leq \int_{0}^{\frac{1}{8}} L(\alpha_d(s),0,\dot{\alpha}_d(s))\, ds+\frac{K}{8}\le M_1 
\end{aligned}
\end{equation}
for some $M_1>0$. 

\medskip
We choose $n_2\in\mathbb Z$ so that 
\[
\xi^\sigma_1\Big(d+\frac{1}{4}\Big)+n_2-\xi_2\Big(d+\frac{3}{16}\Big)\in[0,1).\]
Let $\xi_3(s)$ be the solution of 
\begin{equation*}
\begin{cases}
\dot{\xi}_3(s)
=
L(\eta(s),\xi_3(s),\dot{\eta}(s)) \quad\text{ for a.e. } s\in(d+\frac{3}{16},\sigma t), 
\\ 
\xi_3(d+\frac{3}{16})=\xi_1(d+\frac{1}{4})+n_2.
\end{cases}
\end{equation*}
Then, $\xi^\sigma_1(s+\frac{1}{16})+n_2=\xi_3(s)$ for $s\in[d+\frac{3}{16},\sigma t-\frac{1}{16}]$. By the comparison principle, we have
\[
\xi_2(\sigma t)\le \xi_3(\sigma t), 
\]
which implies
\[
\xi_2(\sigma t)-\xi_2\Big(d+\frac{3}{16}\Big)\le \xi_3(\sigma t)-\xi^\sigma_1\Big(d+\frac{1}{4}\Big)-n_2+1. 
\]
Since
\begin{align*}
&\xi_3(\sigma t)-\xi_3\Big(\sigma t-\frac{1}{16}\Big)\\
=&\, 
\int_{\sigma t-\frac{1}{16}}^{\sigma t} L(\eta(s),\xi_3(s),\dot{\eta}(s))\, ds
\le 
\int_0^{\frac{1}{16}} (L(\alpha_2(s),0,\dot{\alpha}_2(s))+K)\, ds
\le M_2
\end{align*}
for some $M_2>0$. We get
\[\xi_2(\sigma t)-\xi_2\Big(d+\frac{3}{16}\Big)
\le \xi_3\Big(\sigma t-\frac{1}{16}\Big)+M_2-\xi^\sigma_1\Big(d+\frac{1}{4}\Big)-n_2+1.
\]
Note that $\xi_3(\sigma t-\frac{1}{16})=\xi^\sigma_1(\sigma t)+n_2$, we get
\begin{equation}\label{w3}
\xi_2(\sigma t)-\xi_2\Big(d+\frac{3}{16}\Big)
\le 
\xi^\sigma_1(\sigma t)-\xi^\sigma_1\Big(d+\frac{1}{4}\Big)+M_2+1. 
\end{equation}

We choose $n_3\in\mathbb Z$ so that $\sigma c+n_3-\xi_2(\frac{1}{16})\in[0,1)$. 
By the comparison principle once again, we have
\[
\xi_2\Big(d+\frac{1}{16}\Big)\le \xi^\sigma_1(d)+n_3, 
\] 
which implies
\begin{equation}\label{w2}
\xi_2\Big(d+\frac{1}{16}\Big)-\xi_2\Big(\frac{1}{16}\Big)
\le 
\xi^\sigma_1(d)-\sigma c+1. 
\end{equation}
Also,
\begin{equation}\label{w1}
\xi_2\Big(\frac{1}{16}\Big)-(\sigma +l)c=\int_0^{\frac{1}{16}}L(\alpha_1(s),\xi_2(s),\dot{\alpha}_1(s))\, ds\le M_2.
\end{equation}
Combining \eqref{w5}, \eqref{w4}, \eqref{w3}, \eqref{w2}, \eqref{w1}, 
we get
\[
\xi_2((\sigma+l)t)-(\sigma +l)c
\le \xi^l_1(lt)+\xi^\sigma_1(\sigma t)+\xi^\sigma_1(d)-\xi^\sigma_1\Big(d+\frac{1}{4}\Big)-(\sigma +l)c+C.
\]
Note that
\[\xi^\sigma_1\Big(d+\frac{1}{4}\Big)-\xi^\sigma_1(d)
=
\int_d^{d+\frac{1}{4}}L(\gamma_\sigma(s),\xi^\sigma_1(s),\dot\gamma_\sigma(s))\, ds\ge \frac{1}{4}\Big(\min_{y\in\R^n,\,v\in\R^n}L(y,0,v)-K\Big). 
\]
By Lemma \ref{m<dotu}, we obtain 
\begin{align*}
m((\sigma +l)t,0,(\sigma +l)y,(\sigma +l)c)&\le \xi_2((\sigma +l)t)\le \xi^l_1(lt)+\xi^\sigma_1(\sigma t)+C
\\ &=m(lt,0,ly,lc)+m(\sigma t,0,\sigma y,\sigma c)+C
\end{align*}
for some $C>0$, which finishes the proof. 
\end{proof}

\begin{lem}\label{bdme}
Fix $M_0>0$. There is $C>0$ depending on $M_0$ and $H$ such that for all $\ep>0$, $t>0$, $x,y\in\R^n$ with $|x-y|\leq M_0t$ and $c\in\R$, we have
\[c-Ct\leq m^\ep(t,x,y,c)\leq c+Ct.\]
\end{lem}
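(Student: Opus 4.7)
The main idea is to exploit the scaling $m^\ep(t, x, y, c) = \ep\, m(t/\ep, x/\ep, y/\ep, c/\ep)$ to reduce the statement to the corresponding two-sided estimate for the unscaled fundamental solution $m$: it suffices to show the existence of $C = C(M_0, H) > 0$ such that, for every $s > 0$, $(x', y') \in \R^n \times \R^n$ with $|x' - y'| \leq M_0 s$, and every $c' \in \R$,
\[
c' - Cs \leq m(s, x', y', c') \leq c' + Cs.
\]
Indeed, setting $s = t/\ep$, $x' = x/\ep$, $y' = y/\ep$, $c' = c/\ep$ (so that the hypothesis $|x - y| \leq M_0 t$ translates exactly to $|x' - y'| \leq M_0 s$) and multiplying through by $\ep$ recovers Lemma~\ref{bdme}.

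For the upper bound I would test $m(s, x', y', c')$ against the straight-line competitor $\alpha : [0, s] \to \R^n$ joining $y'$ to $x'$ at constant speed $|\dot\alpha| = |x' - y'|/s \leq M_0$ (taking $\alpha \equiv y'$ if $x' = y'$). By Theorem~\ref{thm:fundamental-sol},
\[
m(s, x', y', c') \leq c' + \int_0^s L\bigl(\alpha(\tau), m(\tau, \alpha(\tau), y', c'), \dot\alpha(\tau)\bigr)\, d\tau.
\]
Since $L$ is continuous on $\R^n \times \R \times \R^n$ and $\Z^{n+1}$-periodic in $(y, r)$ by (L1)--(L2), the constant
\[
C_1 := \sup\bigl\{L(y, r, v) : (y, r) \in \R^n \times \R,\; |v| \leq M_0\bigr\}
\]
is finite, yielding $m(s, x', y', c') \leq c' + C_1 s$.

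For the lower bound, the continuity and periodicity of $L$ in $(y, r)$ combined with the uniform superlinearity (L3) imply that $C_2 := -\inf_{(y, r, v) \in \R^n \times \R \times \R^n} L(y, r, v) < \infty$. Taking any minimizer $\gamma$ for $m(s, x', y', c')$ provided by Theorem~\ref{thm:fundamental-sol} and substituting into \eqref{minf} gives
\[
m(s, x', y', c') = c' + \int_0^s L\bigl(\gamma(\tau), m(\tau, \gamma(\tau), y', c'), \dot\gamma(\tau)\bigr)\, d\tau \geq c' - C_2 s.
\]
Setting $C := \max(C_1, C_2)$ and undoing the scaling completes the argument. There is no genuine obstacle here: the proof is a one-line application of the scaling relation together with the uniform two-sided control of $L$, the upper control coming from the straight-line competitor (requiring the hypothesis $|x - y| \leq M_0 t$) and the lower control coming from (L3) and the compactness furnished by periodicity in $(y, r)$.
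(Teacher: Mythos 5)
Your proof is correct and follows essentially the same route as the paper: scale back to the unscaled fundamental solution, bound from above using the straight-line competitor (whose speed is at most $M_0$ thanks to $|x-y|\leq M_0 t$), and bound from below using that $L$ is bounded below uniformly via (L3) together with periodicity. The only cosmetic difference is that you take the sup/inf of $L$ over all $(y,r)$ directly (using the joint $\Z^{n+1}$-periodicity), whereas the paper works with $L(\cdot,0,\cdot)$ and absorbs the $r$-dependence through the bound $|L(y,r_1,v)-L(y,r_2,v)|\leq K$.
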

\begin{proof}
Let $\alpha:[0,t/\ep]\to\R^n$ be a straight line with $\alpha(0)=y/\ep$ and $\alpha(t/\ep)=x/\ep$. Then $|\dot\alpha|\leq M_0$ and
\begin{align*}
m^\ep(t,x,y,c)&\leq \ep\Big(\frac{c}{\ep}+\int_0^{t/\ep}L\Big(\alpha(s),m\Big(s,\alpha(s),\frac{y}{\ep},\frac{c}{\ep}\Big),\dot\alpha(s)\Big)\, ds\Big)
\\ &\leq c+\Big(\max_{y\in\R^n,\,|v|\leq M_0}L(y,0,v)+K\Big)t.
\end{align*}
For all curves $\gamma\in \cC(x/\ep,y/\ep;t/\ep,0)$, we have
\[\ep\Big(\frac{c}{\ep}+\int_0^{t/\ep}L\Big(\gamma(s),m\Big(s,\gamma(s),\frac{y}{\ep},\frac{c}{\ep}\Big),\dot\gamma(s)\Big)\, ds\Big)
\geq c+\Big(\min_{y\in\R^n,\,v\in\R^n}L(y,0,v)-K\Big)t.
\]
By taking the infimum over all $\gamma\in \cC(x/\ep,y/\ep;t/\ep,0)$, we conclude.
\end{proof}

\begin{cor}\label{cor:m-bar}
For all $x,y\in\R^n$, $t\ge0$, $c\in\R$, 
the limit 
\[
\lim_{\ep\to0}m^\ep(t,x,y,c)
=\lim_{\ep\to 0}\ep m\Big(\frac{t}{\ep},\frac{x}{\ep},\frac{y}{\ep},\frac{c}{\ep}\Big)
\]
exists, which is denoted by $\ol{m}(t,x,y,c)$. 
\end{cor}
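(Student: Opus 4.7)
The plan is to combine the approximate subadditivity of Lemma~\ref{lem:sub} with the linear bound of Lemma~\ref{bdme} via a Fekete argument to handle the case $x=0$, and then pass to general $x$ by a short curve-surgery in the spirit of Lemma~\ref{lem:sub}.

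For $t=0$ the claim is immediate from \eqref{mt0}, since $m^\ep(0,x,y,c)=c$ when $x=y$ and $+\infty$ otherwise. Fix then $t>0$, $y\in\R^n$, $c\in\R$, and first consider $x=0$. Choose $M_0$ with $|y|\le M_0 t$ and set $a(\lambda):=m(\lambda t,0,\lambda y,\lambda c)$ for $\lambda\ge 1/t$. Lemma~\ref{bdme} (applied with $\ep=1$, noting that $|0-\lambda y|\le M_0(\lambda t)$) gives $|a(\lambda)-\lambda c|\le C\lambda t$, while Lemma~\ref{lem:sub} yields $a(\sigma+l)\le a(\sigma)+a(l)+C$ for $\sigma,l\ge 1/t$. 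Hence $b:=a+C$ is genuinely subadditive on $[1/t,\infty)$ with $b(\lambda)/\lambda$ bounded, so the standard Fekete argument---pick a sequence $\lambda_n\to\infty$ with $a(\lambda_n)/\lambda_n\to\liminf a(\lambda)/\lambda$, write any large $\mu$ as $\mu=k\lambda_n+r$ with $r\in[\lambda_n,2\lambda_n)$, iterate subadditivity, and send $k,n\to\infty$---shows that $\lim_{\lambda\to\infty}a(\lambda)/\lambda$ exists. Taking $\lambda=1/\ep$, this is exactly $\lim_{\ep\to 0}m^\ep(t,0,y,c)$.

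For general $x$, I invoke the identity $m(\tau,\xi+k,\eta+k,\rho)=m(\tau,\xi,\eta,\rho)$ for all $k\in\Z^n$, which follows from the uniqueness part of Theorem~\ref{thm:fundamental-sol}: by a change of variables and the $\Z^n$-periodicity of $L$ in $y$, the candidate $(\tau,\xi,\eta,\rho)\mapsto m(\tau,\xi+k,\eta+k,\rho)$ also satisfies \eqref{minf}. Choose $k_\ep\in\Z^n$ so that $\xi_\ep:=x/\ep-k_\ep\in[-\tfrac12,\tfrac12]^n$ and set $\eta_\ep:=y/\ep-k_\ep$; then $\eta_\ep-\xi_\ep=(y-x)/\ep$ and
\[
m^\ep(t,x,y,c)=\ep\,m(t/\ep,\xi_\ep,\eta_\ep,c/\ep).
\]
The argument then reduces to proving the uniform-in-$\ep$ bound
\[
\bigl|m(t/\ep,\xi_\ep,\eta_\ep,c/\ep)-m(t/\ep-2,0,(y-x)/\ep,c/\ep)\bigr|\le C,
\]
together with the analogous time-gap estimate $|m(t/\ep,0,(y-x)/\ep,c/\ep)-m(t/\ep-2,0,(y-x)/\ep,c/\ep)|\le C$. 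After multiplying by $\ep$ and sending $\ep\to0$, these $O(1)$ discrepancies vanish, and the conclusion follows from the $x=0$ case applied with $y-x$ in place of $y$.

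The main obstacle is the two uniform $O(1)$ bounds above. I prove the inequality $m(t/\ep,\xi_\ep,\eta_\ep,c/\ep)\le m(t/\ep-2,0,(y-x)/\ep,c/\ep)+C$ by a curve surgery mirroring the proof of Lemma~\ref{lem:sub}. Take a minimizer $\gamma^*$ of $m(t/\ep-2,0,(y-x)/\ep,c/\ep)$ and build a competitor $\tilde\gamma\in\cC(\xi_\ep,\eta_\ep;t/\ep,0)$ by prepending a straight segment from $\eta_\ep$ to $(y-x)/\ep$ on $[0,1]$, following $\gamma^*(\cdot-1)$ on $[1,t/\ep-1]$, and appending a straight segment from $0$ to $\xi_\ep$ on $[t/\ep-1,t/\ep]$. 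The two short pieces have velocities bounded by $|\xi_\ep|\le\sqrt{n}/2$, so by the $\Z^{n+1}$-periodicity and continuity of $L$ they contribute only $O(1)$ to $\xi_{\tilde\gamma}$. For the middle piece, choose $\nu_\ep\in\Z$ so that $c/\ep+\nu_\ep\ge\xi_{\tilde\gamma}(1)>c/\ep+\nu_\ep-1$ (hence $|\nu_\ep|$ is bounded uniformly in $\ep$), and set $\hat\xi(s):=\xi_{\gamma^*}(s-1)+\nu_\ep$. By $\Z$-periodicity of $L$ in $r$, $\hat\xi$ satisfies the same ODE as $\xi_{\tilde\gamma}$ on $[1,t/\ep-1]$ with $\hat\xi(1)\ge\xi_{\tilde\gamma}(1)$; ODE comparison then yields $\xi_{\tilde\gamma}(t/\ep-1)\le m(t/\ep-2,0,(y-x)/\ep,c/\ep)+\nu_\ep$, and Lemma~\ref{m<dotu} finishes the upper bound. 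The reverse inequality and the time-gap comparison are obtained by symmetric surgeries (for the latter, a pigeonhole exactly as in Lemma~\ref{lem:sub}'s proof locates a unit interval of bounded cost inside $[0,t/\ep]$ suitable for endpoint matching).
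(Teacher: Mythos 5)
Your proposal is correct and follows essentially the same route as the paper: the case $t=0$ is read off from \eqref{mt0}, and for $t>0$ the limit is obtained by the same Fekete-type argument combining the approximate subadditivity of Lemma \ref{lem:sub} with the linear bound of Lemma \ref{bdme}. The only difference is that the paper dismisses general $x$ with ``without loss of generality $x=0$,'' whereas you justify this reduction explicitly via the translation identity $m(t,x+k,y+k,c)=m(t,x,y,c)$ (from uniqueness in Theorem \ref{thm:fundamental-sol}) together with $O(1)$ curve-surgery and time-gap comparisons in the spirit of Lemma \ref{lem:sub}; this extra step is consistent with the paper's techniques and makes the reduction precise.
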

\begin{proof}
Without loss of generality, we may assume $x=0$.

We first consider the case where $t>0$. The proof is quite similar to that of the Fekete lemma. For each $y\in\R^n$, we can find $M_0>0$ such that $|y|\leq M_0t$. For $k>0$ and $c\in\R$, we set 
$\phi(k):=m(kt,0,ky,kc)+C$, 
where 
$C$ is a constant given by Lemma \ref{lem:sub}. 
Then, for $k,l>0$ with $\max\{kt,lt\}\geq 1$, by Lemma \ref{lem:sub}, 
\begin{equation}\label{phik}
\begin{aligned}
\phi(k+l)&=\, 
m\big((k+l)t,0,(k+l)y, (k+l)c\big)+C\\
&\le\, 
m(kt,0,ky,kc)+m(lt,0,ly,lc)+2C
=\phi(k)+\phi(l). 
\end{aligned}
\end{equation}

Fix $N>0$ large so that $Nt\ge1$, and we set 
\[\phi_{\inf}:=\inf_{k>N}\frac{\phi(k)}{k}.\]
By definition, for each $a>\phi_{\inf}$, there exists $l_0>N$ such that
\[\frac{\phi(l_0)}{l_0}\leq a.\]
For $k>0$, we take $n\in\mathbb N\cup\{0\}$ such that $nl_0<k\leq (n+1)l_0$. Similar to Lemma \ref{bdme}, there exists $M>0$ independent of $n$ such that $\phi(k-nl_0)\leq M$. Since $l_0t\geq 1$, 
by \eqref{phik}, we have 
\[\phi(k)\leq \phi(nl_0)+\phi(k-nl_0)\leq n\phi(l_0)+M,\]
which implies
\[\frac{\phi(k)}{k}< 
\frac{\phi(l_0)}{l_0}+\frac{M}{nl_0}\leq a+\frac{M}{nl_0}.\]
Now, sending $n\to\infty$ and $a\to \phi_{\inf}$, we conclude that the limit of $\frac{\phi(k)}{k}$ as $k\to\infty$ exists, and 
\[
\lim_{k\to\infty}\frac{\phi(k)}{k}=\phi_{\inf}=\inf_{k>N}\frac{\phi(k)}{k}.
\]

Next, we consider the case where $t=0$. By \eqref{mt0}, for all $\ep>0$, we have
\begin{equation*}
\ol{m}(0,x,y,c)=m^\ep(0,x,y,c)=
\begin{cases}
c\quad &\text{ if } x=y,\\  
\infty
\quad &\text{ if } x\neq y.
\end{cases}
\end{equation*}
This completes the proof.
\end{proof}

For $t\geq 0$ and $x\in\R^n$, we define
\begin{equation}\label{defu}
u(x,t):=\inf_{y\in\R^n,\, |x-y|\leq M_0t}\ol{m}(t,x,y,\varphi(y)),
\end{equation}
where $M_0>0$ is given by Proposition \ref{prop:uep=m}. 
It is clear that 
\[u(x,0)=\ol{m}(0,x,x,\varphi(x))=\lim_{\ep\to 0}m^\ep(0,x,x,\varphi(x))=\varphi(x).\]
\begin{lem}
There is $C>0$ depending on $M_0$ and $H$ such that for all $\ep>0$ and $t>0$, we have
\begin{equation}\label{u-uCt}
|u^\ep(x,t)-u(x,t)|\leq Ct.
\end{equation}
\end{lem}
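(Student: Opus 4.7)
The goal is to show that both $u^\ep(x,t)$ and $u(x,t)$ sit within distance of order $t$ from $\varphi(x)$, so that \eqref{u-uCt} follows by the triangle inequality.

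By Proposition \ref{prop:uep=m}, any minimizer $y$ in the formula $u^\ep(x,t) = \inf_{y\in\R^n} m^\ep(t,x,y,\varphi(y))$ satisfies $|x-y|\le M_0 t$, so the infimum is unchanged if restricted to the ball $\{y:|x-y|\le M_0 t\}$; this matches the constraint used in the definition \eqref{defu} of $u(x,t)$. Lemma \ref{bdme} supplies the $\ep$-uniform two-sided bound
\[
\varphi(y)-Ct \;\le\; m^\ep(t,x,y,\varphi(y)) \;\le\; \varphi(y)+Ct
\]
for every $y$ with $|x-y|\le M_0 t$, with $C$ depending only on $M_0$ and $H$. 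Passing $\ep\to 0$ via Corollary \ref{cor:m-bar}, the same inequality holds with $\overline m$ in place of $m^\ep$.

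Combining this with the Lipschitz estimate $|\varphi(y)-\varphi(x)|\le \|D\varphi\|_{L^\infty(\R^n)}|x-y|\le \|D\varphi\|_{L^\infty(\R^n)} M_0 t$ on the same ball, I obtain
\[
|m^\ep(t,x,y,\varphi(y))-\varphi(x)| \le C_1 t, \qquad |\overline m(t,x,y,\varphi(y))-\varphi(x)| \le C_1 t,
\]
uniformly in $\ep$ and in $y$ with $|x-y|\le M_0 t$, where $C_1 := C + \|D\varphi\|_{L^\infty(\R^n)} M_0$. Taking the infimum over $\{y:|x-y|\le M_0 t\}$ in each inequality yields
\[
|u^\ep(x,t)-\varphi(x)| \le C_1 t \qquad \text{and}\qquad |u(x,t)-\varphi(x)| \le C_1 t,
\]
so the triangle inequality gives $|u^\ep(x,t)-u(x,t)|\le 2 C_1 t$, which is \eqref{u-uCt} with $C:=2C_1$.

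No serious obstacle arises in this argument; the only point requiring care is the $\ep$-independence of the constant in Lemma \ref{bdme}, which is already built into its statement, ensuring that the bound survives the passage to the limit $\ep\to 0$ used to handle $\overline m$.
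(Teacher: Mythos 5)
Your argument is correct: the restriction of the infimum to the ball $\{|x-y|\le M_0t\}$ (via Proposition \ref{prop:uep=m} and the existence of minimizers from Theorem \ref{thm:rep}), the two-sided bound of Lemma \ref{bdme}, and its persistence for $\overline m$ under the pointwise limit of Corollary \ref{cor:m-bar} are exactly the right ingredients, and the inf over the ball of a function trapped in $[\varphi(x)-C_1t,\varphi(x)+C_1t]$ indeed stays in that interval. The paper arranges these same ingredients slightly differently: rather than anchoring both $u^\ep(x,t)$ and $u(x,t)$ to the common reference $\varphi(x)$, it compares $m^\ep$ and $\overline m$ at one and the same point $y$ (a near-minimizer $y_\delta$ for $u$, respectively an exact minimizer $y_\ep$ for $u^\ep$), so the terms $\varphi(y)$ cancel identically and the resulting constant depends only on $M_0$ and $H$, as stated in the lemma. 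Your version pays an extra $\|D\varphi\|_{L^\infty(\R^n)}M_0t$ for passing through $\varphi(x)$, so your constant also depends on $\|D\varphi\|_{L^\infty(\R^n)}$; this is harmless for Theorem \ref{thm1} (whose constant depends on $\|D\varphi\|_{L^\infty(\R^n)}$ anyway, and $M_0$ itself already does), but it is marginally weaker than the dependence claimed in the lemma, and the paper's same-point cancellation is how that sharper dependence is obtained. In exchange, your route gives the slightly stronger intermediate statement $|u^\ep(x,t)-\varphi(x)|\le C_1t$ and $|u(x,t)-\varphi(x)|\le C_1t$, which is a clean way to see the estimate.
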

\begin{proof}
For each $\delta>0$, we take $y_\delta\in\R^n$ with $|x-y_\delta|\leq M_0t$ such that $\ol{m}(t,x,y_\delta,\varphi(y_\delta))\leq u(x,t)+\delta$. Then, 
\[u^\ep(x,t)-u(x,t)\leq m^\ep(t,x,y_\delta,\varphi(y_\delta))-\ol{m}(x,y_\delta,\varphi(y_\delta))+\delta.\]
By Lemma \ref{bdme} and Corollary \ref{cor:m-bar}, that $\ol{m}$ is the limit of $m^\ep$, there is $C>0$ depending on $M_0$ and $H$ such that $\ol m(x,y_\delta,\varphi(y_\delta))\geq \varphi(y_\delta)-Ct$. We get
\[u^\ep(x,t)-u(x,t)\leq \varphi(y_\delta)+Ct-(\varphi(y_\delta)-Ct)+\delta=2Ct+\delta.\]
Sending $\delta\to 0$ yields $u^\ep(x,t)-u(x,t)\le 2Ct$. 

We take a point $y_\ep$ such that $u^\ep(x,t)=m^\ep(t,x,y_\ep,\varphi(y_\ep))$. By Proposition \ref{prop:uep=m}, $|y_\ep-x|\leq M_0t$. We have
\[u(x,t)-u^\ep(x,t)\leq \ol{m}(t,x,y_\ep,\varphi(y_\ep))-m^\ep(t,x,y_\ep,\varphi(y_\ep))\leq 2Ct.\]
This completes the proof.
\end{proof}

\begin{lem}\label{lem:sup}
Fix $M_0>0$. For $t\ge 1$ and $y\in \R^n$ with $|y|\le M_0t$, we have the following superadditivity property{\rm:} there is $C>0$ depending on $M_0$ such that
\[2m(t,0,y,c)\le m(2t,0,2y,2c)+C.\]
\end{lem}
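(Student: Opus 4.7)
\smallskip\noindent\textbf{Proof plan.} My strategy is to imitate, in reverse, the ``curve surgery'' construction of Lemma~\ref{lem:sub}. Given a minimizer $\gamma\in\cC(0,2y;2t,0)$ of $m(2t,0,2y,2c)$ and its associated value $\xi(s):=m(s,\gamma(s),2y,2c)$ solving $\dot\xi=L(\gamma,\xi,\dot\gamma)$ with $\xi(0)=2c$, I would cut $\gamma$ at a well-chosen time $s^*\approx t$ produced by Burago's cutting lemma, and assemble two competitor curves $\eta_1,\eta_2\in\cC(0,y;t,0)$ for $m(t,0,y,c)$ out of the two pieces. The assembly uses $\Z^{n+1}$-lattice shifts in the joint $(y,r)$-space (free by (L1)--(L2)), short straight-line segments to correct endpoints, and a middle timing-adjustment segment analogous to the $\alpha_d$ of Lemma~\ref{lem:sub} to make the total duration of each $\eta_i$ exactly $t$.

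\smallskip\noindent\textbf{First steps.} I would begin by producing a uniform speed bound. Lemma~\ref{bdme} gives $m(2t,0,2y,2c)\le 2c+2Ct$, and combined with the superlinearity (L3), this forces $|\dot\gamma|\le M$ a.e.\ for some $M$ depending only on $M_0$ and $H$. Next, applying Burago's cutting lemma to the lifted trajectory $s\mapsto(\gamma(s),\xi(s))\in\R^{n+1}$ (whose ambient periodicity lattice is $\Z^{n+1}$), I would extract $s^*\in[t-C_0,t+C_0]$ together with integers $(k,n_0)\in\Z^n\times\Z$ such that
\[
|\gamma(s^*)-y-k|+|\xi(s^*)-c-n_0|\le C_0,
\]
for a constant $C_0=C_0(M_0,H)$. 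Although $k$ and $n_0$ may grow with $t$, the point is that shifts by $\Z^{n+1}$ preserve $L$ and therefore come at no action cost, exactly mirroring the role of the lattice shift $k$ (with $k-ly\in Y$) and the integer $n_1$ in the proof of Lemma~\ref{lem:sub}.

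\smallskip\noindent\textbf{Conclusion via ODE comparison.} With $\eta_1,\eta_2$ so constructed, let $\xi_{\eta_i}$ solve $\dot\xi=L(\eta_i,\xi,\dot\eta_i)$ with $\xi_{\eta_i}(0)=c$. The $\Z$-periodicity of $L$ in $r$ (L2) together with the Gronwall/comparison argument for $\dot X=L(\eta_i(s),X,\dot\eta_i(s))$ already used in the proof of Lemma~\ref{lem:sub} allows the integer shift $n_0$ to be absorbed, so that the $\xi$-evolutions of $\eta_1$ and $\eta_2$ are dominated by suitable shifts of $\xi|_{[s^*,2t]}$ and $\xi|_{[0,s^*]}$ respectively, up to the bounded contributions of the short connectors. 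Summing yields
\[
\xi_{\eta_1}(t)+\xi_{\eta_2}(t)\le \xi(2t)+C=m(2t,0,2y,2c)+C.
\]
Applying Lemma~\ref{m<dotu} to each competitor gives $m(t,0,y,c)\le \xi_{\eta_i}(t)$ for $i=1,2$, and summing produces $2m(t,0,y,c)\le m(2t,0,2y,2c)+C$.

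\smallskip\noindent\textbf{Main obstacle.} The delicate step is the third one: arranging the lattice shifts and connectors so that every quantity in the surgery is controlled by a constant depending only on $M_0$ and $H$. The shift $k$ can a priori be of order $t$, so a naive straight-line joint between the endpoint of a shifted piece and the target $0$ could cost $O(t)$ in length (and hence action by (L3)). Handling this requires exploiting Burago's lemma at every potential glued joint, so that the lattice discrepancy is always bounded and only $O(1)$-long connectors are needed, exactly as in Lemma~\ref{lem:sub}. The hypothesis $t\ge 1$ is essential here to provide enough room inside each half of $\gamma$ for inserting the middle timing connector without disturbing the endpoint corrections.
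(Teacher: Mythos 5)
Your overall architecture (surgery on a minimizer of $m(2t,0,2y,2c)$, $\Z^{n+1}$-shifts, ODE comparison with integer shifts, and Lemma \ref{m<dotu} at the end) matches the paper's, but the way you invoke Burago's cutting lemma leaves a genuine gap that your own ``main obstacle'' paragraph does not close. Burago's lemma does not produce a single time $s^*\approx t$ at which the lifted trajectory is usable for the surgery: the estimate $|\gamma(s^*)-y-k|+|\xi(s^*)-c-n_0|\le C_0$ with unconstrained $(k,n_0)\in\Z^n\times\Z$ is trivially true for every $s^*$ and carries no information, while what your construction actually needs is a single lattice shift making \emph{both} endpoints of each piece match $y$ and $0$ up to $O(1)$ --- equivalently, that the minimizer passes within $O(1)$ of the point $y$ at some time within $O(1)$ of $t$. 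Nothing forces that; after shifting a piece to fix one endpoint, the other endpoint is off by the lattice vector $k$, which can be of size $O(t)$, so the $O(1)$-connector strategy fails, exactly as you note. Saying one should ``exploit Burago's lemma at every glued joint'' is not an argument. The actual content of the lemma, and the way the paper uses it, is to select at most $\frac{n+3}{2}$ \emph{disjoint subintervals} $[a_i,b_i]\subset[0,2t]$ whose increments in $(\gamma,\ \mathrm{time},\ w)$ sum \emph{exactly} to $\big(y,\,t,\,\frac{w(2t)-2c}{2}\big)$; because the halving is exact, the concatenation of the periodically shifted pieces has all joint discontinuities inside the unit cube and ends within $k\sqrt n$ of the target, so only boundedly many $O(1)$ connectors (plus one timing segment $\alpha_d$ inserted on a good interval $[d,d+\frac14]$ where the local action is $O(1)$) are needed. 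This also makes the paper's competitor a \emph{single} curve in $\cC(0,y;t,0)$, yielding $m(t,0,y,c)-c\le\frac12\big(m(2t,0,2y,2c)-2c\big)+C$ directly, rather than two curves built from the two halves of $[0,2t]$; your two-curve variant would still face the same endpoint problem for each half.

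A secondary issue: your first step claims a pointwise bound $|\dot\gamma|\le M$ a.e.\ from the action bound and superlinearity. Superlinearity together with $m(2t,0,2y,2c)\le 2c+Ct$ only gives integral bounds such as $\int_0^{2t}|\dot\gamma(s)|\,ds\le Ct$ and, as in the paper, the existence of a subinterval of length $\frac14$ with $O(1)$ local action; since $L$ is merely continuous and convex (not strictly convex or Tonelli), no uniform Lipschitz bound on minimizers is available, and none is needed in the paper's argument.
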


\begin{proof}
We first take a minimizer $\gamma:[0,2t]\to\mathbb R^n$ of $m(2t,0,2y,2c)$. 
Set
\[w(s):=m(s,\gamma(s),2y,2c).\]
By using Burago's cutting lemma, 
we can take a collection of disjoint intervals $\{[a_i,b_i]\}_{1\le i\le k}\subset [0,2t]$ with $k\le\frac{n+3}{2}$ such that
\[
\sum_{i=1}^k \Big(\gamma(b_i)-\gamma(a_i),b_i-a_i,w(b_i)-w(a_i)\Big)=\Big(y,t,\frac{w(2t)-2c}{2}\Big). 
\]
Then, we consider a periodic shift $(\tilde\gamma,\tilde w)|_{[t_{i-1},t_i]}$ of $(\gamma,w)|_{[a_i,b_i]}$ satisfying 
\[\tilde\gamma(t^+_i)-\tilde\gamma(t^-_i)\in Y\quad \text{for all} \ i\in\{0,\dots,k\},\]
where
\[t_0:=0, \quad \tilde{\gamma}(t_0^-):=y,\quad \tilde\gamma(t_k^+):=0,\quad t_j:=\sum_{i=1}^j(b_i-a_i).\]
Then, we have $t_k=t$ and $|\tilde \gamma(t^-_k)|\le k\sqrt{n}$. 
Also, note that $\tilde\gamma(t^+_{i-1})-\gamma(a_i)\in\Z^n$, we set  
\begin{align*}\tilde{w}(s)&:=m(s-t_{i-1}+a_i,\tilde{\gamma}(s),2y+\tilde\gamma(t^+_{i-1})-\gamma(a_i),2c)+n_i
\\ &=m(s-t_{i-1}+a_i,\gamma(s-t_{i-1}+a_i),2y,2c)+n_i=w(s-t_{i-1}+a_i)+n_i,\quad s\in[t_{i-1},t_i],\end{align*} 
where $n_i\in\Z$ to be chosen later.

We first show the property of $\tilde\gamma$ similar to \eqref{Cd}. Since $|y|\le M_0t$, we take a straight line $\alpha_0:[0,2t]\to\R^n$ connecting $2y$ and $0$ to find
\begin{align*}
m(2t,0,2y,2c)-2c&\leq \int_0^{2t}L(\alpha_0(s),m(s,\alpha_0(s),2y,2c),\dot\alpha_0(s))\, ds
\\ &\leq \Big(\max_{y\in\R^n,\,|v|\le M_0}L(y,0,v)+K\Big)2t,
\end{align*}
which implies
\begin{align*}
&\Big(\max_{y\in\R^n,\,|v|\le M_0}L(y,0,v)+K\Big)2t
\\ &\geq \int_{\cup_{i=1}^k[a_i,b_i]}L(\gamma(s),w(s),\dot\gamma(s))\, ds+\int_{[0,2t]\backslash \cup_{i=1}^k[a_i,b_i]}L(\gamma(s),w(s),\dot\gamma(s))\, ds
\\ &\geq \sum_{i=1}^k\int_{a_i}^{b_i}L(\gamma(s),w(s),\dot\gamma(s))\, ds+\Big(\min_{y\in\R^n,\,|v|\in\R^n}L(y,0,v)-K\Big)t
\\ &=\int_0^t L(\tilde \gamma(s),\tilde w(s),\dot{\tilde\gamma}(s))\, ds+\Big(\min_{y\in\R^n,\,|v|\in\R^n}L(y,0,v)-K\Big)t,
\end{align*}
where for the second inequality, we used $\sum_{i=1}^k(b_i-a_i)=t$, for the last equality, we recall that $(\tilde\gamma,\tilde w)|_{[t_{i-1},t_i]}$ is a $\Z^{n+1}$-periodic shift of $(\gamma,w)|_{[a_i,b_i]}$. Similar to \eqref{Cd}, we can find $d\in\{0,\frac{1}{4},\frac{1}{2},\frac{3}{4},\dots,\lfloor t\rfloor-\frac{1}{4}\}$ such that
\begin{equation*}
\int_d^{d+\frac{1}{4}}L(\tilde \gamma(s),\tilde w(s),\dot{\tilde \gamma}(s))\, ds\le C_d,
\end{equation*}
for some $C_d>0$. 
We take $i_d,\bar i_d\in\{0,1,\dots,k\}$ with $i_d\leq \bar i_d-1$ satisfying
\[
t_{i_d}\leq d< d+\frac{1}{4}\leq t_{\bar i_d}. 
\] 
Here, if $i_d=\bar i_d-1$, then $\tilde\gamma(d)$ and $\tilde\gamma(d+\frac{1}{4})$ belong to the same continuous piece of $\tilde\gamma$. Similar to \eqref{d1/4-d}, we can show that
\[
\int_d^{d+\frac{1}{4}}|\dot{\tilde\gamma}(s)|\, ds
\]
is bounded by a constant $M_d$. Note that $\tilde\gamma(t^+_i)-\tilde\gamma(t^-_i)\in Y$ for all $i\in\{0,\dots,k\}$, we have \[\left|\tilde \gamma\left(d+\frac{1}{4}\right)-\tilde\gamma(d)\right|\leq M_d+k\sqrt{n}.\] Define
\begin{equation*}
\eta_0(s):=
\begin{cases}
\tilde\gamma(s) \quad&\text{for} \ 0\le s\le d,
\\ \alpha_d(s-d) \quad& \text{for} \ d\le s\le d+\frac{1}{8},
\\ \tilde \gamma(s+\frac{1}{8}) \quad& \text{for} \ d+\frac{1}{8}\le s\le t-\frac{1}{8},
\end{cases}
\end{equation*}
where $\alpha_d:[0,\frac{1}{8}]\to\R^n$ is a straight line satisfying $\alpha_d(0)=\tilde \gamma(d)$ and $\alpha_d(\frac{1}{8})=\tilde \gamma(d+\frac{1}{4})$. It is clear that $|\dot{\alpha}_d|$ is bounded. Here, if $d=t_{i_d}$ (resp. $d+\frac{1}{4}=t_{\bar i_d}$), we take $\alpha_d$ satisfying $\alpha_d(0)=\tilde\gamma(t_{i_d}^+)$ (resp. $\alpha_d(\frac{1}{8})=\tilde\gamma(t_{\bar i_d}^-)$). The discontinuous points of $\eta_0$ are denoted by $\{s_0,\dots,s_{i_d},s_{\bar i_d},\dots,s_k\}$ in this order, where
\begin{equation*}
s_i=
\begin{cases}
t_i \quad&\text{for} \ 0\le t_i\leq d,
\\ t_i-\frac{1}{8}\quad& \text{for} \ d+\frac{1}{4}\leq t_i\le t,
\end{cases}
\end{equation*}
and we recall that $t_{i_d}\leq d$, $t_{\bar i_d}\geq d+\frac{1}{4}$. 
We then connect $\eta_0(s^\pm_i)$ by straight lines $\alpha_i$ to construct a continuous curve $\eta:[0,t]\to\mathbb R^n$ with $\eta(0)=y$ and $\eta(t)=0$. More precisely, we denote by $\{\bar s_i\}_{0\leq i\leq k'}$ with $k'=k-\bar i_d+i_d+1$ the set of discontinuous points of $\eta_0$ in order, and for $i\in\{0,\dots,k'\}$, $\alpha_i:[0,\frac{1}{8(k'+1)}]\to\R^n$ are straight lines, and we set
\begin{equation*}
\begin{cases}
\eta(s+\bar s_i+\frac{i}{8(k'+1)})=\alpha_i(s)\quad&\text{for}\ s\in[0,\frac{1}{8(k'+1)}],
\\ \eta(s+\frac{i+1}{8(k'+1)})=\eta_0(s)\quad&\text{for} \ \bar s_i\le s\le \bar s_{i+1}.
\end{cases}
\end{equation*}
See Figure \ref{Fig:3}. 
By the construction of $\tilde\gamma$, we know that $\{\dot\alpha_i\}_{0\leq i\leq k'}$ are bounded.
\begin{figure}[htb]
\centering
\includegraphics[width=11cm]{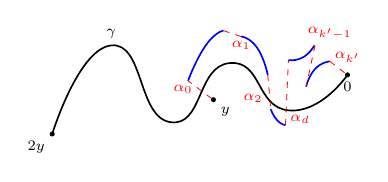}
\caption{Image of $\eta$.} \label{Fig:3}
\end{figure}

\smallskip
Now, we consider the solution $v(s)$ of
\begin{equation*}
\begin{cases}
\dot v(s)=L(\eta(s),v(s),\dot\eta(s))\quad \text{for} \ s\in[0,t],
\\ v(0)=c.
\end{cases}
\end{equation*}
Without loss of generality, we assume $i_d>1$. We choose suitable $n_1\in\mathbb Z$ such that
\[w(a_1)+n_1-v\Big(\frac{1}{8(k'+1)}\Big)\in[0,1).\]
Define $\tilde v(s)=v(s+\frac{1}{8(k'+1)})$, we have
\begin{equation*}
\begin{cases}
\dot{\tilde v}(s)=L(\tilde \gamma(s),\tilde v(s),\dot{\tilde \gamma}(s))\quad \text{for} \ s\in[0,t_1],
\\ \tilde v(0)=v\left(\frac{1}{8(k'+1)}\right).
\end{cases}
\end{equation*}
Note that we also have
\begin{equation*}
\begin{cases}
\dot{\tilde w}(s)=L(\tilde \gamma(s),\tilde w(s),\dot{\tilde \gamma}(s))\quad \text{for} \ s\in[0,t_1],
\\ \tilde w(0^+)=w(a_1)+n_1.
\end{cases}
\end{equation*}
By the comparison principle of the following ODE
\[\dot X=F(X(s),s),\quad \text{for\ a.e.\ }s\in [0,t_1],\]
where $F(X,s):=L(\tilde\gamma(s),X,\dot{\tilde\gamma}(s))$, we have $\tilde v(t_1)\leq \tilde w(t_1^-)$. Recall that $t_1=s_1$ since we assume $i_d>1$, we get
\[v\Big(s_1+\frac{1}{8(k'+1)}\Big)\leq w(b_1)+n_1,
\]
which implies
\begin{equation}\label{v1}
v\Big(s_1+\frac{1}{8(k'+1)}\Big)-v\Big(\frac{1}{8(k'+1)}\Big)\le w(b_1)-w(a_1)+1.
\end{equation}
Similarly, we choose $n_i\in\mathbb Z$ such that $w(a_i)+n_i-v(s_{i-1}+\frac{i}{8(k'+1)})\in[0,1)$ for $2\le i\le i_d$. Then we get
\begin{equation}\label{v2}
v\Big(s_{i}+\frac{i}{8(k'+1)}\Big)-v\Big(s_{i-1}+\frac{i}{8(k'+1)}\Big)\le w(b_{i})-w(a_{i})+1.
\end{equation}
We also have
\begin{equation}\label{v3}
v\Big(d+\frac{i_d+1}{8(k'+1)}\Big)-v\Big(s_{i_d}+\frac{i_d+1}{8(k'+1)}\Big)\le w(d+a_{i_d+1}-t_{i_d})-w(a_{i_d+1})+1, 
\end{equation}
and
\begin{equation}\label{v4}
v\Big(s_{\bar i_d}+\frac{i_d+1}{8(k'+1)}\Big)-v\Big(d+\frac{1}{8}+\frac{i_d+1}{8(k'+1)}\Big)\le w(b_{\bar i_d})-w\Big(d+\frac{1}{4}+b_{\bar i_d}-t_{\bar i_d}\Big)+1.
\end{equation}
Here we note that, since we connect $\tilde\gamma(d)$ and $\tilde\gamma(d+\frac{1}{4})$ directly by $\alpha_d$, there are $\bar i_d-i_d-1$ discontinuous points we do not need to connect. For $\bar i_d\le i\le k-1$, we have
\begin{equation}\label{v5}
v\Big(s_{i+1}+\frac{i-\bar i_d+i_d+2}{8(k'+1)}\Big)-v\Big(s_{i}+\frac{i-\bar i_d+i_d+2}{8(k'+1)}\Big)\le w(b_{i+1})-w(a_{i+1})+1.
\end{equation}
The proof of \eqref{v2}-\eqref{v5} is similar to that of \eqref{v1}, we omit it. We have
\begin{align*}
\sum_{i=i_d+1}^{\bar i_d}(w(b_i)-w(a_i))&=\sum_{i=i_d}^{\bar i_d-1}(\tilde w(t_{i+1}^-)-\tilde w(t_i^+))
\\ &=\tilde w(t_{\bar i_d}^-)-\tilde w\Big(d+\frac{1}{4}\Big)+\tilde w(d)-\tilde w(t_{i_d}^+)+\int_{d}^{d+\frac{1}{4}}L(\tilde\gamma(s),\tilde w(s),\dot{\tilde\gamma}(s))\, ds
\\ & \geq w(b_{\bar i_d})-w\Big(d+\frac{1}{4}+b_{\bar i_d}-t_{\bar i_d}\Big)+w(d+a_{i_d+1}-t_{i_d})-w(a_{i_d+1})
\\ &\hspace{6.8cm} +\frac{1}{4}\Big(\min_{y\in\R^n,\,v\in\R^n}L(y,0,v)-K\Big).
\end{align*}
Similar to \eqref{w4}, we get
\[
v\Big(d+\frac{1}{8}+\frac{i_d+1}{8(k'+1)}\Big)-v\Big(d+\frac{i_d+1}{8(k'+1)}\Big)\leq \int_0^{\frac{1}{8}}L(\alpha_d,0,\dot\alpha_d(s))\, ds+\frac{K}{8}\leq M_1
\]
for some $M_1>0$, since $\dot\alpha_d$ is bounded. Note that for $i\in\{0,\dots,k'\}$,
\[v\Big(\bar s_i+\frac{i+1}{8(k'+1)}\Big)-v\Big(\bar s_i+\frac{i}{8(k'+1)}\Big)=\int_0^{\frac{1}{8(k'+1)}}L\bigg(\alpha_i(s),v\Big(s+\bar s_i+\frac{i}{8(k'+1)}\Big),\dot\alpha_i(s)\bigg)\, ds\]
are bounded since $\{\dot\alpha_i\}_{0\le i\le k'}$ are bounded. Here we recall that $\bar s_{k'}=s_k=t-\frac{1}{8}$ and $k'=k-\bar i_d+i_d+1$.

\begin{figure}[H]
\centering
\includegraphics[width=16.5cm]{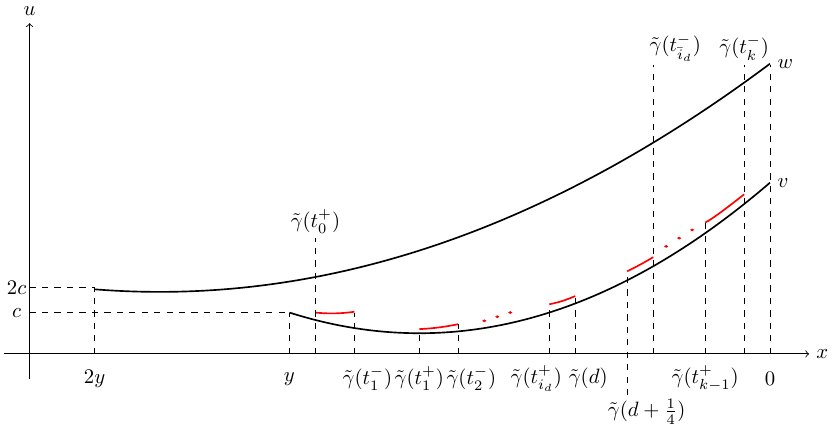}
\caption{Graphs of $v$, $w$, and periodic shifts of parts of the graph of $w$.} \label{Fig:4}
\end{figure}

Summing up \eqref{v1}--\eqref{v5} and using Lemma \ref{m<dotu}, we get
\begin{align*}
m(t,0,y,c)-c&\le v(t)-c\le \sum_{i=1}^k (w(b_i)-w(a_i))+C
\\ &=\frac{w(2t)-2c}{2}+C=\frac{1}{2}m(2t,0,2y,2c)-c+C
\end{align*}
for some $C>0$.
\end{proof}

\begin{cor}\label{cor:diff-m}
Let $m^\ep$ and $\overline{m}$ be the functions defined in Corollary {\rm\ref{cor:m-bar}}. 
Let $M_0>0$. There is $C>0$ which depends on $M_0$ such that 
\[
\bigg|m^\ep(t,x,y,c)-\ol{m}(t,x,y,c)\bigg|\le C\ep
\]
for all $\ep>0$, $t\ge\ep$, and $x,y\in\R^n$ with 
$|x-y|\le M_0t$. 
\end{cor}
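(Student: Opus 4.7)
The plan is a Fekete-type iteration. Set $a_k := m(kt, kx, ky, kc)$ for $k > 0$, so that $m^\ep(t,x,y,c) = a_{1/\ep}/(1/\ep)$, and Corollary \ref{cor:m-bar} reads $\overline{m}(t,x,y,c) = \lim_{k\to\infty} a_k/k$. The goal is to prove
\[
\Big|\frac{a_k}{k} - \overline{m}(t,x,y,c)\Big| \leq \frac{C}{k} \quad\text{for every } k \geq 1/t,
\]
which specialized to $k = 1/\ep$ with $\ep \in (0, t]$ is exactly the desired bound.

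The heart of the matter is a doubling estimate $|a_{2k} - 2 a_k| \leq C$ for $kt \geq 1$. Although Lemmas \ref{lem:sub} and \ref{lem:sup} are stated with $x = 0$, both extend to general $x$ with $|x - y| \leq M_0 t$ via only a cosmetic change to the gluing argument: rather than shifting a single minimizer by an integer vector, one shifts the $\sigma$-minimizer by $k' \in \Z^n$ with $k' - ly \in Y$ and, simultaneously, the $l$-minimizer by $k'' \in \Z^n$ with $k'' - \sigma x \in Y$. Then all three transitions---from $(\sigma+l)y$ to the start of the shifted $\sigma$-minimizer (distance $\leq \sqrt{n}/2$), between the shifted pieces (distance $\leq \sqrt{n}$), and from the end of the shifted $l$-minimizer to $(\sigma+l)x$ (distance $\leq \sqrt{n}/2$)---remain bounded uniformly in $\sigma, l, x, y$. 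The rest of the argument (the $\tfrac{1}{4}$-pigeonhole choice of $d$, the integer $\Z$-shifts $n_j$ inserted in the auxiliary ODE to bypass Gronwall blow-up, and the patching via Lemma \ref{m<dotu}) carries through verbatim. The analogous extension of Lemma \ref{lem:sup} is obtained by applying Burago's cutting lemma to a minimizer from $2ky$ to $2kx$ with target displacement $k(x-y)$ and using $\Z^n$-shifts to align consecutive extracted pieces. Choosing $\sigma = l = k$ in the extended subadditivity and combining with the extended superadditivity yields $|a_{2k} - 2 a_k| \leq C$ for $kt \geq 1$.

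Granted the doubling estimate, dividing by $2k$ and telescoping along the dyadic sequence $k, 2k, 4k, \ldots$ produces
\[
\Big|\frac{a_{2^n k}}{2^n k} - \frac{a_k}{k}\Big| \leq \sum_{j=1}^n \frac{C}{2^j k} \leq \frac{C}{k}.
\]
Sending $n \to \infty$ and invoking Corollary \ref{cor:m-bar} to identify $\lim_n a_{2^n k}/(2^n k) = \overline{m}(t,x,y,c)$ completes the proof.

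The main obstacle I anticipate is the careful verification that Lemmas \ref{lem:sub} and \ref{lem:sup} extend to general $x$ as sketched above, in particular coordinating the two integer shifts $k'$ and $k''$ with the $\Z$-shifts in the auxiliary ODE so that every endpoint transition and every step of the ODE comparison principle remains $O(1)$. Once this bookkeeping is in place, the Fekete telescoping is routine.
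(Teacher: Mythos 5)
Your proposal is correct and is essentially the paper's own argument: the paper obtains the corollary directly from Lemmas \ref{lem:sub} and \ref{lem:sup} by the same Fekete/dyadic-doubling telescoping (it simply cites \cite[Theorem 4.2]{HJMT}), with the limit along the dyadic sequence identified via Corollary \ref{cor:m-bar}, exactly as you do. The only delicate point you flag---extending the two lemmas from endpoint $0$ to a general endpoint $x$ with $|x-y|\le M_0t$ by shifting both minimizers by integer vectors and adding one extra bounded connecting segment (with a slightly adjusted time budget taken from the cut quarter-interval)---is precisely what the paper leaves implicit in calling the corollary ``straightforward,'' and your bookkeeping for it is the intended and correct modification.
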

This is a straightforward result of Lemmas \ref{lem:sub}, \ref{lem:sup}. For a similar proof, one can refer to  \cite[Theorem 4.2]{HJMT}.

\bigskip
We finally give a proof of Theorem \ref{thm1}. 
\begin{proof}[Proof of Theorem {\rm\ref{thm1}}] 
Here we recall the definition of $u(x,t)$, which is given by \eqref{defu}. Note that $u^\ep(x,0)=u(x,0)=\varphi(x)$. If $0<t\le \ep$, by \eqref{u-uCt} we have \[|u^\ep(x,t)-u(x,t)|\le Ct\le C\ep.\] We only need to consider the case where $t>\ep$.

On one hand, let $y_\ep$ be a point such that $u^\ep(x,t)=m^\ep(t,x,y_\ep,\varphi(y_\ep))$. By Proposition \ref{prop:uep=m}, we have $|y_\ep-x|\le M_0t$. 
By Corollary \ref{cor:diff-m}, we obtain 
\[u(x,t)\le \ol{m}(t,x,y_\ep,\varphi(y_\ep))\le \varepsilon m\Big(\frac{t}{\varepsilon},\frac{x}{\varepsilon},\frac{y_\ep}{\varepsilon},\frac{\varphi(y_\ep)}{\varepsilon}\Big)+C\ep=u^\ep(x,t)+C\ep.\]

On the other hand, for all $\delta>0$, let $y_\delta$ with $|y_\delta-x|\le M_0t$ be a point such that $\overline{m}(t,x,y_\delta,\varphi(y_\delta))\leq u(x,t)+\delta$, we get
\[
u^\ep(x,t)\le \varepsilon m\Big(\frac{t}{\varepsilon},\frac{x}{\varepsilon},\frac{y_\delta}{\varepsilon},\frac{\varphi(y_\delta)}{\varepsilon}\Big)
\le \ol{m}(t,x,y_\delta,\varphi(y_\delta))+C\ep
\le u(x,t)+\delta+C\ep.
\]
Let $\delta\to 0$ we get $u^\ep(x,t)\le u(x,t)+C\ep$.

The rate $O(\ep)$ is optimal, since (H1) includes the case where $H(y,r,p)$ is independent of $r$.
See \cite[Proposition 4.3]{MTY} for an example.
\end{proof}

\section{Effective Lagrangian, Hamiltonian, and correctors}\label{sec:effective}
Our main goal in this section is to give the proof of Theorem \ref{thm:qualitative}.

\subsection{Effective Lagrangian and Hamiltonian}
\begin{lem}\label{lem:form}
Let $\ol m$ be defined as in Corollary {\rm\ref{cor:m-bar}}.
Then, for $x,y\in \R^n$, $t>0$, and $c\in \R$, we can rewrite as 
\[
\overline{m}(t,x,y,c)=c+t\overline{L}\left(\frac{x-y}{t}\right),
\]
for a convex function $\overline{L}:\R^n \to \R$ which is superlinear.
\end{lem}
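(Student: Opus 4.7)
The plan is to derive the stated structure of $\overline{m}$ by exploiting two exact symmetries inherited from the $\Z^{n+1}$-periodicity of $L$, together with the natural parabolic scaling, and then to extract convexity and superlinearity of $\overline{L}$ from the variational machinery already developed. Concretely, I first establish that $\overline{m}(t,x,y,c)-c$ is independent of $c$ and depends on $(x,y)$ only through $x-y$, then verify positive $1$-homogeneity, combine these to read off the form $c+t\overline{L}((x-y)/t)$, and finally prove convexity via a generalized subadditivity and superlinearity via (L3).

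For the independence in $c$, the $\Z$-periodicity of $L$ in the unknown variable (property (L2)) implies that if $\xi_\gamma$ solves \eqref{dotu} with $\xi_\gamma(0)=c$, then $\xi_\gamma+k$ solves the same ODE with initial value $c+k$ for any $k\in\Z$; hence $m(\tau,\xi,\eta,\gamma+k)=m(\tau,\xi,\eta,\gamma)+k$, which rescales to $m^\ep(t,x,y,c+\ep k)=m^\ep(t,x,y,c)+\ep k$. Given $c,c'\in\R$, taking the subsequence $\ep_n:=(c'-c)/n$ with $k=n$ produces an exact equality along this sequence, and passing to the limit via Corollary \ref{cor:m-bar} yields $\overline{m}(t,x,y,c')-\overline{m}(t,x,y,c)=c'-c$. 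The analogous $(x,y)$-translation invariance follows from the $\Z^n$-periodicity of $L$ in $y$: the identity $m(\tau,\xi+z,\eta+z,\gamma)=m(\tau,\xi,\eta,\gamma)$ for $z\in\Z^n$ rescales and extends to all $w\in\R^n$ by a similar subsequence argument with rational approximation. The elementary scaling $m^\ep(\lambda t,\lambda x,\lambda y,\lambda c)=\lambda\, m^{\ep/\lambda}(t,x,y,c)$ then gives, in the limit, $\overline{m}(\lambda t,\lambda x,\lambda y,\lambda c)=\lambda\overline{m}(t,x,y,c)$, and combining these three properties yields $\overline{m}(t,x,y,c)=c+t\overline{L}((x-y)/t)$ with $\overline{L}(v):=\overline{m}(1,v,0,0)$.

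For convexity, I would extend the curve-surgery proof of Lemma \ref{lem:sub} to displacements in different directions: given $t_1,t_2>0$ with $\max\{t_1,t_2\}\geq 1$ and $y_1,y_2\in\R^n$ with $|y_i|\leq M_0 t_i$, take a minimizer for $m(t_1,0,y_1,c)$, shift it by $k\in\Z^n$ chosen so that $k-y_2\in Y:=[-\tfrac{1}{2},\tfrac{1}{2}]^n$, and splice with a minimizer for $m(t_2,0,y_2,c)$ via short connecting straight lines and $\Z^{n+1}$-periodic vertical adjustments of the potential $\xi$ exactly as in Lemma \ref{lem:sub}. This yields
\[
m(t_1+t_2,0,y_1+y_2,c)\leq m(t_1,0,y_1,c)+m(t_2,0,y_2,c)+C,
\]
with an $O(1)$ extra time absorbed by the continuity of $\overline{m}$. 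Applying this with $t_i:=\theta_i t/\ep$ and $y_i:=\theta_i t v_i/\ep$ for $\theta_1+\theta_2=1$, multiplying by $\ep$, and sending $\ep\to 0$ through Corollary \ref{cor:m-bar} gives $t\overline{L}(\theta_1 v_1+\theta_2 v_2)\leq \theta_1 t\overline{L}(v_1)+\theta_2 t\overline{L}(v_2)$, i.e., convexity. Superlinearity is immediate from (L3): for any $M>0$ there is $K_M$ with $L(y,r,v)\geq M|v|-K_M$ everywhere, so any $\gamma\in\cC(0,y;t,0)$ contributes at least $M|y|-(K_M+K)t$ to the integral in \eqref{minf}; rescaling and passing to the limit gives $\overline{L}(v)\geq M|v|-(K_M+K)$, which is superlinear since $M$ is arbitrary.

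The main obstacles I anticipate are, first, justifying the limit passages in the two translation invariances, since $m^\ep$ is only Lipschitz in $c$ with constant $e^{Kt/\ep}$ that blows up as $\ep\to 0$, forcing reliance on exact shift invariance along carefully chosen subsequences rather than on continuity; and second, verifying that the curve-surgery proof of Lemma \ref{lem:sub} adapts cleanly to different-direction displacements, the delicate point being the repeated ODE-comparison step that aligns the potentials $\xi$ via $\Z$-periodicity in $r$ across each junction.
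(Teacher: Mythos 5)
Your proposal is correct and follows essentially the same route as the paper: exact integer shifts in $c$ and in $(x,y)$ along subsequences $\ep_n$ with the shift in $\ep_n\Z$ (combined with the existence of the full limit from Corollary \ref{cor:m-bar}), the parabolic scaling to reduce to $t=1$, convexity via the curve-surgery generalized subadditivity extending Lemma \ref{lem:sub} to displacements in different directions, and superlinearity directly from (L3). The only points to tighten are that the extension of the translation invariance from rational to general shifts does require the continuity of $\overline m$ (obtained, as in the paper, from Corollary \ref{cor:diff-m} and the continuity of $m^\ep$), and that in your generalized subadditivity you should split the initial value (or simply take $c=0$, which suffices for convexity) so that $c$ does not appear twice on the right-hand side.
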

\begin{proof}
By periodicity we have, for $k\in\mathbb Z^n$, $x,y\in \R^n$, $t>0$, and $c\in \R$,
\[
\begin{cases}
m(t,x,y,c+1)=m(t,x,y,c)+1,\\
m(t,x+k,y+k,c)=m(t,x,y,c).    
\end{cases}
\]
For each $c\in\mathbb R\backslash\{0\}$, we take a sequence $\ep_n\to 0$ such that $c/\ep_n\in\mathbb Z$, i.e., $\ep_n=|c|/N_n$ for $N_n\in\mathbb N$ and $N_n\to \infty$. Then,
\[\ep_n m\Big(\frac{t}{\ep_n},\frac{x}{\ep_n},\frac{y}{\ep_n},\frac{c}{\ep_n}\Big)=\ep_nm\Big(\frac{t}{\ep_n},\frac{x}{\ep_n},\frac{y}{\ep_n},0\Big)+c\to \overline{m}(t,x,y,0)+c.\]
Hence, we only need to consider $\overline{m}(t,x,y,0)$. 
Noting further that
\[\lim_{\ep\to 0}\ep m\Big(\frac{t}{\ep},\frac{x}{\ep},\frac{y}{\ep},0\Big)=t\lim_{\ep\to 0}\frac{\ep}{t} m\Big(\frac{t}{\ep},\frac{x}{\ep},\frac{y}{\ep},0\Big)=t\lim_{\tau\to 0}\tau m\Big(\frac{1}{\tau},\frac{xt^{-1}}{\tau},\frac{yt^{-1}}{\tau},0\Big),\]
we only need to consider the limit of
\[
\ep m\Big(\frac{1}{\ep},\frac{\tilde x}{\ep},\frac{\tilde y}{\ep},0\Big)
\quad \text{for each} \ 
\tilde x,\tilde y\in\mathbb R^n. 
\]
If $\tilde y$ is rational, we can take a sequence $\ep_n\to 0$ such that $\tilde y/\ep_n\in\mathbb Z^n$, i.e., we take $\ep_n=|y_i|/N^n_i$ for $N^n_i\in\mathbb N$ and $N^n_i\to \infty$. We have
\[\ep_n m\Big(\frac{1}{\ep_n},\frac{\tilde x}{\ep_n},\frac{\tilde y}{\ep_n},0\Big)=\ep_n m\Big(\frac{1}{\ep_n},\frac{\tilde x-\tilde y}{\ep_n},0,0\Big),\]
which implies
\[\overline{m}(1,\tilde x,\tilde y,0)=\overline{m}(1,\tilde x-\tilde y,0,0),\]
where $\tilde y$ is rational. 
Using the continuity of $m^\ep$ and Corollary \ref{cor:diff-m}, i.e.,  $|m^\ep(t,x,y,c)-\overline{m}(t,x,y,c)|\leq C\ep$, we get the continuity of $\overline{m}$. Since rational vectors are dense in $\R^n$, we can extend the above equality to the whole space, which implies
\[
\overline{m}(1,\tilde x,\tilde y,0)=\overline{m}(1,\tilde x-\tilde y,0,0)=:\overline{L}(\tilde x-\tilde y)
\quad\text{for all} \ \tilde{x}, \tilde{y}\in\R^n. 
\]
Letting $\tilde x=x/t$ and $\tilde y=y/t$, we obtain
\[
\overline{m}(t,x,y,0)=
t\overline{m}(1,\tilde{x},\tilde{y},0)=
t\overline{L}\Big(\frac{x-y}{t}\Big).
\]

Now, we show the convexity of $\overline{L}$. 
Fix $x_1, x_2\in\R^n$, and take $M_0>0$ so that $|x_1|+|x_2|\le M_0$. 
By repeating the argument in the proof of Lemma \ref{lem:sub}, we can prove that there exists $C>0$ such that for all $\ep\in(0,1)$, $\lambda\in(0,1)$ and $x_1,x_2\in \mathbb R^n$, we have
\begin{align*}
&\ep m\Big(\frac{1}{\ep},\frac{\lambda x_1+(1-\lambda)x_2}{\ep},0,0\Big)
\\ &\leq \ep m\Big(\frac{\lambda}{\ep},\frac{\lambda x_1}{\ep},0,0\Big)+\ep m\Big(\frac{(1-\lambda)}{\ep},\frac{(1-\lambda)x_2}{\ep},0,0\Big)+C\ep.
\end{align*}
Sending $\ep\to 0$, we get
\begin{align*}
\overline{L}(\lambda x_1+(1-\lambda)x_2)&\leq\,
\overline{m}(\lambda,\lambda x_1,0,0)+\overline{m}(1-\lambda,(1-\lambda) x_2,0,0)\\
&=\, 
\lambda\overline{m}(1,x_1,0,0)+(1-\lambda)\overline{m}(1,x_2,0,0)
=\lambda \overline{L}(x_1)+(1-\lambda)\overline{L}(x_2).    
\end{align*}
Hence, $\overline L$ is convex.

We finally prove the superlinearity of $\overline{L}$. 
Because of the superlinearity of $L$, for all $A>0$, there is $K(A)\in\mathbb R$ such that $L(y,0,v)\ge A|v|-K(A)$ for all $y,v\in\R^n$.
Thus, for a minimizer $\eta$ of $m^\ep(1,x,y,0)$ for fixed $x,y\in\R^n$,
\begin{align*}
\ep m\Big(\frac{1}{\ep},\frac{x}{\ep},\frac{y}{\ep},0\Big)&= \ep \int_0^{1/\ep}L\Big(\eta(s),m\Big(s,\eta(s),\frac{y}{\ep},0\Big),\dot\eta(s)\Big)\, ds
\\ &\geq \ep \int_0^{1/\ep}A|\dot\eta(s)|\, ds-(K+K(A))\geq A|x-y|-(K+K(A)).
\end{align*}
Sending $\ep\to 0$, we conclude
\[
\overline{L}(x-y)\geq A|x-y|-(K+K(A)) 
\quad\text{for all} \ x,y\in\R^n.
\]
\end{proof}

Thanks to Lemma \ref{lem:form}, the limit defined in \eqref{defu} is of the form
\[u(x,t)=\min_{y\in\R^n,\,|x-y|\leq M_0 t}\bigg\{\varphi(y)+t\overline{L}\Big(\frac{x-y}{t}\Big)\bigg\}.\]
Moreover, we can define the effective Hamiltonian $\overline{H}:\R^n\to\R$ by the Legenedre transform of $\overline{L}$, 
i.e., $\overline{H}(p):=\sup_{v\in\R^n}\{p\cdot v-\overline{L}(v)\}$. 
It is now clear to see that $u$ is the unique solution to \eqref{e0}. 

\subsection{Existence of bounded continuous correctors}
Once again, we would emphasize that our argument to prove homogenization both quantitatively and qualitatively does not rely on the existence of any correctors or sub/super correctors. 
Here, interestingly, we establish the existence of bounded continuous correctors by using our main theorem, Theorem \ref{thm1}. 
It is our hope that correctors will find useful applications in the future.

\begin{lem}\label{lem:corr-w}
For any $p\in\R^n$ and $w_0\in\BUC(\R^n)\cap\Lip(\R^n)$, let $w$ be the unique solution to \eqref{Hp} in $\BUC(\R\times[0,\infty))$.
Then, there is a constant $C$ depending only on $H$, $p$ and $\|w_0\|_{L^\infty(\R^n)}$ such that 
\[
|w(y,\tau)+\overline{H}(p)\tau|\leq C
\quad \text{for\ all} \ y\in\R^n, \ \tau>0.  
\]
\end{lem}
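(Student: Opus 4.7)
The plan is to reduce the claim to the quantitative homogenization rate of Theorem \ref{thm1} by an $\ep$-rescaling tailored to the fixed target point. Given $(y_0,\tau_0)\in\R^n\times(0,\infty)$, set $\ep:=\min\{1/\tau_0,1/2\}\in(0,1)$, $x_0:=\ep y_0$ and $t_0:=\ep\tau_0\in(0,1]$, and define
\[
u^\ep(x,t):=p\cdot x+\ep w\Big(\tfrac{x}{\ep},\tfrac{t}{\ep}\Big).
\]
A direct calculation from \eqref{Hp} shows that $u^\ep$ is the unique solution of \eqref{e} in $\AL(\R^n\times[0,\infty))$ with initial datum $\varphi^\ep(x):=p\cdot x+\ep w_0(x/\ep)$, and $\|\varphi^\ep-\tilde\varphi\|_{L^\infty(\R^n)}\le\ep\|w_0\|_{L^\infty(\R^n)}$ where $\tilde\varphi(x):=p\cdot x$.

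Next, I would apply Theorem \ref{thm1} to the Cauchy problem \eqref{e} with the $\ep$-independent datum $\tilde\varphi\in\Lip(\R^n)$ (Lipschitz constant $|p|$). Its solutions $\tilde u^\ep$ converge with rate $C_1\ep$ to the Hopf--Lax solution $v(x,t):=p\cdot x-\overline H(p)t$ of $v_t+\overline H(Dv)=0$, $v|_{t=0}=p\cdot x$, where $C_1$ depends only on $H$ and $|p|$. Since $u^\ep(x_0,t_0)-v(x_0,t_0)=\ep\bigl(w(y_0,\tau_0)+\overline H(p)\tau_0\bigr)$, the lemma will follow once I prove $\|u^\ep-v\|_{L^\infty(\R^n\times[0,\infty))}\le C\ep$, which in turn reduces to estimating $\|u^\ep-\tilde u^\ep\|_{L^\infty(\R^n\times[0,\infty))}$.

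The hard part, and the reason the naive Gronwall bound \eqref{exp} fails (it would yield only the useless factor $e^{Kt_0/\ep}\|\varphi^\ep-\tilde\varphi\|_{L^\infty}$), is to establish the scale-uniform stability estimate
\[
|m^\ep(t,x,y,c_1)-m^\ep(t,x,y,c_2)|\le|c_1-c_2|+\ep \qquad\text{for all }t,x,y,c_1,c_2.
\]
I plan to obtain it by exploiting two structural properties of the fundamental solution inherited from the $r$-periodicity in (L2). First, $c\mapsto m(t,x,y,c)$ is nondecreasing: combining Lemma \ref{m<dotu} with attainment of equality along a minimizer gives the representation $m(t,x,y,c)=\inf_\gamma\xi_\gamma(t;c)$, where $\xi_\gamma$ solves \eqref{dotu}; since the right-hand side of \eqref{dotu} is Lipschitz in $\xi$, standard ODE comparison yields monotonicity. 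Second, $m(t,x,y,c+1)=m(t,x,y,c)+1$: indeed, by (L2), the shift $\phi(s):=m(s,\gamma(s),y,c+1)-1$ satisfies the same ODE \eqref{dotu} along $\gamma$ with the same initial value as $m(s,\gamma(s),y,c)$, so they coincide. Writing $c_2-c_1=n+r$ with $n\in\Z$ and $r\in[0,1)$ and combining the two properties sandwiches $m(t,x,y,c_2)$ between $m(t,x,y,c_1)+n$ and $m(t,x,y,c_1)+n+1$, which after the $\ep$-rescaling of Proposition \ref{prop:uep=m} gives the displayed inequality.

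With that in hand, I would use the representations $u^\ep(x,t)=\inf_{y\in\R^n}m^\ep(t,x,y,\varphi^\ep(y))$ and $\tilde u^\ep(x,t)=\inf_{y\in\R^n}m^\ep(t,x,y,\tilde\varphi(y))$ from Proposition \ref{prop:uep=m} together with the elementary bound $|\inf f-\inf g|\le\sup|f-g|$ to obtain
\[
\|u^\ep-\tilde u^\ep\|_{L^\infty(\R^n\times[0,\infty))}\le(\|w_0\|_{L^\infty(\R^n)}+1)\ep.
\]
Combining with the base rate gives $\|u^\ep-v\|_{L^\infty(\R^n\times[0,\infty))}\le C\ep$ with $C:=C_1+\|w_0\|_{L^\infty(\R^n)}+1$, and evaluating at $(x_0,t_0)$ then dividing by $\ep$ yields the conclusion with $C$ depending only on $H$, $p$, and $\|w_0\|_{L^\infty(\R^n)}$.
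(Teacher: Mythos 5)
Your argument is correct, and its first half is exactly the paper's: rescale $w$ into $u^\ep(x,t)=p\cdot x+\ep w(x/\ep,t/\ep)$, observe that it solves \eqref{e}, and invoke the global rate of Theorem \ref{thm1} together with the identification (via Lemma \ref{lem:form}) of the homogenized solution with linear data as $p\cdot x-\overline H(p)t$; dividing by $\ep$ gives the bound. Where you genuinely diverge is in handling a nonzero $w_0$. The paper reduces to the case $w_0=0$ in two lines: with $N\geq\|w_0\|_{L^\infty}$ and $w^0$ the solution of \eqref{Hp} with zero data, the functions $w^0\pm N$ are solutions with data $\pm N$, so the comparison principle gives $w^0-N\leq w\leq w^0+N$, and the zero-data estimate transfers at the cost of $N$. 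You instead prove a scale-uniform stability estimate for the fundamental solution in the $c$-variable, $|m(t,x,y,c_1)-m(t,x,y,c_2)|\leq|c_1-c_2|+1$, obtained from monotonicity of $c\mapsto m(t,x,y,c)$ (via the representation $m=\inf_\gamma\xi_\gamma$ coming from Lemma \ref{m<dotu} and dynamic programming along minimizers, plus ODE comparison for \eqref{dotu}) and the integer shift identity $m(t,x,y,c+1)=m(t,x,y,c)+1$ (which the paper records in the proof of Lemma \ref{lem:form}); feeding this into the representation of Proposition \ref{prop:uep=m} yields $\|u^\ep-\tilde u^\ep\|_{L^\infty}\leq(\|w_0\|_{L^\infty}+1)\ep$. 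Both routes give a constant depending only on $H$, $p$, $\|w_0\|_{L^\infty}$, as required. Your route is heavier than the paper's comparison trick (and your choice of $\ep$ depending on $(y_0,\tau_0)$ is unnecessary, since the rate in Theorem \ref{thm1} is uniform on $\R^n\times[0,\infty)$ for every fixed $\ep\in(0,1)$), but it buys something the paper's does not state: an ``almost non-expansiveness'' estimate $\|u_1(\cdot,t)-u_2(\cdot,t)\|_{L^\infty}\leq\|\varphi_1-\varphi_2\|_{L^\infty}+1$ for arbitrary Lipschitz data, exploiting the $u$-periodicity to bypass the $e^{Kt}$ loss in \eqref{exp}; the paper's $w^0\pm N$ argument is the special case of comparison against constant shifts. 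Do make sure, as the paper does, to justify that the rescaled $u^\ep$ is the solution covered by Proposition \ref{prop:uep=m}: since $w_0\in\Lip(\R^n)$, $w$ and hence $u^\ep$ are locally Lipschitz, so Lemma \ref{lem:CP} identifies $u^\ep$ with the value-function solution.
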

\begin{proof}
We first assume $w_0(y)=0$. 
For $(x,t)\in \R^n\times [0,\infty)$, let
\[u^\ep(x,t):=p\cdot x+\ep w\Big(\frac{x}{\ep},\frac{t}{\ep}\Big).\]
Then, we have
\begin{equation*}
\begin{cases}
u^\varepsilon_t+H(\frac{x}{\varepsilon},\frac{u^\varepsilon}{\varepsilon},Du^\varepsilon)=0
\quad &\text{for} \ x\in\R^n,\ t>0,
\\ 
u^\varepsilon(x,0)=p\cdot x
\quad &\text{for} \ x\in\R^n.  
\end{cases}
\end{equation*}
By Theorem \ref{thm1}, we have $\|u^\ep-u\|_{L^\infty(\R^n\times [0,\infty))}\leq C\ep$, where $u$ solves \eqref{e0} with $u(x,0)=p\cdot x$. It is clear to see that $u=p\cdot x-\overline{H}(p)t$. Let $y=x/\ep$ and $\tau=t/\ep$. We have
\[|\ep w(y,\tau)+\overline{H}(p)t|=|p\cdot x+\ep w(y,\tau)-(p\cdot x-\overline{H}(p)t)|=|(u^\ep-u)(x,t)|\leq C\ep,\]
which implies that
\[|w(y,\tau)+\overline{H}(p)\tau|\leq C.\]

Next, we consider general $w_0\in\BUC(\R^n)\cap\Lip(\R^n)$. Take $N\in\N$ such that 
\[\|w_0\|_{L^\infty(\R^n)}\leq N.\] 
Let  $w^0$ be the solution of \eqref{Hp} with $w(y,0)=0$. 
Similar to \eqref{eks}, we have 
\[
|w^0_\tau(y,\tau)|\leq \sup_{y\in\R^n,\, r\in\R}|H(y,r,0)|e^{K\tau} \quad \text{ for a.e. } (y,\tau)\in \R^n\times [0,\infty),
\]
which implies that $w^0$ is Lipschitz continuous on $\R^n\times [0,T]$ with a Lipschitz constant depending on $T$ for each $T>0$.
Then $w^0+N$ (resp., $w^0-N$) solves \eqref{Hp} with the initial data $N$ (resp., $-N$). Since $w^0$ is locally Lipschitz continuous, by the comparison principle, we have $w^0-N\leq w\leq w^0+N$, which implies
\[|w(y,\tau)+\overline{H}(p)\tau|\leq |w(y,\tau)-w^0(y,\tau)|+|w^0(y,\tau)+\overline{H}(p)\tau|\leq C+N.\]
This completes the proof.
\end{proof}

\begin{proof}[Proof of Theorem {\rm\ref{thm:qualitative}}]
    The first part follows directly from Lemma \ref{lem:form}.
    The second part follows from Lemma \ref{lem:corr-w}. 
    We also note that $v(y,\tau)=w(y,\tau)+\overline{H}(p)\tau$ solves \eqref{cell}.
    Further, $v$ is bounded and continuous on $\R^n\times [0,\infty)$.
\end{proof}

\subsection{Representation formulas of the effective Hamiltonian}\label{ssec:rep-formula}
We establish some representation formulas of the effective Hamiltonian $\overline{H}(p)$ defined by \eqref{def:eff-H} in this subsection. 

\medskip

 It is natural to ask whether the classical inf-sup formula for the effective Hamiltonian holds. 
 We set $p=0$ in \eqref{Hp}. 
 By \cite[Theorem 1.4]{NWY}, there are two finite constant $c_1,c_2$ with $c_1\le c_2$ depending on $H$ such that
 \[
 H(y,v,D_yv)=c\quad \text{ for } y \in\R^n
 \]
has $\mathbb Z^n$-periodic solutions if and only if $c\in[c_1,c_2]$. As pointed out in \cite{CLQY,NWY}, for 
\[
H(y,r,p)=\frac{p^2}{2}-\cos r \quad \text{ for } (y,r,p)\in \R^n\times \R \times \R^n,
\] 
we have $[c_1,c_2]=[-1,1]$, which is not a single point. 
Now, if $0\in[c_1,c_2]$, we know that $\overline{H}(0)=0$, and the correctors corresponding to \eqref{cell} are exactly solutions of 
\[
H(y,v,D_y v)=0,
\]
which is time-independent. 
It is clear that
\[\inf_{\substack{v\in C^\infty(\R^n\times (0,\infty))\\ \|v\|_{C^1}<\infty}}\sup_{(y,\tau)\in\R^n\times(0,\infty)}(v_\tau+H(y,v,D_yv))\leq \inf_{v\in C^\infty(\T^n)}\sup_{y\in\T^n}H(y,v,D_yv).\]
Since $[c_1,c_2]$ may not reduce to a point, the following situation may happen
\[c_1=\inf_{v\in C^\infty(\T^n)}\sup_{y\in\T^n}H(y,v,D_yv)<\overline{H}(0)=0,\]
where the first equality was proved in \cite{WY2}. 
Thus, any such inf-sup formula has to be properly adjusted correspondingly to our setting.

\begin{lem}\label{lem:rep-H-bar-1}
    Assume {\rm (H1)--(H5)}.
For $p\in \R^n$, the effective Hamiltonian $\overline{H}(p)$ has a representation formula of the form: 
    \begin{align*}
        \ol H(p) = \inf\bigg\{-\limsup_{\tau \to \infty} &\sup_{y\in \R^n}\frac{\phi(y,\tau)}{\tau} \mid  
        \\ & \phi \in \AL(\R^n\times [0,\infty)) \text{ is a subsolution to } \eqref{Hp}\ \text{with}\ \phi(y,0)<\infty \bigg\}.
    \end{align*}
\end{lem}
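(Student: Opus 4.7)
The plan is to prove the representation by two matching inequalities, using Lemma \ref{lem:corr-w} as the main tool in both directions. The only nontrivial technical issue will be a change of unknown that rewrites \eqref{Hp} in the form of \eqref{eq:ini-1}, so that the $\AL$-comparison principle Lemma \ref{lem:CP} becomes applicable.

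For the upper bound $\overline{H}(p)\ge \inf\{\cdots\}$, I would exhibit an explicit admissible competitor. Let $w$ be the unique $\BUC$-solution of \eqref{Hp} with initial data $w_0\equiv 0$; since $w_0\in\BUC(\R^n)\cap\Lip(\R^n)$, Lemma \ref{lem:corr-w} gives $|w(y,\tau)+\overline{H}(p)\tau|\le C$ on $\R^n\times[0,\infty)$. Consequently $w\in \AL(\R^n\times[0,\infty))$, $w$ is trivially a subsolution, and $w(y,0)=0<\infty$, so $w$ is an admissible candidate. The uniform two-sided bound forces
\[
-\limsup_{\tau\to\infty}\sup_{y\in\R^n}\frac{w(y,\tau)}{\tau}=\overline{H}(p),
\]
producing the upper bound on the infimum.

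For the lower bound $\overline{H}(p)\le \inf\{\cdots\}$, take any admissible $\phi\in\AL$. Reading the condition $\phi(y,0)<\infty$ as $M:=\sup_{y\in\R^n}\phi(y,0)<\infty$ (the only reading that keeps the quantity in the formula finite; otherwise linear growth of $\phi(\cdot,0)$ forces the supremum to be $+\infty$ for every $\tau$), I would dominate $\phi$ by the solution $w^M$ of \eqref{Hp} with the \emph{constant} initial datum $M$, which also lies in $\BUC(\R^n)\cap\Lip(\R^n)$ and so is controlled by Lemma \ref{lem:corr-w}: $w^M(y,\tau)\le -\overline{H}(p)\tau+C_M$ with $C_M=C_M(H,p,M)$. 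The key step is to justify $\phi\le w^M$. For this I would introduce $\Phi(y,\tau):=p\cdot y+\phi(y,\tau)$ and $W^M(y,\tau):=p\cdot y+w^M(y,\tau)$; a direct computation (using $\Phi_\tau=\phi_\tau$ and $D\Phi=p+D\phi$) shows that $\Phi$ is an $\AL$ subsolution and $W^M$ an $\AL$ solution of the reference equation
\[
v_\tau+H(y,v,D_y v)=0\quad\text{in }\R^n\times(0,\infty),
\]
which is exactly \eqref{eq:ini-1}. Their initial data satisfy $\Phi(\cdot,0)=p\cdot y+\phi(\cdot,0)\le p\cdot y+M=W^M(\cdot,0)$, so Lemma \ref{lem:CP} yields $\Phi\le W^M$, hence $\phi\le w^M$. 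Combining with the bound on $w^M$ and taking $\limsup$ as $\tau\to\infty$ gives
\[
-\limsup_{\tau\to\infty}\sup_{y\in\R^n}\frac{\phi(y,\tau)}{\tau}\ge \overline{H}(p),
\]
which is the desired lower bound.

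The main obstacle is not analytic but notational: making sure the change of unknown $v\mapsto p\cdot y+v$ preserves the $\AL$ class and the sub/solution properties, so that Lemma \ref{lem:CP} — which is stated for \eqref{eq:ini-1} rather than for the shifted equation in \eqref{Hp} — can be applied cleanly. The genuine analytic content is entirely packaged inside Lemma \ref{lem:corr-w}, which itself rests on the quantitative homogenization result Theorem \ref{thm1}.
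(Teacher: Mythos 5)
Your proof is correct and follows essentially the same route as the paper: the equality is attained by the solution with zero initial data via Lemma \ref{lem:corr-w}, and the lower bound comes from comparing an arbitrary admissible subsolution with a solution started from a constant (the paper shifts $\phi$ down by an integer and compares with $w^0$, while you compare directly with $w^M$; both hinge on the same comparison principle, which you additionally make explicit through the change of unknown $v\mapsto p\cdot y+v$ reducing \eqref{Hp} to \eqref{eq:ini-1}). Your reading of the condition $\phi(y,0)<\infty$ as a uniform bound on the initial data matches the paper's implicit interpretation.
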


\begin{proof}
    Let $\phi \in \AL(\R^n\times [0,\infty))$ be a subsolution to  \eqref{Hp} with $\phi(y,0)<\infty$.  We take $N\in\N$ large such that $\phi(y,0)-N\leq 0$ for all $y\in \R^n$. 
    Let $w^0$ be the solution to \eqref{Hp} with $w(y,0)=0$, which is locally Lipschitz continuous. Then, by the comparison principle,
    \[
    \phi(y,\tau)-N \leq w^0(y,\tau) \quad \text{ for } (y,\tau)\in \R^n\times [0,\infty).
    \]
    Hence,
    \[
    -\limsup_{\tau \to \infty} \sup_{y\in \R^n}\frac{\phi(y,\tau)}{\tau}=-\limsup_{\tau \to \infty} \sup_{y\in \R^n}\frac{\phi(y,\tau)-N}{\tau} \geq -\limsup_{\tau \to \infty} \sup_{y\in \R^n}\frac{w^0(y,\tau)}{\tau}= \ol H(p).
    \]
    On the other hand, by picking $\phi=w^0$, we get the equality.    
\end{proof}

We next give a different type of representation formula for $\ol H(p)$. 
For $\phi\in \BUC(\R^n\times [0,\infty))$, we define
\[
S_\phi = \left\{ c\in \R \mid \phi_\tau + H(y,p\cdot y+\phi-c\tau, D\phi) \leq c \text{ in } \R^n\times (0,\infty)\right\},
\]
where the above inequality holds in the viscosity sense. It is clear that $S_\phi \neq \emptyset$ if $\phi \in C^1(\R^n\times [0,\infty))$ with $\|\phi\|_{C^1(\R^n\times [0,\infty))} <+\infty$.
If $S_\phi=\emptyset$, we set $\inf S_\phi=+\infty$. 

\begin{prop}\label{prop:inf-sup}
     Assume {\rm (H1)--(H5)}. 
    Fix $p\in \R^n$.
    Then, we have the following inf-sup representation formula
    \[
    \ol H(p)= \inf_{\phi\in \BUC(\R^n\times [0,\infty))}  \inf S_\phi.
    \]
\end{prop}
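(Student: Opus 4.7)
The approach is to establish the two bounds $\overline{H}(p) \le \inf_\phi \inf S_\phi$ and $\inf_\phi \inf S_\phi \le \overline{H}(p)$ separately, using the representation formula in Lemma \ref{lem:rep-H-bar-1} for the former and exhibiting an explicit corrector (via Lemma \ref{lem:corr-w}) that realizes $\overline{H}(p)$ for the latter.

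For the lower bound, fix $\phi \in \BUC(\R^n \times [0,\infty))$ (interpreted as globally bounded and uniformly continuous, which is the natural setting for this inf-sup formula) and $c \in S_\phi$. Set $w(y,\tau) := \phi(y,\tau) - c\tau$. A direct substitution using the defining inequality for $c \in S_\phi$ shows that $w$ is a viscosity subsolution of \eqref{Hp}, i.e.\ $w_\tau + H(y, p\cdot y + w, p + Dw) \le 0$. Moreover, $w \in \AL(\R^n\times [0,\infty))$ and $w(\cdot,0) = \phi(\cdot,0)$ is bounded. Applying Lemma \ref{lem:rep-H-bar-1},
\[
\overline{H}(p) \;\le\; -\limsup_{\tau\to\infty}\sup_{y\in\R^n}\frac{w(y,\tau)}{\tau} \;=\; c \;-\; \limsup_{\tau\to\infty}\sup_{y\in\R^n}\frac{\phi(y,\tau)}{\tau} \;=\; c,
\]
where the last equality uses the global boundedness of $\phi$. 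Hence $c \ge \overline{H}(p)$, and so $\inf S_\phi \ge \overline{H}(p)$ for every admissible $\phi$.

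For the upper bound, I would exhibit a $\phi$ that attains $\overline{H}(p)$. Let $w$ be the unique solution of \eqref{Hp} with initial datum $w_0 \equiv 0$, and set $\phi(y,\tau) := w(y,\tau) + \overline{H}(p)\tau$. By Lemma \ref{lem:corr-w}, $\phi$ is uniformly bounded on $\R^n \times [0,\infty)$ by a constant depending only on $H$ and $p$; combining this with the local Lipschitz regularity of $w$ inherited from the bounded Lipschitz initial datum yields $\phi \in \BUC(\R^n \times [0,\infty))$. Substituting $\phi = w + \overline{H}(p)\tau$ into the equation for $w$, one checks directly that $\phi$ is a viscosity solution of the cell problem \eqref{cell},
\[
\phi_\tau + H\bigl(y,\, p\cdot y + \phi - \overline{H}(p)\tau,\, p + D\phi\bigr) = \overline{H}(p),
\]
so that $\overline{H}(p) \in S_\phi$, giving $\inf_\phi \inf S_\phi \le \inf S_\phi \le \overline{H}(p)$.

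The main technical obstacle is aligning the gradient slot in the definition of $S_\phi$ with that in the corrector equation \eqref{Hp}, so that $w = \phi - c\tau$ is genuinely a viscosity subsolution of \eqref{Hp}; this is a direct verification once the definitions are interpreted consistently. A secondary subtlety is confirming that the exhibited upper-bound candidate $\phi$ lies in $\BUC(\R^n \times [0,\infty))$ in the sense intended by the proposition, which combines the global boundedness provided by Lemma \ref{lem:corr-w} with the local uniform continuity of $w$.
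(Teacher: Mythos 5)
Your proposal is correct and takes essentially the same route as the paper: the upper bound is identical (the corrector $v=w^0+\ol H(p)\tau$ furnished by Lemma \ref{lem:corr-w} is globally bounded and solves the cell problem, so $\ol H(p)\in S_v$), while your lower bound, obtained by feeding $w=\phi-c\tau$ into Lemma \ref{lem:rep-H-bar-1}, is just a repackaging of the paper's direct comparison of $\phi-c\tau-m$ with $w^0$, since that comparison is precisely what the proof of Lemma \ref{lem:rep-H-bar-1} consists of. Your consistent reading of the gradient slot in $S_\phi$ (as $p+D\phi$) and of the class $\BUC(\R^n\times[0,\infty))$ as containing globally bounded functions matches what the paper's own argument implicitly uses.
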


\begin{proof}
    Fix $\phi\in \BUC(\R^n\times [0,\infty))$. If $S_\phi\neq \emptyset$,
    we pick $c\in S_\phi$ and $m\in \N$ such that
    \[
    \phi(y,0) \leq m \quad \text{ for } y\in \R^n.
    \]
    Letting $\psi(y,\tau)=\phi(y,\tau)-c\tau -m$ for $(y,\tau)\in \R^n\times [0,\infty)$, 
    we see that $\psi$ is a subsolution to \eqref{Hp} with $\psi(y,0)\leq 0$. By the comparison principle, we obtain 
    \[
    \psi(y,\tau)=\phi(y,\tau)-c\tau -m \leq w^0(y,\tau) \quad \text{ for } (y,\tau)\in \R^n\times [0,\infty),
    \]
    where $w^0$ is the solution of \eqref{Hp} with $w(y,0)=0$. Then, by Lemma \ref{lem:corr-w}, 
    \[
    (\ol H(p)-c)\tau \leq (w^0(y,\tau)+\ol H(p)\tau) + m -\phi(y,\tau) \leq C,
    \]
    for some $C>0$ indepedent of $\tau$. This implies $\ol H(p) \leq c$. For $S_\phi=\emptyset$, it is clear that $\ol{H}(p)\leq \inf S_\phi=+\infty$. 
    Thus,
    \[
    \ol H(p)\leq \inf_{\phi\in \BUC(\R^n\times [0,\infty))} \inf S_\phi.
    \]

    Next, we prove the reverse inequality.
    Recall that $v(y,\tau)=w(y,\tau)+\overline{H}(p)\tau$ solves \eqref{cell}.
    Further, $v\in \BUC(\R^n\times [0,\infty))$.
    Hence, $\ol H(p) \in S_v$. 
    \end{proof}
    
\subsection{Further discussions}\label{further}

We first explain how our work relates to a conjecture in \cite{DG} (see also \cite{KS,P}).

\medskip

\noindent {\bf De Giorgi's conjecture.} {\it Let $F\in\Lip(\R^n\times\R,\R^n)$ be a $\mathbb Z^{n+1}$-periodic vector field, and $\varphi\in \Lip_{\rm loc}(\R^n)$. For every $\ep>0$, let $u^\ep\in \Lip_{\rm loc}(\R^n\times[0,\infty))$ be the solution of
\begin{equation*}
\begin{cases}
u^\ep_t+F(\frac{x}{\ep},\frac{t}{\ep})\cdot Du^\ep=0\quad &\text{for}\ x\in\R^n,\ t>0,
\\ u^\ep(x,0)=\varphi(x)\quad &\text{for}\ x\in\R^n.
\end{cases}
\end{equation*}
Then there exists $\overline{u}\in \Lip_{\rm loc}(\R^n\times[0,\infty))$ such that
\[\lim_{\ep\to 0}u^\ep=\overline u\]
in the weak topology of $L^p_{\rm loc}(\R^n\times[0,\infty))$, and there is a probability measure $\mu$ depending only on $F$ such that, for every $\varphi$, it follows that
\[\overline{u}(x,t)=\int_{\R^n}\varphi(x-\xi t)\, d\mu(\xi).\]
Here, for each $t>0$, if we get the convergence of the solution $y^\ep(s)$ of the associated ODE
\begin{equation*}
\begin{cases}
\dot y^\ep(s)=F\left(\frac{y^\ep}{\ep},\frac{s}{\ep}\right)\quad \text{for}\ s\in[0,t],
\\ y^\ep(t)=x,
\end{cases}
\end{equation*}
as $\ep\to 0$, and the limit has a constant velocity $\xi\in\R^n$, by the characteristic method, we can get the strong convergence of $u^\ep$.}

\medskip

Now we assume $n=1$. 
A qualitative convergence result was previously established in \cite{MMT} (see also \cite{P2}).
Although the strong convergence and convergence rate $O(\ep)$ of the solution to \eqref{transeq} are already known (see, e.g., \cite{IM2, MMT}), we present here an alternative proof as an application of our main theorems, Theorems \ref{thm1}, \ref{thm:qualitative}.

\begin{lem}
Let $F\in \Lip(\R)$ be a $\Z$-periodic function. By the classical theory of ODEs, for each $a\in \R$, we have the unique solution $y^\ep(t;a)$ of
\begin{equation}\label{ode}
\begin{cases}
\dot y^\ep(t)=F\left(\frac{y^\varepsilon(t)}{\varepsilon}\right)
\quad \text{for}\ t>0,
\\ y^\varepsilon(0)=a.
\end{cases}
\end{equation}
Then the limit of $y^\ep(t;a)$ is an affine function, and there exist $\xi\in\R$ and $C>0$ depending only on $F$ such that 
\begin{equation}\label{oderate}
|y^\ep(t;a)-(a+\xi t)|\leq C\ep \quad \text{ for } t>0.
\end{equation}
\end{lem}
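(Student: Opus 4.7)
The plan is to exploit the autonomous, scalar, $\Z$-periodic structure of the ODE by rescaling and separating variables. Setting $z(s):=y^\ep(\ep s;a)/\ep$, the ODE \eqref{ode} becomes $\dot z(s)=F(z(s))$ with $z(0)=a/\ep$, so I can drop the $\ep$-dependence entirely. It then suffices to prove that for $\dot z=F(z)$ with arbitrary initial datum $b\in\R$, there exists $\xi\in\R$ depending only on $F$ such that
\[
|z(s)-b-\xi s|\le 1 \qquad \text{for all } s\ge 0.
\]
Undoing the rescaling with $b=a/\ep$ and $s=t/\ep$ then yields $|y^\ep(t;a)-(a+\xi t)|\le\ep$, which is exactly \eqref{oderate} with $C=1$.

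I would define $\xi$ by a case split. If $F$ has no zero, then by continuity $F$ has a definite sign; up to replacing $F$ by $-F$ (and $\xi$ by $-\xi$) assume $F>0$ everywhere, so $1/F$ is bounded, and set $T:=\int_0^1 du/F(u)>0$ and $\xi:=1/T$. In this case the solution $z$ is strictly monotone increasing, and $\Z$-periodicity of $1/F$ implies that the time required to move from any value $u$ to $u+1$ equals $T$; hence the unique time $\tau_k$ with $z(\tau_k)=b+k$ is $\tau_k=kT$, and monotonicity on $[\tau_k,\tau_{k+1}]$ gives $z(s)\in[b+k,b+k+1]$ while $\xi s\in[k,k+1]$, so $|z(s)-b-\xi s|\le 1$.

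If instead $F$ has a zero, set $\xi:=0$. By $\Z$-periodicity every open interval of length strictly greater than one contains a zero of $F$, and the Lipschitz hypothesis together with uniqueness of the ODE flow prevents the trajectory from crossing any zero; hence either $b$ is itself a zero (and $z\equiv b$) or $b$ lies in an open interval $(z_1^*,z_2^*)$ between consecutive zeros with $z_2^*-z_1^*\le 1$, forcing $|z(s)-b|\le 1$. I expect the only delicate point to be checking that a single constant $C$ works across both cases and that $\xi$ is independent of the datum $a$; both follow transparently, since $\xi$ is either $1/\int_0^1 du/F$ or $0$ and the bound $|z(s)-b-\xi s|\le 1$ holds uniformly in $b\in\R$.
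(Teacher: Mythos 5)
Your proof is correct, but it follows a genuinely different route from the paper's. The paper deduces the lemma from its main results: it takes $H(y,r,p)=|p|^2-F(r)$ and $\varphi\equiv a$ in \eqref{e}, notes that $u^\ep(x,t):=y^\ep(t)$ is then the unique solution of \eqref{e} (via Lemma \ref{lem:CP}), and applies Theorems \ref{thm1} and \ref{thm:qualitative} to get $|y^\ep(t)-(a-\overline{H}(0)t)|\le C\ep$, i.e.\ \eqref{oderate} with $\xi=-\overline{H}(0)$; the lemma appears there explicitly as an illustration that the quantitative homogenization theorems recover the one-dimensional De Giorgi-type result. You instead argue directly on the ODE: after rescaling $z(s)=y^\ep(\ep s)/\ep$ you perform a phase-line analysis, and both branches are sound---if $F$ never vanishes then $\min F>0$ by periodicity, separation of variables together with the $\Z$-periodicity of $1/F$ gives the exact crossing times $\tau_k=kT$ with $T=\int_0^1 du/F(u)$ and hence $|z(s)-b-s/T|\le 1$; if $F$ has a zero, the trajectory is trapped between two consecutive equilibria at distance at most $1$ (uniqueness for Lipschitz $F$ forbids reaching them), so $\xi=0$ and $|z(s)-b|\le 1$. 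One cosmetic point: the reduction ``replace $F$ by $-F$'' should be the spatial reflection $z\mapsto -z$, i.e.\ replacing $F(\cdot)$ by $-F(-\cdot)$, which is again positive, Lipschitz and $\Z$-periodic; with this the formula $\xi=\big(\int_0^1 du/F(u)\big)^{-1}$ is unchanged, so nothing is lost. As for what each approach buys: yours is elementary and self-contained, identifies $\xi$ explicitly (the harmonic mean of $F$, or $0$) and gives the clean constant $C=1$ uniformly in $a$ and $t>0$, but it is special to the scalar autonomous setting; the paper's proof is intentionally non-elementary, since its purpose is to exhibit the lemma as an application of Theorems \ref{thm1}--\ref{thm:qualitative} (identifying $\xi=-\overline{H}(0)$), so your argument, while valid, bypasses the machinery the lemma was included to showcase.
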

\begin{proof}
In \eqref{e}, we set $H(y,r,p)=|p|^2-F(r)$, and the initial function $\varphi\equiv a\in\R$. Let $u^\ep(x,t):=y^\ep(t)$ for all $x\in\R^n$ and $t\geq 0$, by Lemma \ref{lem:CP}, it is clear that $u^\ep$ is the unique solution of \eqref{e}. Let $\overline{H}$ be the effective Hamiltonian defined in \eqref{def:eff-H}. Similarly, let $\overline{u}(x,t)=a-\overline{H}(0)t$ for all $x\in\R^n$ and $t\geq 0$, we know that $\overline{u}$ is the unique solution of \eqref{e0}. By Theorems \ref{thm1} and \ref{thm:qualitative}, we get the conclusion \eqref{oderate} with $\xi=-\overline{H}(0)$.
\end{proof}

\begin{prop}
Let $F\in \Lip(\R)$ be a $\Z$-periodic function. Assume $\varphi\in\Lip(\R)$. Let $u^\ep$ be the solution of \begin{equation}\label{transeq}
\begin{cases}
u^\ep_t+F\left(\frac{x}{\ep}\right)\cdot Du^\ep=0\quad &\text{for}\ x\in\R,\ t>0,
\\ u^\ep(x,0)=\varphi(x)\quad &\text{for}\ x\in\R.
\end{cases}
\end{equation}
there exist $\xi \in\R$ and $C>0$ depending only on $F$ such that 
\[|u^\ep(x,t)-\varphi(x-\xi t)|\leq C\|D\varphi\|_{L^\infty(\R)}\ep \quad \text{ for\ all}\ x\in\R,\ t>0.\]
\end{prop}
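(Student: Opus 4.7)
The plan is to apply the method of characteristics, which reduces the transport equation \eqref{transeq} to the ODE \eqref{ode} already analyzed in the preceding lemma. Since $F\in\Lip(\R)$, for each fixed $\ep>0$ the map $z\mapsto F(z/\ep)$ is Lipschitz, so the flow $s\mapsto y^\ep(s;a)$ defined by \eqref{ode} exists globally, is unique, and depends continuously on $a$. Moreover, for Lipschitz initial data $\varphi$, the function $u^\ep$ characterized by $u^\ep(y^\ep(s;a),s)=\varphi(a)$ coincides with the unique viscosity solution of \eqref{transeq}; this is standard for linear transport equations with Lipschitz coefficients (one verifies the PDE at Lebesgue points of $Du^\ep$, or appeals to the comparison principle from Lemma \ref{lem:CP} via the bound $|u^\ep|\le\|\varphi\|_{L^\infty}$ when $\varphi$ is bounded, with a straightforward truncation/approximation when it is only Lipschitz).

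With this in hand, fix $(x,t)\in\R\times(0,\infty)$. Since the flow $a\mapsto y^\ep(t;a)$ is a homeomorphism of $\R$, there is a unique $a=a(x,t,\ep)\in\R$ with $y^\ep(t;a)=x$, and then $u^\ep(x,t)=\varphi(a)$. The previous lemma yields constants $\xi\in\R$ and $C>0$ depending only on $F$ such that
\[
|y^\ep(t;a)-(a+\xi t)|\le C\ep.
\]
Substituting $y^\ep(t;a)=x$ gives $|a-(x-\xi t)|\le C\ep$. Using the Lipschitz continuity of $\varphi$,
\[
|u^\ep(x,t)-\varphi(x-\xi t)|=|\varphi(a)-\varphi(x-\xi t)|\le \|D\varphi\|_{L^\infty(\R)}|a-(x-\xi t)|\le C\|D\varphi\|_{L^\infty(\R)}\ep,
\]
which is the desired estimate.

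There is no substantive analytic obstacle here, since the heavy lifting has already been done in the previous lemma (which applies the quantitative homogenization theorems, Theorems \ref{thm1} and \ref{thm:qualitative}, to the Hamiltonian $H(y,r,p)=|p|^2-F(r)$ with constant initial data). The only mildly delicate point is justifying that the characteristic representation $u^\ep(x,t)=\varphi(a)$ is indeed the viscosity solution of \eqref{transeq}; as noted above, this follows from the linearity of the equation together with standard uniqueness arguments, and so does not require a separate development.
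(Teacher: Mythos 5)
Your proof is correct and follows essentially the same route as the paper: the method of characteristics reduces everything to the ODE rate estimate of the preceding lemma, and the Lipschitz bound on $\varphi$ converts the $O(\ep)$ deviation of the characteristic into the desired $O(\|D\varphi\|_{L^\infty}\ep)$ bound. The only (cosmetic) difference is that you invert the flow via the homeomorphism $a\mapsto y^\ep(t;a)$ and apply the lemma to $F$ directly, whereas the paper runs the characteristic backward in time (i.e., applies the lemma with $-F$); both yield the same conclusion.
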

\begin{proof}
We first recall the characteristic method. Let $y^\ep(t;a)$ be the unique solution of \eqref{ode}. Then $a\mapsto y^\ep(t;a)$ is a Lipschitz continuous homeomorphism on $\R$ for each $t>0$. 
For each $x\in\R$ and $t>0$, let $x_0$ be the constant such that $y^\ep(t;x_0)=x$. It is quite standard to show that
\[
u^\ep(x,t)=\varphi(x_0).
\]
is the weak solution of \eqref{transeq}. Now for each $t>0$, we define $z^\ep(s)=y^\ep(t-s;x_0)$ for $s\in[0,t]$. Then $z^\ep(s)$ satisfies
\begin{equation*}
\begin{cases}
\dot z^\ep(s)=-F\left(\frac{z^\varepsilon(s)}{\varepsilon}\right)
\quad \text{for}\ s\in[0,t],
\\ z^\varepsilon(0)=x,\quad z^\ep(t)=x_0.
\end{cases}
\end{equation*}
By \eqref{oderate}, there exist $\xi\in\R$ and $C>0$ depending only on $F$ such that 
\[|u^\ep(x,t)-\varphi(x-\xi t)|=|\varphi(z^\ep(t))-\varphi(x-\xi t)|\leq C \|D\varphi\|_{L^\infty(\R)}\ep.\]
The proof is now complete.
\end{proof}

In higher-dimensional cases ($n\ge 2$), this problem becomes more intricate and may be related to the weakly coupled systems of Hamilton–Jacobi equations studied in \cite{MT}. The main difficulty is that, for $X(s)\in\R^n$ with $n\geq 2$, we do not have the comparison principle for ODEs of the form $\dot X(s)=F(X(s),s)$, for a.e. $s>0$.

\medskip

We would like to emphasize that the existence of $\overline{H}$ and $\overline{L}$ suggests a possible generalization of the Aubry--Mather theory. In \cite{M}, the author considered the smooth Tonelli Lagrangian
\[L_\omega(x,v)=L(x,v)-\omega_x(v),\quad (x,v)\in\mathbb T^n\times\R^n,\]
where $\omega$ is a closed 1-form, which can be regarded as a function of $(x,v)$ which is linear in $v$. Since $\omega$ is closed, the Euler-Lagrange flow generated by $L$ and $L_\omega$ coincide. We denote this flow by $\Phi_t$. The associated Hamiltonian is $H_\omega(x,p)=H(x,\omega+p)$. We can show that the critical value of $H_\omega$ depends only on the cohomology class $c$ of $\omega$. We minimize $A_c:=\int L_\omega\, d\mu$ over all $\Phi_t$-invariant probability measures $\mu$. The author proved that $A_c$ takes a minimum value, and denoted it by $-\alpha(c)$. It was further shown that $\alpha$ is convex and superlinear. The convex dual $\beta(h)$ of $\alpha(c)$ is the minimum value of $\int L\, d\mu$ among all $\Phi_t$-invariant probability measures $\mu$ having the rotation vector $h$. The functions $\alpha$ and $\beta$ are exactly the effective Hamiltonian and the effective Lagrangian of $H_\omega$, respectively.
Further, it was proved in \cite[Section 3]{M} that
\begin{equation}\label{beta}
    \frac{m^0(kt,kx,ky)}{kt}\to \beta\Big(\frac{x-y}{t}\Big)\quad \text{as}\quad k\to\infty,
\end{equation}
where $m^0$ is the minimal action function associated with $L$, see \eqref{m0}. 
In recent years, there have been some generalizations of the Aubry--Mather theory, see \cite{MiS,WWY3,JMT} and the references therein. 
In \cite{MiS}, the following minimizing problem was considered
\[
\min\int_{\T^n\times\R^n} \Big(L(x,v)-\lambda u_\lambda(x)\Big)\, d\mu,
\]
where $\lambda>0$, and $u_\lambda$ is the unique solution of
\[
\lambda u+H(x,Du)=0 \quad \text{for} \ x\in\T^n,
\]
and the infimum is taken over all probability measures on  $\T^n\times\R^n$ that are invariant under the flow of the discounted Euler–Lagrange equation. 
By Lemma \ref{lem:form}, we have
\[\lim_{k\to\infty}\frac{m(kt,kx,ky,0)}{kt}=\overline{L}\Big(\frac{x-y}{t}\Big),\]
which has the same form as \eqref{beta}. Thus, Theorem \ref{thm:qualitative} suggests us to consider the following problem
\[\min\int_{\T^n\times\T\times\R^n}L(x,u,v)\, d\mu,\]
where the infimum is taken over all probability measures $\mu$ on $\T^n\times\R\times\R^n$, which are invariant under the dual flow of the contact Hamiltonian flow generated by $H$, and have a given rotation vector $h\in\R^n$ in $x$-space. 
This minimizing problem seems to be a more natural generalization of the Aubry--Mather theory in our setting.

\section{Proofs of Theorems \ref{thm:holder}--\ref{thm:corrector-Holder}}\label{sec:Holder}

In this section, we assume (H1)--(H6). 
Then, the Lagrangian associated with $H(y,r,p)$ satisfies (L1)--(L4) and
\begin{itemize}
\item[(L5)] there are constants $\alpha_0,\beta_1>0$, $m_2\geq m_1>1$ such that
\begin{equation*}
\begin{cases}
    |L(y,r,v)-L(0,0,v)|\leq \alpha_0\quad &\text{for\ all}\quad (y,r,v)\in\R^n\times\R\times\R^n,
    \\  L(0,0,v)\geq \frac{1}{\beta_1}|v|^{m_1}-\beta_1\quad &\text{for\ all}\quad v\in\R^n,
    \\  |D_v L(0,0,v)|\leq \beta_1(|v|^{m_2-1}+1)\quad &\text{for a.e.}\quad v\in\R^n,
\end{cases}
\end{equation*}
where $m_1=\frac{q_2}{q_2-1}$ and $m_2=\frac{q_1}{q_1-1}$.
\end{itemize}
The first two inequalities above are easy to prove. We explain the last inequality. In this section, we denote 
\[
L^0(v):=L(0,0,v) \quad \text{ for } v\in \R^n.
\]
For $v_1,v_2\in\R^n$, we take $p_1\in\R^n$ such that $L^0(v_1)=p_1\cdot v_1-H(0,0,p_1)$, then
\[L^0(v_1)-L^0(v_2)\leq p_1\cdot v_1-H(0,0,p_1)-(p_1\cdot v_2-H(0,0,p_1))\leq |p_1||v_1-v_2|.\]
Here, we know that $v_1\in D^-H(0,0,p_1)$, where $D^-\phi(z)$ denotes the subdifferential of $\phi$ at $z$. Then, in light of (H6),
\[|v_1|\geq \frac{1}{\beta_0}|p_1|^{q_1-1}-\beta_0,\]
which implies
\[
|p_1|\leq \beta_1\left(|v_1|^{\frac{1}{q_1-1}}+1\right)
\]
for some $\beta_1>0$.
Hence,
\[L^0(v_1)-L^0(v_2)\leq \beta_1\left(|v_1|^{\frac{1}{q_1-1}}+1\right)|v_1-v_2|=\beta_1\left(|v_1|^{\frac{q_1}{q_1-1}-1}+1\right)|v_1-v_2|.\]
By symmetry, 
\[
|L^0(v_1)-L^0(v_2)|\le \beta_1\left(|v_1|^{\frac{q_1}{q_1-1}-1}+1\right)|v_1-v_2|.
\]
Dividing both sides of the inequality by $|v_1-v_2|$  and sending $v_2\to v_1$, we obtain the result. 

\begin{lem}\label{lem:Holderx}
Assume {\rm (H1)--(H6)} and $\varphi\in \BUC(\R^n)$.
There is a constant $C>0$ depending on $H$ and $\|\varphi\|_{W^{1,\infty}(\R^n)}$ such that, for all $x,y\in\R^n$ and $t>1$,
\[|u^\ep(x,t)-u^\ep(y,t)|\leq C|x-y|^{\frac{m_1}{m_1+m_2-1}}.\]
\end{lem}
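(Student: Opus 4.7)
The plan is to combine the quantitative rate $\|u^\ep-u\|_{L^\infty(\R^n\times[0,\infty))}\le C\ep$ from Theorem~\ref{thm1} with a trajectory-surgery argument on the implicit variational formula of Lemma~\ref{lem:impli-rep-form}, splitting on the size of $|x-y|$ relative to $\ep$. If $|x-y|\ge\ep$, the Hopf--Lax representation $u(x,t)=\inf_{y'}\{\varphi(y')+t\overline L((x-y')/t)\}$ from Theorem~\ref{thm:qualitative} is $\|D\varphi\|_{L^\infty(\R^n)}$-Lipschitz in $x$, so
\[
|u^\ep(x,t)-u^\ep(y,t)|\le \|D\varphi\|_{L^\infty(\R^n)}|x-y|+2C\ep\le C|x-y|,
\]
which dominates $C|x-y|^{m_1/(m_1+m_2-1)}$ for $|x-y|\le 1$. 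The interesting regime is $|x-y|<\ep\le 1$.

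In this regime, let $\gamma^*\in\cC(x;t)$ be a minimizer of $u^\ep(x,t)$ given by Lemma~\ref{lem:impli-rep-form}, so $u^\ep(x,t)=\xi_{\gamma^*}(t)$ where $\xi_{\gamma^*}$ is the associated Herglotz solution along $\gamma^*$. For a parameter $h\in(0,\ep]$ to be chosen, define
\[
\tilde\gamma(s)=\begin{cases}\gamma^*(s),&s\in[0,t-h],\\ \gamma^*(s)+\tfrac{s-(t-h)}{h}(y-x),&s\in[t-h,t],\end{cases}
\]
so $\tilde\gamma(t)=y$ and $\tilde\gamma(0)=\gamma^*(0)$. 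The rescaled analogue of Lemma~\ref{m<dotu} gives $u^\ep(y,t)\le\xi_{\tilde\gamma}(t)$. Crucially, the first bound of (L5), $|L(y,r,v)-L^0(v)|\le\alpha_0$ uniformly in $(y,r)$, lets us estimate the difference \emph{without} a Gronwall factor in $\ep$, since $\xi_{\tilde\gamma}(t-h)=\xi_{\gamma^*}(t-h)$:
\[
u^\ep(y,t)-u^\ep(x,t)\le\int_{t-h}^t\bigl[L^0(\dot{\tilde\gamma})-L^0(\dot\gamma^*)\bigr]\,d\tau+2\alpha_0 h.
\]
Convexity of $L^0$ together with $|D_vL^0(v)|\le\beta_1(|v|^{m_2-1}+1)$ then yields
\[
u^\ep(y,t)-u^\ep(x,t)\le C\,\frac{|y-x|}{h}\int_{t-h}^t|\dot\gamma^*|^{m_2-1}\,d\tau+C\,\frac{|y-x|^{m_2}}{h^{m_2-1}}+C|y-x|+Ch.
\]

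To control the minimizer's velocity on the short interval $[t-h,t]$, combine the dynamic-programming identity $\int_{t-h}^t L\,d\tau=u^\ep(x,t)-u^\ep(\gamma^*(t-h),t-h)$ with Theorem~\ref{thm1} and the Lipschitz regularity of $u$ to get $\int_{t-h}^t L\,d\tau\le\|D\varphi\|_{L^\infty(\R^n)}(|x-\gamma^*(t-h)|+h)+2C\ep$. Using the lower bound $L\ge|\dot\gamma^*|^{m_1}/\beta_1-(\beta_1+\alpha_0)$ from (L5), Jensen's inequality, and Young's inequality gives the localized energy estimate $E:=\int_{t-h}^t|\dot\gamma^*|^{m_1}\,d\tau\le C(h+\ep)$, uniformly in $t>1$. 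Under the typical compatibility $m_2-1\le m_1$, H\"older's inequality then yields $\int_{t-h}^t|\dot\gamma^*|^{m_2-1}\,d\tau\le Ch^{(m_1-m_2+1)/m_1}\ep^{(m_2-1)/m_1}$ for $h\le\ep$. Inserting this in the surgery bound and balancing the first and last displayed terms gives the optimal choice $h_*=|y-x|^{m_1/(m_1+m_2-1)}\ep^{(m_2-1)/(m_1+m_2-1)}$; the constraint $h_*\le\ep$ is automatic from $|y-x|\le\ep$, and at this optimum the bound becomes $C|y-x|^{m_1/(m_1+m_2-1)}\ep^{(m_2-1)/(m_1+m_2-1)}\le C|y-x|^{m_1/(m_1+m_2-1)}$, while a direct exponent check shows that $|y-x|^{m_2}/h_*^{m_2-1}$ is also controlled by the same H\"older quantity. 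The main obstacle is the energy-localization step: without Theorem~\ref{thm1}, one only has $E\le Ct$, destroying uniformity in $t$; it is the sharp rate $\|u^\ep-u\|_{L^\infty}\le C\ep$ that forces $E=O(h+\ep)$ and yields a constant $C$ independent of $t>1$. A secondary difficulty is that when $m_2-1>m_1$, H\"older's inequality alone does not suffice, and one needs an extra argument such as a pointwise velocity bound via convexity to handle the integral of $|\dot\gamma^*|^{m_2-1}$.
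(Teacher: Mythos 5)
Your overall strategy --- using the global rate of Theorem \ref{thm1} to get a $t$-uniform, $\ep$-uniform control of the minimizer's action near the terminal time, then performing an endpoint curve surgery and optimizing the cut length $h\sim|x-y|^{m_1/(m_1+m_2-1)}$ --- is the same as the paper's, and several of your steps are correct: the Herglotz comparison $u^\ep(y,t)\le\xi_{\tilde\gamma}(t)$ with cancellation on $[0,t-h]$, and the localized energy bound $\int_{t-h}^t|\dot\gamma^*|^{m_1}\,d\tau\le C(h+\ep)$ obtained from the rate plus the space--time Lipschitz continuity of the limit $u$. However, there is a genuine gap exactly where you flag a ``secondary difficulty''. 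Because your surgery superposes the drift $(y-x)/h$ on the original tail, the gradient bound in (L5) forces you to estimate $\int_{t-h}^t|\dot\gamma^*|^{m_2-1}\,d\tau$, and H\"older converts the $L^{m_1}$ energy bound into this only when $m_2-1\le m_1$. That inequality is \emph{not} implied by (H6): with $m_1=\frac{q_2}{q_2-1}$ and $m_2=\frac{q_1}{q_1-1}$, it reads $q_1\ge 2-\frac{1}{q_2}$, whereas (H6) only assumes $q_2\ge q_1>1$ (take $q_1$ close to $1$). In the excluded regime your argument breaks down, and the proposed fix --- a pointwise bound on $|\dot\gamma^*|$ --- is not available here: $L$ is merely convex and continuous (no strict convexity or smoothness) and the variational principle is implicit, so minimizers come only with the integrated $L^{m_1}$ bound, not with pointwise velocity estimates.

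The paper avoids this with a different surgery: on $[t-\tau,t]$ it discards the tail of $\gamma^*$ entirely, replaces it by the straight segment from $\gamma^*(t-\tau)$ to $y$, and bounds the original tail's action from below by $\tau L^0\big(\frac{x-\gamma^*(t-\tau)}{\tau}\big)$ via Jensen's inequality. Thus the $|\cdot|^{m_2-1}$ factor from (L5) is only evaluated at \emph{averaged} velocities $\frac{x-\gamma^*(t-\tau)}{\tau}$ and $\frac{y-\gamma^*(t-\tau)}{\tau}$, which are controlled by $C\tau^{-1/m_1}$ through the cruder unit-interval energy estimate $\int_{t-1}^t|\dot\gamma^*|^{m_1}\,ds\le C$ coming from $|u^\ep(x,t)-u^\ep(\gamma^*(t-1),t-1)|\le C_1$ (itself a consequence of Theorem \ref{thm1} and comparison for the limit equation); no integral of $|\dot\gamma^*|^{m_2-1}$ ever appears, and the full range $q_2\ge q_1>1$ is covered with the same exponent. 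If you replace your drift-superposition step by this Jensen/straight-segment step, your case split $|x-y|\gtrless\ep$ and the sharpened localized energy bound become unnecessary (though they are correct and slightly finer). Two minor points: in your first regime the word ``dominates'' is reversed (the H\"older bound dominates $C|x-y|$ when $|x-y|\le1$), and the case $|x-y|>1$ should be disposed of separately via the uniform bound $|u^\ep(x,t)-u^\ep(y,t)|\le 2\|\varphi\|_{L^\infty(\R^n)}+2C\ep$.
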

\begin{proof}
Fix $x,y\in\R^n$ with $x\not=y$. 
We first show the boundedness of $|u^\ep(x,t)-u^\ep(y,t-1)|$. Let $u$ be the solution of \eqref{e0}. By the comparison principle, we have
\[-\|\varphi\|_{L^\infty(\R^n)}-\ol{H}(0)t\leq u\leq \|\varphi\|_{L^\infty(\R^n)}-\ol{H}(0)t,\]
which implies
\[|u(x,t)-u(y,t-1)|\leq 2\|\varphi\|_{L^\infty(\R^n)}+|\ol{H}(0)|
\quad \text{for\ all}\quad x,y\in\R^n,\ t>1.\]
By Theorem \ref{thm1},
\begin{equation}\label{est:u-ep}
|u^\ep(x,t)-u^\ep(y,t-1)|\leq |u(x,t)-u(y,t-1)|+2C\ep\leq 2\|\varphi\|_{L^\infty(\R^n)}+|\ol{H}(0)|+2C=:C_1    
\end{equation}
for all $x,y\in\R^n$ and $\ep\in(0,1)$.
Note that the global convergence rate $O(\ep)$ in Theorem \ref{thm1} is crucial for the estimate \eqref{est:u-ep}, as we require the bound to hold for all $t>1$.

\smallskip
We denote $d:=|x-y|$. 
Let $\gamma$ be a minimizer of $u^\ep(x,t)$ with $\gam(t)=x$. 
For all $\tau\in(0,1]$, by \eqref{est:u-ep} and (L5), we have
\begin{align*}
C_1&\geq u^\ep(x,t)-u^\ep(\gamma(t-1),t-1)=\int_{t-1}^t L\Big(\frac{\gamma(s)}{\ep},\frac{u^\ep(\gamma(s),s)}{\ep},\dot\gamma(s)\Big)\, ds
\\ &\geq \int_{t-1}^tL^0(\dot\gamma(s))\, ds-\alpha_0
\geq \int_{t-1}^t\frac{1}{\beta_1}|\dot\gamma(s)|^{m_1}\, ds-(\alpha_0+\beta_1)
\\ &\geq \int_{t-\tau}^t\frac{1}{\beta_1}|\dot\gamma(s)|^{m_1}\, ds-(\alpha_0+\beta_1)\geq 
\frac{1}{\beta_1\tau^{m_1-1}}|x-\gamma(t-\tau)|^{m_1}-(\alpha_0+\beta_1),
\end{align*}
where for the last inequality, we used the Jensen inequality. Thus,
\begin{equation}\label{x-gm}
|x-\gamma(t-\tau)|\leq C_2\tau^{1-\frac{1}{m_1}},
\end{equation}
where $C_2:=((C_1+\alpha_0+\beta_1)\beta_1)^{\frac{1}{m_1}}$. 
Let $\alpha$ be the line segment connecting $\gam(t-\tau)$ to $y$ in time $\tau$, that is, define $\alpha:[0,\tau]\to \mathbb R^n$ as
\[\alpha(s):=\gamma(t-\tau)+\frac{s}{\tau}\Big(y-\gamma(t-\tau)\Big) \quad \text{ for } s\in [0,\tau]. 
\]
Then, 
\[\dot\alpha(s)=\frac{y-\gamma(t-\tau)}{\tau}  \quad \text{ for } s\in [0,\tau].\]
Define
\begin{equation*}
\eta(s):=
\begin{cases}
\gamma(s)\quad &\text{for} \ s\in[0,t-\tau],
\\ \alpha(s-t+\tau)\quad &\text{for} \ s\in(t-\tau,t].
\end{cases}
\end{equation*} 
We have
\begin{align*}
&u^\ep(y,t)-u^\ep(x,t)\\
\leq\ & \int_0^t L\Big(\frac{\eta(s)}{\ep},\frac{u^\ep(\eta(s),s)}{\ep},\dot\eta(s)\Big)\, ds-\int_0^t L\Big(\frac{\gamma(s)}{\ep},\frac{u^\ep(\gamma(s),s)}{\ep},\dot\gamma(s)\Big)\, ds
\\ 
=\ &\int_{0}^\tau L\Big(\frac{\alpha(s)}{\ep},\frac{u^\ep(\alpha(s),s+t-\tau)}{\ep},\dot\alpha(s)\Big)\, ds-\int_{t-\tau}^t L\Big(\frac{\gamma(s)}{\ep},\frac{u^\ep(\gamma(s),s)}{\ep},\dot\gamma(s)\Big)\, ds
\\ 
\leq\ & \int_{0}^\tau L^0(\dot\alpha(s))\, ds-\int_{t-\tau}^t L^0(\dot\gamma(s))\, ds+2\alpha_0\tau.
\end{align*}
Since $L^0(v)$ is convex, by the Jensen inequality,
\[L^0\Big(\frac{x-\gamma(t-\tau)}{\tau}\Big)=L^0\Big(\frac{1}{\tau}\int_{t-\tau}^t \dot\gamma(s)\, ds\Big)\leq \frac{1}{\tau} \int_{t-\tau}^t L^0(\dot\gamma(s))\, ds,\]
which implies
\[u^\ep(y,t)-u^\ep(x,t)\leq \bigg(L^0\Big(\frac{y-\gamma(t-\tau)}{\tau}\Big)-L^0\Big(\frac{x-\gamma(t-\tau)}{\tau}\Big)\bigg)\tau+2\alpha_0\tau.\]
Note that due to (L4), (L5) with \eqref{x-gm}, we have 
\begin{align*}
&L^0\Big(\frac{y-\gamma(t-\tau)}{\tau}\Big)-L^0\Big(\frac{x-\gamma(t-\tau)}{\tau}\Big)\\    
\le&\, 
\beta_1\left(
\left|\frac{y-\gamma(t-\tau)}{\tau}\right|^{m_2-1}
+
\left|\frac{x-\gamma(t-\tau)}{\tau}\right|^{m_2-1}
+1
\right)\cdot \frac{|x-y|}{\tau}\\
\le&\, 
\beta_1\left(
\left(
\frac{|x-y|}{\tau}+C_2\tau^{-\frac{1}{m_1}}\right)^{m_2-1}
+(C_2\tau^{-\frac{1}{m_1}})^{m_2-1}+1
\right)\cdot \frac{|x-y|}{\tau}. 
\end{align*}
Take $\tau=d^\alpha$ with $\alpha\in(0,1]$ to be chosen later. 
Then, 
\begin{equation}\label{ineq:alpha}
u^\ep(y,t)-u^\ep(x,t)
\leq \beta_1\Big((d^{1-\alpha}+C_2d^{-\frac{\alpha}{m_1}})^{m_2-1}+(C_2d^{-\frac{\alpha}{m_1}})^{m_2-1}+1\Big)d+2\alpha_0 d^\alpha.
\end{equation}

We find $\alpha\in(0,1]$ to optimize inequality \eqref{ineq:alpha}. 
We first notice that we should necessarily have 
\[
-\frac{\alpha(m_2-1)}{m_1}+1>0
\quad\iff\quad 
\alpha<\frac{m_1}{m_2-1}. 
\]
Also, since $\alpha\in(0,1]$, we always have $1-\alpha>-\frac{\alpha}{m_1}$, that is, $\alpha<\frac{m_1}{m_1-1}$. 
Thus, $d^{-\frac{\alpha(m_2-1)}{m_1}+1}$ is a leading term in \eqref{ineq:alpha}. 
Therefore, we only need to maximize 
\[
\min\left\{
-\frac{\alpha(m_2-1)}{m_1}+1, \alpha\right\}
\]
with respect to $\alpha\in(0,1]$, 
which is given by $\alpha=\frac{m_1}{m_1+m_2-1}$.  
Therefore, for all $x,y\in\R^n$ and $t>1$,
\[|u^\ep(x,t)-u^\ep(y,t)|\leq C|x-y|^{\frac{m_1}{m_1+m_2-1}}.\]
\end{proof}

\begin{lem}\label{lem:holdert}
Assume {\rm (H1)--(H6)} and $\varphi\in \BUC(\R^n)$.
There is a constant $C>0$ depending on $H$ and $\|\varphi\|_{W^{1,\infty}(\R^n)}$ such that, for all $x,y\in\R^n$, $\delta>0$ and $t>\max\{1,\delta\}$, 
\[|u^\ep(x,t)-u^\ep(x,t-\delta)|\leq C\delta^{\frac{m_1}{m_1+m_2}}.\]
\end{lem}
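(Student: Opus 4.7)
The plan mirrors Lemma \ref{lem:Holderx}, with the temporal increment $\delta$ playing the role of the spatial separation. Fix $x\in\R^n$, $t>1$, $\delta\in(0,t)$, and let $\gamma:[0,t]\to\R^n$ be a minimizer of $u^\ep(x,t)$ with $\gamma(t)=x$. The energy step from the proof of Lemma \ref{lem:Holderx} still delivers
\[
|x-\gamma(t-\tau)|\leq C_2\,\tau^{1-\frac{1}{m_1}}\qquad\text{for all } \tau\in(0,1].
\]
The direction $u^\ep(x,t)-u^\ep(x,t-\delta)\leq C\delta$ is immediate: insert the constant curve $\eta\equiv x$ on $[t-\delta,t]$ into the dynamic programming principle and use (L5). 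For $\delta\leq 1$ this already gives $C\delta\leq C\delta^{m_1/(m_1+m_2)}$.

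For the reverse direction $u^\ep(x,t-\delta)-u^\ep(x,t)$, I would choose a parameter $\tau\in(2\delta,1]$ and build a competitor curve $\eta:[0,t-\delta]\to\R^n$ with $\eta(t-\delta)=x$ that follows $\gamma$ on $[0,t-\tau]$ and then traverses a straight segment of duration $\tau-\delta$ from $\gamma(t-\tau)$ to $x$. Applying dynamic programming on both sides cancels the common piece up to time $t-\tau$. Replacing $L$ by $L^0$ on the two remaining pieces costs at most $\alpha_0(2\tau-\delta)$ by (L5), while Jensen's inequality gives $\int_{t-\tau}^t L^0(\dot\gamma)\,ds\geq \tau L^0(w/\tau)$ with $w:=x-\gamma(t-\tau)$. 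The whole estimate then reduces to bounding the perspective-type increment
\[
\mathcal D:=(\tau-\delta)\,L^0\Big(\frac{w}{\tau-\delta}\Big)-\tau\, L^0\Big(\frac{w}{\tau}\Big).
\]

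The algebraic split $\mathcal D=\tau\bigl[L^0(w/(\tau-\delta))-L^0(w/\tau)\bigr]-\delta L^0(w/(\tau-\delta))$ does the job: the bracket is controlled by the mean value theorem using the gradient bound $|DL^0(v)|\leq\beta_1(|v|^{m_2-1}+1)$ from (L5) and the elementary identity $|w/(\tau-\delta)-w/\tau|\leq 2|w|\delta/\tau^2$ (valid since $\tau\geq 2\delta$), while the second term is bounded by $\delta\beta_1$ using $L^0\geq -\beta_1$. Inserting $|w|\leq C_2\tau^{1-1/m_1}$ produces $\mathcal D\leq C\delta\,\tau^{-m_2/m_1}+C\delta$, and hence
\[
u^\ep(x,t-\delta)-u^\ep(x,t)\leq C\delta\,\tau^{-m_2/m_1}+C\tau+C\delta.
\]
The two leading terms balance at $\tau=c\,\delta^{m_1/(m_1+m_2)}$, which yields the stated exponent and satisfies $\tau\in(2\delta,1]$ for $\delta$ below an absolute threshold. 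When $\delta$ exceeds this threshold (in particular $\delta\geq 1$) or when $t-\delta\leq 1$, iterating the bounded-increment estimate $|u^\ep(x,t)-u^\ep(y,t-1)|\leq C_1$ from \eqref{est:u-ep} over unit subintervals absorbs the remaining cases into a suitably enlarged constant. The main obstacle is the perspective-function estimate: only this particular split produces the singular rate $\tau^{-m_2/m_1}$ that matches the Jensen-enforced energy bound on $\gamma$, and thereby yields the sharp exponent $m_1/(m_1+m_2)$.
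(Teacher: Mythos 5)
Your argument is essentially the paper's: the same competitor (follow the minimizer $\gamma$ of $u^\ep(x,t)$ up to time $t-\tau$, then a straight segment of duration $\tau-\delta$ ending at $x$), the same use of Jensen together with the energy bound $|x-\gamma(t-\tau)|\le C_2\tau^{1-1/m_1}$ inherited from the proof of Lemma \ref{lem:Holderx}, and the same perspective-type split: with $w:=x-\gamma(t-\tau)$ the paper writes $(\tau-\delta)\bigl[L^0(w/(\tau-\delta))-L^0(w/\tau)\bigr]-\delta L^0(w/\tau)$ rather than your $\tau\bigl[L^0(w/(\tau-\delta))-L^0(w/\tau)\bigr]-\delta L^0(w/(\tau-\delta))$, but both splits, combined with the (L5) gradient bound, give $C\delta\tau^{-m_2/m_1}+C\tau+C\delta$ and the same optimal choice $\tau\sim\delta^{m_1/(m_1+m_2)}$; the easy direction via the constant extension of the minimizer of $u^\ep(x,t-\delta)$ is identical as well.

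One caveat concerns your closing remark about $\delta$ beyond the small threshold: iterating the unit-gap bound \eqref{est:u-ep} over unit subintervals only yields a bound of order $C_1\delta$, which does not imply $C\delta^{m_1/(m_1+m_2)}$ when $\delta$ is large; in fact no such sublinear bound can hold in general for large $\delta$ (take $H=H(p)$ with $\min_{v}L^0(v)\neq 0$ and $\varphi\equiv 0$, so that $u^\ep(x,t)-u^\ep(x,t-\delta)=\delta\min_v L^0(v)$). This is not a defect of your main argument relative to the paper: the paper's own proof also only covers the regime where $\tau=\delta^{\alpha}>\delta$, i.e.\ $\delta<1$, and its stated two-sided bound there is $-C_4\delta^{m_1/(m_1+m_2)}\le u^\ep(x,t)-u^\ep(x,t-\delta)\le C_3\delta$. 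For $\delta$ in a fixed intermediate range such as $[\delta_0,1]$, the cleanest fix is not iteration but the same comparison-principle argument that produced \eqref{est:u-ep}, with gap $\delta$ in place of $1$, which bounds the increment by a constant and is then absorbed since $\delta^{m_1/(m_1+m_2)}\ge\delta_0^{m_1/(m_1+m_2)}$.
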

\begin{proof}
We first show
\[u(x,t)-u(x,t-\delta)\leq C_3\delta.\]
Let $\gamma_1:[0,t-\delta]\to\R^n$ be a minimizer of $u^\ep(x,t-\delta)$. Define
\begin{equation*}
\eta_1(s):=
\begin{cases}
\gamma_1(s)\quad &\text{for} \ s\in[0,t-\delta],
\\ x\quad &\text{for} \ s\in(t-\delta,t].
\end{cases}
\end{equation*}
We get
\begin{align*}
u^\ep(x,t)&\leq \varphi(\eta_1(0))+\int_0^tL\Big(\frac{\eta_1(s)}{\ep},\frac{u^\ep(\eta_1(s),s)}{\ep},\dot\eta_1(s)\Big)\, ds
\\ &=u^\ep(x,t-\delta)+\int_{t-\delta}^tL\Big(\frac{x}{\ep},\frac{u^\ep(x,s)}{\ep},0\Big)\, ds\leq u^\ep(x,t-\delta)+C_3\delta,
\end{align*}
where $C_3:=|L^0(0)|+\alpha_0$.

We prove the other direction. 
Let $\gamma:[0,t]\to\R^n$ be a minimizer of $u^\ep(x,t)$ with $\gam(t)=x$. 
Take $\tau>\delta$ to be chosen. Define $\alpha:[0,\tau-\delta]\to \mathbb R^n$ as
\[\alpha(s):=\gamma(t-\tau)+\frac{s}{\tau-\delta}\Big(x-\gamma(t-\tau)\Big) \quad \text{ for } s\in [0,\tau-\delta],\]
then
\[\dot\alpha(s)=\frac{x-\gamma(t-\tau)}{\tau-\delta} \quad \text{ for } s\in [0,\tau-\delta].\]
Define
\begin{equation*}
\eta_2(s):=
\begin{cases}
\gamma(s)\quad &\text{for} \ s\in[0,t-\tau],
\\ \alpha(s-t+\tau)\quad &\text{for} \ s\in(t-\tau,t-\delta].
\end{cases}
\end{equation*}
Here, $\alpha$ is a line segment connecting $\gamma(t-\tau)$ to $x$ in time $\tau-\delta$.
We get
\begin{align*}
&u^\ep(x,t-\delta)-u^\ep(x,t)
\\\leq\ & \int_0^{t-\delta} L\Big(\frac{\eta_2(s)}{\ep},\frac{u^\ep(\eta_2(s),s)}{\ep},\dot\eta_2(s)\Big)\, ds-\int_0^t L\Big(\frac{\gamma(s)}{\ep},\frac{u^\ep(\gamma(s),s)}{\ep},\dot\gamma(s)\Big)\, ds
\\ =\ &\int_{0}^{\tau-\delta} L\Big(\frac{\alpha(s)}{\ep},\frac{u^\ep(\alpha(s),s+t-\tau)}{\ep},\dot\alpha(s)\Big)\, ds-\int_{t-\tau}^t L\Big(\frac{\gamma(s)}{\ep},\frac{u^\ep(\gamma(s),s)}{\ep},\dot\gamma(s)\Big)\, ds
\\ \leq \ & \int_{0}^{\tau-\delta} L^0(\dot\alpha(s))\, ds-\int_{t-\tau}^t L^0(\dot\gamma(s))\, ds+2\alpha_0\tau.
\end{align*}
By the Jensen inequality and (L5), we then get
\begin{align*}
&u^\ep(x,t-\delta)-u^\ep(x,t)
\\ \leq \ & L^0\Big(\frac{x-\gamma(t-\tau)}{\tau-\delta}\Big)(\tau-\delta)-L^0\Big(\frac{x-\gamma(t-\tau)}{\tau}\Big)\tau+2\alpha_0\tau
\\ =\ &\bigg(L^0\Big(\frac{x-\gamma(t-\tau)}{\tau-\delta}\Big)-L^0\Big(\frac{x-\gamma(t-\tau)}{\tau}\Big)\bigg)(\tau-\delta)-L^0\Big(\frac{x-\gamma(t-\tau)}{\tau}\Big)\delta+2\alpha_0\tau
\\ \leq\ & \beta_1\Big(\Big|\frac{x-\gamma(t-\tau)}{\tau-\delta}\Big|^{m_2-1}+1\Big)\Big|\frac{x-\gamma(t-\tau)}{\tau-\delta}-\frac{x-\gamma(t-\tau)}{\tau}\Big|(\tau-\delta)-\min_{v\in\R^n}L^0(v)\delta+2\alpha_0\tau
\\ \leq  \ &\beta_1\Big(\Big|\frac{x-\gamma(t-\tau)}{\tau-\delta}\Big|^{m_2-1}+1\Big)\Big|\frac{x-\gamma(t-\tau)}{\tau}\Big|\delta+|\min_{v\in\R^n}L^0(v)|\delta+2\alpha_0\tau.
\end{align*}
We take $\tau=\delta^\alpha$ with $\alpha\in(0,1)$. Note that $\delta^\alpha$ is much larger than $\delta$ when $\delta$ is small. Then by \eqref{x-gm}, we get $|x-\gamma(t-\tau)|\leq C_2\delta^{\alpha(1-m_1^{-1})}$, and
\[
u^\ep(x,t-\delta)-u^\ep(x,t)
\leq C_4\Big(\delta^{1-\frac{\alpha m_2}{m_1}}+\delta^\alpha\Big),
\]
for some $C_4>0$ depending on $H$ and $\|\varphi\|_{W^{1,\infty}(\R^n)}$. It is easy to show that the optimal choice of $\alpha$ is $\frac{m_1}{m_1+m_2}$. Then we get
\[-C_4\delta^{\frac{m_1}{m_1+m_2}}\leq u^\ep(x,t)-u^\ep(x,t-\delta)
\leq C_3\delta.\]
The proof is now complete.
\end{proof}
We are now ready to give the proofs of Theorems \ref{thm:holder}--\ref{thm:corrector-Holder}.

\begin{proof}[Proof of Theorem {\rm\ref{thm:holder}}]
    By combining Lemmas \ref{lem:Holderx} and \ref{lem:holdert}, we get the desired conclusion.
\end{proof}

\begin{proof}[Proof of Theorem {\rm\ref{thm:corrector-Holder}}]
Let $w$ be the solution of \eqref{Hp}. 
By Theorem \ref{thm:qualitative}, we have
\[|(w(y_1,\tau)+\ol{H}(p)\tau)-(w(y_2,\tau-1)+\ol{H}(p)(\tau-1))|\leq 2C
\]
for $y_1, y_2\in\R^n$ and $\tau>1$, 
which implies
\begin{equation}\label{est:w}
|w(y_1,\tau)-w(y_2,\tau-1)|\leq 2C+|\ol{H}(p)|.
\end{equation}
We emphasize that the uniform estimate \eqref{est:w} is essentially due to the quantitative result of Theorem \ref{thm1}, which is important to obtain the H\"older estimate in the proof Lemma \ref{lem:Holderx}. 
Note that the Lagrangian associated with $H_p(y,r,q):=H(y,p\cdot y+r,p+q)$ is
\[L_p(y,r,v)=L(y,p\cdot y+r,v)-p\cdot v.\]
It is easy to check that $L_p$ satisfies (L5). 
By repeating the proofs of Lemmas \ref{lem:Holderx} and \ref{lem:holdert} for the Lagrangian $L_p$, we get the H\"older estimate for $w(y,\tau)$, 
\[
|w(y_1,\tau_1)-w(y_2,\tau_2)|\leq C\Big(|y_1-y_2|^{\frac{m_1}{m_1+m_2-1}}+|\tau_1-\tau_2|^{\frac{m_1}{m_1+m_2}}\Big),
\]
for all $y_1,y_2\in\R^n$, $\tau_1,\tau_2>1$, with the constant $C$ depending only on $H$, $p$, and $\|w_0\|_{L^{\infty}(\R^n)}$.
By repeating the proof of Lemma \ref{lem:u-optimal-control}, $w$ is Lipschitz on $\R^n\times [0,2]$.

Hence, the corrector
\[v(y,\tau):=w(y,\tau)+\ol{H}(p)\tau\]
is uniformly bounded and equi-H\"older continuous. 
Set
\[
v^k(y,\tau) = v(y,\tau+k) \quad \text{ for } (y,\tau) \in \R^n \times [-k+2,\infty).
\]
By using the Arzel\`a--Ascoli theorem and a diagonal argument, and by passing to a subsequence if needed, we imply that $\{v^k\}$ converges to $\bar v$ locally uniformly as $k\to\infty$.
Then, $\bar v$ is bounded and H\"older continuous with
\[
|\bar v(y_1,\tau_1)-\bar v(y_2,\tau_2)|\leq C\Big(|y_1-y_2|^{\frac{m_1}{m_1+m_2-1}}+|\tau_1-\tau_2|^{\frac{m_1}{m_1+m_2}}\Big)
\]
for all $y_1,y_2\in\R^n$, $\tau_1,\tau_2\in \R$.
Further, by the stability of viscosity solutions, $\bar v$ satisfies
\begin{equation*}
\bar v_\tau+H(y,p\cdot y+\bar v-\overline{H}(p)\tau,p+D_y \bar v)=\overline{H}(p)\quad \text{ for } (y,\tau) \in\R^n\times \R, 
\end{equation*}
which completes the proof.
\end{proof}

\begin{rem}
    We note that our H\"older regularity result in Theorem \ref{thm:holder} is also applicable for more general cases of the form: 
    \[
    u_t+H(x,t,u,Du)=0 \quad \text{ for } x\in \R^n,\ t>0.
    \]
    Here, we first assume that $H(x,t,u,p)$ is uniformly Lipschitz continuous in $u$, locally Lipschitz continuous in $t$, convex and superlinear in $p$. These structural conditions are imposed to ensure the well-posedness of the problem and the validity of the optimal control representation formula. In addition, similar to (H6), we further assume that there are constants $C>0$, $q_2\geq q_1>1$ such that
    \[
    \begin{cases}
        |H(x,t,u,p)-H(0,0,0,p)| \leq C \quad &\text{ for all $(x,t,u)\in\R^n\times\R\times\R$},\\
        H(0,0,0,p)\leq C(|p|^{q_2}+1)\quad &\text{ for all $p\in\R^n$},\\
        |D_pH(0,0,0,p)|\geq \frac{1}{C}|p|^{q_1-1}-C\quad &\text{ for all $p\in\R^n$},\\
        |u(x,t)-u(y,t-1)| \leq C \quad &\text{ for all $x,y\in\R^n$, and $t>1$.}
    \end{cases} 
    \]
\end{rem}

\appendix

\section{The comparison principle for \eqref{eq:ini-1}--\eqref{eq:ini-2}}\label{sec:conv}

\begin{lem} \label{lem:CP}
Assume {\rm (H1), (H2), (H5)}. 
Let $u,v \in \AL(\R^n\times[0,T])$ be an upper semicontinuous subsolution and a lower semi-continuous supersolution of \eqref{eq:ini-1}--\eqref{eq:ini-2} on $\R^n\times [0,T]$, respectively.
Assume further that either $u$ or $v$ is Lipschitz continuous. 
Then, 
\[
u\leq v\quad \text{ on $\R^n\times[0,T]$.}
\]
\end{lem}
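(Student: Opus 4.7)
The plan is to adapt the standard Ishii doubling-of-variables technique to handle the two salient features of the setting: (i) the non-compact spatial domain, on which $u, v \in \AL$ have at most linear growth; and (ii) the absence of any monotonicity of $H$ in $r$, replaced by the Lipschitz bound (H2). The crucial first step is the multiplicative reduction $\bar u := e^{-\lambda t}u$, $\bar v := e^{-\lambda t}v$ with $\lambda > K$, which converts (H2) into \emph{strict} monotonicity of the transformed Hamiltonian. A direct viscosity computation (test $u$ against $\phi = e^{\lambda t}\bar\phi$) shows that $\bar u, \bar v$ are viscosity sub- and supersolutions of $w_t + \bar H(x, t, w, Dw) = 0$, where
\[
\bar H(x, t, w, p) := \lambda w + e^{-\lambda t} H\bigl(x, e^{\lambda t} w, e^{\lambda t} p\bigr),
\]
and by (H2) one checks $\bar H(x, t, w_1, p) - \bar H(x, t, w_2, p) \ge (\lambda - K)(w_1 - w_2)$ for $w_1 > w_2$. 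Since $\bar u(\cdot, 0) = \bar v(\cdot, 0) = \varphi$ and $\bar u \le \bar v$ is equivalent to $u \le v$, it suffices to compare $\bar u$ and $\bar v$.

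Without loss of generality I assume $u$ is Lipschitz on $\R^n \times [0,T]$ with constant $L_0$ (so $\bar u$ is Lipschitz in $x$ with constant $\le L_0$); the case where $v$ is Lipschitz is symmetric. Suppose for contradiction $\bar M := \sup(\bar u - \bar v) > 0$. For $\epsilon, \eta > 0$ consider the penalized functional
\[
\Phi(x, y, t, s) := \bar u(x, t) - \bar v(y, s) - \frac{|x-y|^2}{2\epsilon} - \frac{(t-s)^2}{2\epsilon} - \eta\bigl(\chi(x) + \chi(y)\bigr),
\]
with $\chi(z) := \sqrt{1 + |z|^2}$. The term $\eta\chi$ dominates the linear growth of $\bar u, \bar v$, compactifying the maximizer $(\hat x, \hat y, \hat t, \hat s)$. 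For $\eta$ small, $\sup \Phi \ge \bar M/2$ for all sufficiently small $\epsilon$; standard penalty-method estimates give $|\hat x - \hat y|, |\hat t - \hat s| \to 0$ as $\epsilon \to 0$. The equality $\bar u(\cdot, 0) = \bar v(\cdot, 0) = \varphi$ together with $\varphi \in \Lip(\R^n)$ rules out $\hat t = 0$ or $\hat s = 0$, since then $\bar u(\hat x, 0) - \bar v(\hat y, 0) = \varphi(\hat x) - \varphi(\hat y) = O(|\hat x - \hat y|) \to 0$, contradicting $\Phi(\hat x, \hat y, \hat t, \hat s) \ge \bar M/2$.

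At the interior maximum, the viscosity sub- and supersolution inequalities for $\bar u, \bar v$ subtract to give
\[
\bar H(\hat x, \hat t, \bar u(\hat x, \hat t), p_1) - \bar H(\hat y, \hat s, \bar v(\hat y, \hat s), p_2) \le 0,
\]
with $p_1 = (\hat x - \hat y)/\epsilon + \eta D\chi(\hat x)$ and $p_2 = (\hat x - \hat y)/\epsilon - \eta D\chi(\hat y)$. This is exactly where the Lipschitz assumption is essential: it bounds $|p_1| \le L_0 + \eta$ and hence $|p_2| \le L_0 + 2\eta$, compensating for the absence of any coercivity of $\bar H$ in $p$ and allowing us to invoke continuity of $\bar H$ on a compact set. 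Decomposing via the monotonicity of $\bar H$ in $w$ yields
\[
(\lambda - K)\bigl(\bar u(\hat x, \hat t) - \bar v(\hat y, \hat s)\bigr) \le \bar H(\hat y, \hat s, \bar v(\hat y, \hat s), p_2) - \bar H(\hat x, \hat t, \bar v(\hat y, \hat s), p_1).
\]
Since $|\hat x - \hat y|, |p_1 - p_2|, |\hat t - \hat s| = o(1)$ and $\bar v(\hat y, \hat s)$ stays bounded on the penalized region, uniform continuity of $\bar H$ on compact sets forces the right-hand side $\to 0$ as $\epsilon \to 0$ then $\eta \to 0$. Passing to the limit yields $(\lambda - K)\bar M_\star \le 0$ for a cluster value $\bar M_\star \ge \bar M/2 > 0$, contradicting $\lambda > K$.

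The main obstacle is the correctness of Step 1: verifying that the multiplicative substitution preserves viscosity sub/supersolutions, with $\bar H$ inheriting enough regularity (continuity, controlled growth in $p$, and Lipschitz in $w$) for the doubling to run. This is a standard but delicate calculation, because $\bar H$ depends on $t$ non-trivially through the factors $e^{\pm\lambda t}$; however, since these factors are bounded on $[0, T]$, all relevant estimates remain uniform. Once this reduction is in place, the remaining argument is a routine monotone-Hamiltonian comparison.
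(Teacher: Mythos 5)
There is a genuine gap at the compactification step. You penalize with $\eta\bigl(\chi(x)+\chi(y)\bigr)$, $\chi(z)=\sqrt{1+|z|^2}$, and claim this ``dominates the linear growth of $\bar u,\bar v$, compactifying the maximizer'' while simultaneously sending $\eta\to 0$. These two requirements are incompatible: the only a priori information is $\bar u(x,t)-\bar v(y,s)\le L(1+|x|+|y|)$ for some fixed $L>0$ (and, before the comparison is proved, the difference could genuinely grow at a linear rate along the diagonal $x=y$, $t=s$). For $\eta<L$ the functional $\Phi$ need not be bounded above, let alone attain its supremum, because the quadratic term $|x-y|^2/(2\epsilon)$ controls only $x-y$, not $|x|+|y|$. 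So the maximizer $(\hat x,\hat y,\hat t,\hat s)$ on which the whole subsequent argument rests may not exist, and all the later estimates ($|\hat x-\hat y|\to 0$, the viscosity inequalities, the bound $|p_1|\le L_0+\eta$) have nothing to attach to. Conversely, if you keep $\eta$ large enough to dominate the growth, then $p_1-p_2=\eta\bigl(D\chi(\hat x)+D\chi(\hat y)\bigr)$ is of size $\eta$ and does not vanish, so the right-hand side of your monotonicity inequality no longer tends to zero and the contradiction with $\lambda>K$ disappears.

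This is exactly the difficulty the paper's proof is organized around, and it is resolved by a preliminary step that your proposal is missing: one first proves the intermediate growth bound $\tilde u(x,t)-\tilde v(y,t)\le \Theta(1+|x-y|)$. This is done by penalizing with a \emph{large} coefficient, $\Theta e^{\mu t}\sqrt{1+|x-y|^2}$, together with the Crandall--Ishii--Lions auxiliary functions $\beta_R$ (slope at infinity at least $3L$, but $|D\beta_R|\le C$ uniformly and $\beta_R\to 0$ pointwise); the Lipschitz hypothesis on $v$ (or $u$) is used there to show the penalization gradient $\bar p$ stays bounded \emph{independently of} $\Theta$, so the term $\mu\Theta$ on the left eventually beats the bounded right-hand side and gives a contradiction for $\Theta$ large. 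Only with this sublinear bound on $\tilde u-\tilde v$ in hand can one run the second, standard doubling with a small penalization $\frac{\alpha}{2}|x|^2$, where the bound from Step 1 forces $\alpha|\bar x|\to 0$ and the penalization gradients become negligible. Your exponential rescaling, the use of (H2) to gain monotonicity in $w$, and the use of the Lipschitz assumption to bound the doubled gradients are all in the spirit of the paper's argument, but without the Step-1 growth estimate (or some equivalent device) the single-pass doubling you propose does not go through in the $\AL$ class.
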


\begin{proof}
Our proof is similar to that of \cite[Proposition 1]{IM}, and we give it for completeness of the paper. 

Set $\tilde u=e^{-K t}u$ and $\tilde v=e^{-K t}v$. It is clear that $\tilde u$ and $\tilde v$ are respectively a subsolution and a supersolution of
 \[w_t+Kw+e^{-K t}H(x,e^{K t}w,e^{K t}Dw)=0.\]
It suffices to prove $\tilde u\leq \tilde v$.
We divide the proof into several steps.

\medskip

\noindent {\bf Step 1.} We show that there exists $\Theta>0$ such that
\[\tilde u(x,t)-\tilde v(y,t)\leq \Theta(1+|x-y|).\]
To obtain this, it suffices to obtain
\[\tilde u(x,t)-\tilde v(y,t)\leq \Theta\zeta(x-y),\]
where $\zeta(z)=\sqrt{1+|z|^2}$. Since $u$ and $v$ are at most of linear growth, there is $L>0$ depending on $T$ such that
\begin{equation}\label{L}
\tilde u(x,t)-\tilde v(y,s)\leq L(1+|x|+|y|)\leq L(1+2|x|+|y-x|).
\end{equation}
We consider a family of functions $\{\beta_R\}\subset C^2(\mathbb R^n)$ parameterized by $R\geq 1$ as introduced in \cite[Theorem 5.1]{CIL}
\begin{equation*}
\begin{cases}
\beta_R\geq 0,
\\ \liminf_{|x|\to \infty}\beta_R(x)/|x|\geq 3L,
\\ |D\beta_R|\leq C,
\\ \lim_{R\to \infty}\beta_R(x)=0\qquad \text{ for } x\in\R^n.
\end{cases}
\end{equation*}
Here, the constant $C>0$ is independent of $R$.
For $\mu>0$, we define
\[M_{\Theta}:=\sup_{t,s\in(0,T],\,x,y\in\mathbb R^n}\Big\{\tilde u(x,t)-\tilde v(y,t)-\Theta e^{\mu t}\zeta(x-y)-\beta_R(x)\Big\}.\]
and
\[M_{\Theta,\nu}:=\sup_{t,s\in(0,T],\,x,y\in\mathbb R^n}\Big\{\tilde u(x,t)-\tilde v(y,s)-\frac{(s-t)^2}{2\nu}-\Theta e^{\mu t}\zeta(x-y)-\beta_R(x)\Big\}.\]
We want to show that there is $\Theta>0$ such that $M_{\Theta}\leq 0$ for all $R\geq 1$. Then we can conclude by letting $R\to\infty$. We argue by contradiction. Assume that there is $\Theta_0>0$ large such that for all $\Theta\ge \Theta_0$, there is $R:=R(\Theta)\geq 1$ such that $M_{\Theta}>0$. Then we have $M_{\Theta,\nu}\geq M_\Theta>0$. In view of \eqref{L} and the construction of $\beta_R$, when $\Theta\geq 2L$, the supremum in $M_{\Theta,\nu}$ is attained at $(\bar t,\bar s,\bar x,\bar y)$ and  $(\bar t,\bar s,\bar x,\bar y)\to (\tilde t,\tilde t,\tilde x,\tilde y)$ as $\nu\to 0$. Moreover, $(\tilde t,\tilde x,\tilde y)$ is a point that realizes $M_\Theta$.

Assume $\tilde t=0$. 
As $M_{\Theta}>0$ and $\tilde x\neq \tilde y$, 
\[\varphi(\tilde x)-\varphi(\tilde y)-\Theta|\tilde x-\tilde y|\geq \tilde u(\tilde x,0)-\tilde v(\tilde y,0)-\Theta|\tilde x-\tilde y|>0.
\]
This leads to a contradiction if $\Theta$ is large enough since $\varphi$ is Lipschitz continuous. Thus, $\tilde t>0$. 
Hence, we have $\bar t,\bar s>0$ for all $\nu>0$ small enough. We have viscosity inequalities
\begin{align*}
\mu \Theta e^{\mu\bar t}\zeta(\bar x-\bar y)+\frac{\bar t-\bar s}{\nu}+K\tilde u(\bar x,\bar t)+e^{-K \bar t}H(\bar x,e^{K \bar t}\tilde u(\bar x,\bar t),e^{K \bar t}(\bar p+D\beta_R(\bar x)))\leq 0
\end{align*}
and
\begin{align*}
\frac{\bar t-\bar s}{\nu}+K \tilde v(\bar y,\bar s)+e^{-K\bar s}H(\bar y,e^{K \bar s}v(\bar y,\bar s),e^{K \bar s}\bar p)\geq 0,
\end{align*}
where $\bar p=\Theta e^{\mu \bar t}D\zeta(\bar x-\bar y)$. 
Taking the difference of the two inequalities yields
\begin{equation}\label{ux}
\begin{aligned}
\mu \Theta e^{\mu\bar t}\zeta(\bar x-\bar y)\leq &-K(\tilde u(\bar x,\bar t)-\tilde v(\bar y,\bar s))
\\ &-e^{-K \bar t}H(\bar x,e^{K \bar t}\tilde u(\bar x,\bar t),e^{K \bar t}(\bar p+D\beta_R(\bar x)))
\\ &+e^{-K \bar s}H(\bar y,e^{K \bar s}\tilde v(\bar y,\bar s),e^{K \bar s}\bar p).
\end{aligned}
\end{equation}
We deduce from $M_{\Theta,\nu}>0$ that $\tilde u(\bar x,\bar t)\geq \tilde v(\bar y,\bar s)$. Then
\begin{align*}
&-K(\tilde u(\bar x,\bar t)-\tilde v(\bar y,\bar s))
\\ &-e^{-K \bar t}H(\bar x,e^{K \bar t}\tilde u(\bar x,\bar t),e^{K \bar t}(\bar p+D\beta_R(\bar x)))+e^{-K \bar t}H(\bar x,e^{K \bar t}\tilde v(\bar y,\bar s),e^{K \bar t}(\bar p+D\beta_R(\bar x)))\leq 0.
\end{align*}
Without loss of generality, we assume that $\tilde v$ is Lipschitz continuous. We show that $\bar p$ is bounded. Taking $x=y=\bar x$, $t=\bar t$ and $s=\bar s$, we get
\[\Theta e^{\mu t}\zeta(\bar x-\bar y)\leq \tilde v(\bar x,\bar s)-\tilde v(\bar y,\bar s)+\Theta e^{\mu t}\leq \|D\tilde v\|_{L^\infty}|\bar x-\bar y|+\Theta e^{\mu t}.\]
We take $\Theta$ large, then we get
\[(\Theta e^{\mu t})^2(1+|\bar x-\bar y|^2)\leq (\|D\tilde v\|_{L^\infty}|\bar x-\bar y|+\Theta e^{\mu t})^2,\]
which implies
\[|\bar x-\bar y|\leq \frac{2\Theta e^{\mu t}\|D\tilde v\|_{L^\infty}}{(\Theta e^{\mu t})^2-\|D\tilde v\|_{L^\infty}^2}.\]
Calculating directly, we have
\[|\bar p|=\Theta e^{\mu \bar t}\frac{|\bar x-\bar y|}{\sqrt{1+|\bar x-\bar y|^2}}\leq \Theta e^{\mu \bar t}|\bar x-\bar y|\leq \frac{2\|D\tilde v\|_{L^\infty}}{1-(\frac{\|D\tilde v\|_{L^\infty}}{\Theta e^{\mu \bar t}})^2}\leq 4\|D\tilde v\|_{L^\infty},\]
where we choose $\Theta>0$ large enough such that $(\frac{\|D\tilde v\|_{L^\infty}}{\Theta e^{\mu \bar t}})^2\leq \frac{1}{2}$. Then \[-e^{-K \bar t}H(\bar x,e^{K \bar t}\tilde v(\bar y,\bar s),e^{K \bar t}(\bar p+D\beta_R(\bar x)))+e^{-K \bar s}H(\bar y,e^{K \bar s}\tilde v(\bar y,\bar s),e^{K \bar s}\bar p)\]
is bounded by a constant independent of $\Theta$.

We conclude that the right-hand side of \eqref{ux} is independent of $\Theta$, while the left-hand side satisfies $\mu \Theta e^{\mu\bar t}\zeta(\bar x-\bar y)\geq \mu \Theta$. Choosing $\Theta$ large enough we get a contradiction.

\medskip

\noindent {\bf Step 2.} The remaining proof is standard. It suffices to show that $\tilde u(x,t)-\eta t-\frac{\alpha}{2}|x|^2\leq \tilde v(x,t)$ for all $\eta>0$ and all small $\alpha>0$. We then conclude by letting $\eta,\alpha\to 0$ for each $(x,t)\in\R^n\times(0,T]$. We argue by contradiction. Assume
\begin{align*}
    M&:=\sup_{(x,t)\in\R^n\times(0,T]}\Big(\tilde u(x,t)-\eta t-\frac{\alpha}{2}|x|^2-\tilde v(x,t)\Big)
    \\ &=\tilde u(x_0,t_0)-\eta t_0-\frac{\alpha}{2}|x_0|^2-\tilde v(x_0,t_0)>0.
\end{align*}
Here, since $\tilde u$ and $\tilde v$ have at most linear growth, the above maximum can be achieved. Let $\nu,\kappa>0$. Define
\[M_{\alpha,\kappa,\nu}:=\sup_{t,s\in(0,T],\,x,y\in\mathbb R^n}\Big\{\tilde u(x,t)-\tilde v(y,s)-\frac{(s-t)^2}{2\nu}-\frac{|x-y|^2}{2\kappa}-\frac{\alpha}{2}|x|^2-\eta t\Big\}.\]
We have $M_{\alpha,\kappa,\nu}\geq M>0$. Since
\[\tilde u(x,t)-\tilde v(y,s)\leq L(1+2|x|+|y-x|),\]
we know that the supremum is attained at $(\bar t,\bar s,\bar x,\bar y)$. Also, we have
\[
\frac{(\bar s-\bar t)^2}{2\nu}\leq L(1+2|\bar x|+|\bar y-\bar x|)-\frac{|\bar x-\bar y|^2}{2\kappa}-\frac{\alpha}{2}|\bar x|^2
\leq L+\frac{\kappa L^2}{2}+\frac{2L^2}{\alpha},
\]
which implies that $|\bar t-\bar s|\to 0$ as $\nu\to 0$ for fixed $\alpha,\kappa$. Thus, $(\bar t,\bar s,\bar x,\bar y)$ is close to $(\tilde t,\tilde t,\tilde x,\tilde y)$ when $\nu$ is small. From the first step, since $\tilde u-\tilde v$ is upper semi-continuous, we have
\[\tilde u(\bar x,\bar t)-\tilde v(\bar y,\bar s)\leq \tilde u(\tilde x,\tilde t)-\tilde v(\tilde y,\tilde t)+1\leq \Theta(1+|\tilde x-\tilde y|)+1\leq \Theta(1+|\bar x-\bar y|)+2\]
for $\nu$ small enough. 
Then by taking a larger $\Theta$, we have 
\[\tilde u(\bar x,\bar t)-\tilde v(\bar y,\bar s)\leq \Theta(1+|\bar x-\bar y|).\]
From $M_{\alpha,\kappa,\nu}>0$, we have
\begin{align*}
\frac{\alpha}{2}|\bar x|^2&\leq \tilde u(\bar x,\bar t)-\tilde v(\bar y,\bar s)-\frac{|\bar x-\bar y|^2}{2\kappa}
\\ &\leq \Theta(1+|\bar x-\bar y|)-\frac{|\bar x-\bar y|^2}{2\kappa}\leq \Theta+\frac{\kappa \Theta^2}{2}.
\end{align*}
We conclude that $\alpha|\bar x|\to 0$ as $\alpha\to 0$ for small $\kappa$. Also,
\begin{equation*}
\frac{|\bar x-\bar y|^2}{2\kappa}\leq \Theta(1+|\bar x-\bar y|)\leq \Theta+\frac{|\bar x-\bar y|^2}{4\kappa}+\kappa \Theta^2,
\end{equation*}
which implies that $|\bar x-\bar y|^2/\kappa$ is bounded, then $\bar x$ and $\bar y$ have the same limit as $\kappa\to 0$. Moreover, letting $\nu,\kappa \to 0$ and using the fact that
\[M\leq \liminf\Big(\tilde u(\bar x,\bar t)-\tilde v(\bar y,\bar s)-\eta \bar t-\frac{\alpha}{2}|\bar x|^2\Big)\leq \limsup\Big(\tilde u(\bar x,\bar t)-\tilde v(\bar y,\bar s)-\eta\bar t-\frac{\alpha}{2}|\bar x|^2\Big)\leq M,\]
where we used $M\leq M_{\alpha,\kappa,\nu}$ in the first inequality, and we used the fact that $(\bar x,\bar t)$ and $(\bar y,\bar s)$ have the same limit as $\nu,\kappa \to 0$ in the last inequality. We get \[M=\lim_{\nu,\kappa\to 0} \Big(\tilde u(\bar x,\bar t)-\tilde v(\bar y,\bar s)-\eta \bar t-\frac{\alpha}{2}|\bar x|^2\Big).\] 
By
\[M:=\sup\Big(\tilde u-\tilde v-\eta t-\frac{\alpha}{2}|\bar x|^2\Big)\leq M_{\alpha,\kappa,\nu}\leq \tilde u(\bar x,\bar t)-\tilde v(\bar y,\bar s)-\eta t-\frac{\alpha}{2}|\bar x|^2,\]
we get $M_{\alpha,\kappa,\nu}\to M$ as $\nu,\kappa \to 0$, which implies that
\[\frac{(\bar s-\bar t)^2}{\nu}+\frac{|\bar x-\bar y|^2}{\kappa}\to 0.\]
Since $M>0$, the initial condition ensures that $t_0>0$, which implies that $\bar t,\bar s>0$. Writing both viscosity inequalities and subtracting them yields
\begin{equation}\label{eta<}
\begin{aligned}
\eta\leq &-K(\tilde u(\bar x,\bar t)-\tilde v(\bar y,\bar s))
\\ &-e^{-K\bar t}H(\bar x,e^{K \bar t}\tilde u(\bar x,\bar t),e^{K \bar t}(\bar p+\alpha \bar x))
+e^{-K \bar s}H(\bar y,e^{K \bar s}\tilde v(\bar y,\bar s),e^{K \bar s}\bar p),
\end{aligned}
\end{equation}
where $\bar p=(\bar x-\bar y)/\kappa$.

Without loss of generality, we assume that $\tilde v$ is Lipschitz continuous. We show that $\bar p$ is bounded. Taking $x=y=\bar x$, $t=\bar t$ and $s=\bar s$, we get
\[\tilde u(\bar x,\bar t)-\tilde v(\bar x,\bar s)\leq \tilde u(\bar x,\bar t)-\tilde v(\bar y,\bar s)-\frac{|\bar x-\bar y|^2}{2\kappa},\]
which implies
\[\frac{|\bar x-\bar y|^2}{2\kappa}\leq \tilde v(\bar x,\bar s)-\tilde v(\bar y,\bar s)\leq \|D\tilde v\|_{L^\infty}|\bar x-\bar y|.\]
Thus, $|\bar p|\leq 2\|D\tilde v\|_{L^\infty}$.

By $M_{\alpha,\kappa,\nu}>0$, we have $\tilde u(\bar x,\bar t)-\tilde v(\bar y,\bar s)>0$. Then
\begin{equation*}
\begin{aligned}
-K(\tilde u(\bar x,\bar t)-\tilde v(\bar y,\bar s))&-e^{-K\bar t}H(\bar x,e^{K \bar t}\tilde u(\bar x,\bar t),e^{K \bar t}(\bar p+\alpha \bar x))
\\ &+e^{-K\bar t}H(\bar x,e^{K \bar t}\tilde  v(\bar y,\bar s),e^{K \bar t}(\bar p+\alpha \bar x))
\leq 0.
\end{aligned}
\end{equation*}
Note that $\alpha|\bar x|\to 0$ as $\alpha\to 0$ and $\bar p$ is bounded. Since we assume (H1), by the continuity of $H$, we can take $\alpha$ small so that
\[
-e^{-K\bar t}H(\bar x,e^{K \bar t}\tilde  v(\bar y,\bar s),e^{K \bar t}(\bar p+\alpha \bar x))+e^{-K\bar t}H(\bar x,e^{K \bar t}\tilde  v(\bar y,\bar s),e^{K \bar t}\bar p)
\]
is arbitrarily small. 
For fixed $\alpha$, $\bar x$ is bounded. Then $\bar y$ is bounded, since $|\bar x-\bar y|\to 0$ as $\kappa\to 0$. Since $\tilde v(y,s)$ has at most linear growth in $y$, $\tilde v(\bar y,\bar s)$ is bounded. By the continuity of $H$, since $\bar p$ is bounded, for fixed $\alpha,\kappa$, we choose $\nu$ small enough so that
\[
-e^{-K\bar t}H(\bar x,e^{K \bar t}\tilde  v(\bar y,\bar s),e^{K \bar t}\bar p)+e^{-K\bar s}H(\bar x,e^{K \bar s}\tilde  v(\bar y,\bar s),e^{K \bar s}\bar p)\]
is arbitrarily small. 
By taking $\kappa$ small, we also have that
\[
-e^{-K\bar s}H(\bar x,e^{K \bar s}\tilde  v(\bar y,\bar s),e^{K \bar s}\bar p)+e^{-K\bar s}H(\bar y,e^{K \bar s}\tilde  v(\bar y,\bar s),e^{K \bar s}\bar p)\]
is arbitrarily small. Finally, by first taking $\nu\ll \kappa,\alpha$, and then letting $\kappa,\alpha$ be sufficiently small, we conclude that the right-hand side of \eqref{eta<} can be arbitrarily small. This leads to a contradiction.
\end{proof}

\section{Derivation of \eqref{e}}\label{sec:moti}

In this section, we give a derivation of convex Hamilton-Jacobi equations with $(u/\ep)$-periodic Hamiltonians. 
We repeat some arguments in \cite{IMR} with careful adaptation to our settings. 
Dislocations which we would describe in this paper are line defects in crystals. 
Consider the situation that there are $N$ dislocation lines with the same Burgers vector, and all are contained in a single slip plane. 

Since we are interested in the effective dynamics for a collection of dislocation lines, we introduce a small parameter $\ep>0$ to describe the distance scale of each dislocation line. 
We use the level-set approach to describe dislocation lines, and 
therefore consider an auxiliary smooth function $v:\R^n\times[0,\infty)\to\R$ so that the $j$-th dislocation for $j\in\{1,\ldots, N\}$ is described by 
\[
\{x\in\R^n\mid v(x,t)=\ep j\}.
\]
The dislocation lines are affected by several obstacles in real life such as inclusions, precipitates, and 
all these obstacles are described by a positive $\ep\Z^n$-periodic function 
$c(\frac{x}{\ep})$, 
which is called the \textit{external force}. 
Moreover, we consider an inherent natural force pushing outward, 
the so-called \textit{Peach-Koehler} force at the position $x\in\R^n$ created by the $j$-th dislocation. This force is described by 
\[
c_0\ast  \mathbf{1}_{\{v(\cdot,t)>j\ep\}}(x), 
\]
where we denote by $\mathbf{1}_A$ the characteristic function of the set $A\subset\R^n$, i.e., 
$\mathbf{1}_A(x)=1$ for $x\in A$, 
$\mathbf{1}_A(x)=0$ for $x\in \R^n\setminus A$, 
and $c_0$ is a given kernel. 
Here, following to \cite{IMR}, we consider the kernel $c_0$ given by 
\[
c_0:=J-\delta_0, 
\]
where $J:\R^n\to\R$ is a given nonnegative continuous function 
satisfying $\int_{\R^n}J(z)\, dz=1$, 
which is given from physical quantities, 
and $\delta_0$ is a Dirac mass where we formally set  
\begin{equation*}
(\delta_0\ast \textbf{1}_{\{v(\cdot,t)>j\ep\}})(x):=
\begin{cases}
1 & \quad \text{if}\ v(x,t)>j\ep,
\\ \frac{1}{2} & \quad\text{if}\ v(x,t)=j\ep,
\\ 0 & \quad \text{if}\ v(x,t)<j\ep.
\end{cases}
\end{equation*}

Now, let us assume that there are $N_1,N_2\in\N$ such that 
\[
\Big(-N_1-\frac{1}{2}\Big)\ep<v(x,t)<\Big(N_2+\frac{1}{2}\Big)\ep
\quad\text{for all} \ (x,t)\in\R^n\times[0,\infty). 
\]
Then, the Peach-Koehler force at the point $x$ on the dislocation represented by the index $k$ (i.e., $v=\ep k$) created by dislocations represented by $j$ for $j=-N_1,\dots,N_2$ is given by
\begin{align*}
\bigg((J-\delta_0)\ast
\sum_{j=-N_1}^{N_2}\textbf{1}_{\{v>j\ep\}}\bigg)(x)
=&\, 
J\ast\bigg(\Big(\Big\lceil \frac{v}{\ep}-\frac{v(x)}{\ep}\Big\rceil-\frac{1}{2}\Big)\bigg)(x)\\
=&\, 
-\frac{v}{\ep}+J\ast E\Big(\frac{v}{\ep}\Big),
\end{align*}
where we denote by $\lceil \cdot \rceil$ the ceiling function, 
that is, $\lceil x\rceil$ is the smallest integer greater than or equal to $x\in\R$, 
and we set  
\[
E(r):=
k+\frac{1}{2}\quad \text{if} \ r\in(k,k+1]\ \text{with}\ k\in\mathbb Z.
\]
Noting that our external force is an obstacle to push dislocation lines inwards, we obtain the level set equation of the form 
\begin{equation}\label{vt}
v_t=\left(-c\Big(\frac{x}{\ep}\Big)-\frac{v}{\ep}+J\ast E\Big(\frac{v}{\ep}\Big)\right)|Dv|.
\end{equation}

According to \cite{IMR}, we approximate $E$ by a smooth function $E_\delta$ for $\delta>0$ satisfying
\begin{equation*}
E_\delta(\cdot+1)=E_\delta(\cdot)+1,\quad 0<\delta\le E'_\delta\le \frac{1}{\delta},
\end{equation*}
where we denote by $E_\delta'$ the derivative of $E_\delta$, 
and
\[
E_\delta(j)=j,\quad \text{and}\quad E_\delta\Big(j+\frac{1}{2}\Big)=j+\frac{1}{2}\ \text{for}\ j\in\mathbb Z.
\]
Then, 
$u^\ep:=\ep E_\delta(\frac{v}{\ep})$ satisfies
\begin{equation}\label{ee}
u^\ep_t=\left(-c\left(\frac{x}{\ep}\right)-\frac{u^\ep}{\ep}+J\ast \frac{u^\ep}{\ep}\right)|Du^\ep|
+\left(\frac{u^\ep}{\ep}-E^{-1}_\delta\left(\frac{u^\ep}{\ep}\right)\right)|Du^\ep|. 
\end{equation}
Note that setting 
$f(r,p):=\big(r-E^{-1}_\delta(r)\big)|p|$, 
we can check that 
$r\mapsto f(r,p)$ 
is $1$-periodic in $r$, and Lipschitz continuous for fixed $\delta$ and $p\in\R^n$.
Equation \eqref{ee} is a nonlocal first-order Hamilton--Jacobi equation which was studied in \cite{IMR}. See also \cite{AHLM,FIM} for the study of \eqref{vt}. We also refer the reader to \cite{PS,PV} for homogenization of PDEs depending periodically on $u/\ep$ with an anisotropic L\'evy operator of order 1.

Now we focus on the homogeneous kernel $J$, i.e., 
$J\equiv 1$. 
Then, it turns out that \eqref{ee} becomes a local Hamilton--Jacobi equation which depends on $\frac{u}{\ep}$ of the form: 
\begin{equation}\label{eq:local-dislocation}
u^\ep_t+c\left(\frac{x}{\ep}\right)|Du^\ep|
-\left(\frac{u^\ep}{\ep}-E^{-1}_\delta\left(\frac{u^\ep}{\ep}\right)\right)|Du^\ep|=0.  
\end{equation} 
In general, this is a noncoercive and nonconvex Hamilton--Jacobi equation. 
By noting that $|r-E^{-1}_\delta(r)|\le \frac{1}{2}$, 
we add an assumption 
\[
c(y)>1/2 \quad\text{for} \ y\in\R^n
\]
to make \eqref{eq:local-dislocation} coercive and convex.
From the viewpoint of a physical model, the above assumption 
is reasonable since 
the external force is much stronger than the Peach-Koehler force in general. 
However, since \eqref{eq:local-dislocation} is not uniformly Lipschitz continuous in $u^\ep$, we do not have well-posedness. Therefore, in the same spirit as in \cite[Section 2.1]{IMR}, 
we modify \eqref{eq:local-dislocation} to satisfy our assumptions 
(H2)--(H4). 
Set 
\begin{equation*}
H(y,r,p):=
\begin{cases}
c(y)|p|-(r-E^{-1}_\delta(r))|p|
\quad \text{for}\ y\in\R^n,\ r\in\R,\ |p|\le R,
\\ 
\frac{|p|^2}{2}-\frac{R^2}{2}+c(y)R-(r-E^{-1}_\delta(r))R
\quad \text{for} \ y\in\R^n,\ r\in\R,\ |p|> R, 
\end{cases}
\end{equation*}
where $R>0$ is a given large constant. 
Then, if $c(y)$ is continuous and $\Z^n$-periodic, for suitable initial data, 
it is clear that (H1)--(H6) hold. 
With this modified Hamiltonian, we propose the Hamilton--Jacobi equation 
\[
u^\ep_t+H\Big(\frac{x}{\ep},\frac{u^\ep}{\ep},Du^\ep\Big)=0
\]
as a model equation for the dislocation dynamics.

\section*{Acknowledgements}
The work of HM was partially supported by the JSPS grants: KAKENHI
\#21H04431, \#22K03382, \#24K00531. 
The work of HVT is partially supported by NSF grant DMS-2348305.

\section*{Declarations}

\noindent {\bf Conflict of interest statement:} The authors state that there is no conflict of interest.

\medskip

\noindent {\bf Data availability statement:} Data sharing not applicable to this article as no datasets were generated or analyzed during the current study.

\end{document}